\pgfplotsset{compat=1.15}
\numberwithin{figure}{section}
\numberwithin{figure}{section}
\newtheorem{theorem}{Theorem}[section]
\newtheorem*{mtheorem}{Main Theorem}
\newtheorem{lemma}[theorem]{Lemma}
\newtheorem{proposition}[theorem]{Proposition}
\theoremstyle{definition}
\newtheorem{remark}[theorem]{Remark}
\numberwithin{equation}{section}
\newcommand{\R}{\mathbb{R}}
\newcommand{\N}{\mathbb{N}}
\newcommand{\F}{\mathcal {F}}
\renewcommand{\H}{\mathcal{H}}
\newcommand{\Si}{\mathbb{S}}
\newcommand{\beq}{\begin{equation}}
\newcommand{\eeq}{\end{equation}}
\newcommand{\eps}{\varepsilon}
\newcommand{\la}{\langle}
\newcommand{\ra}{\rangle}
\newcommand{\pa}{\partial}
\newcommand{\medint}{-\kern -,375cm\int}
\newcommand{\medintinrigo}{-\kern -,315cm\int}
\begin{document}

\title[Fractional mean curvature  for convex sets]{Convergence of the volume preserving fractional mean curvature flow  for convex sets}

\author{Vesa Julin}

\author{Domenico Angelo La Manna}

\keywords{}

\begin{abstract} 
We prove that   the volume preserving fractional mean curvature flow starting from a convex set does not develop singularities along the flow. By the recent result of Cesaroni-Novaga \cite{CN} this then implies that the flow converges to a ball exponentially fast. In the proof we show that the a priori estimates due to  Cinti-Sinestrari-Valdinoci \cite{CSV2} imply the $C^{1+\alpha}$-regularity of the flow and then provide a regularity argument which improves this to  $C^{2+\alpha}$-regularity of the flow. The regularity step from $C^{1+\alpha}$ into $C^{2+\alpha}$ does not rely on convexity and can be adopted to more general setting.    
\end{abstract}

\maketitle

\section{Introduction}

We say that a given set $E_0 \subset \R^{n+1}$ evolves under the volume preserving fractional mean curvature flow if there exists a flow $(E_t)_{t\in [0,T)}$ of sets starting from $E_0$, which satisfies the equation 
\begin{equation}
\label{eq:the-flow}
V_t = -(H_{E_t}^s - \bar H_{E_t}^s) \qquad \text{on } \, \partial E_t,
\end{equation}
where $V_t$ is the normal velocity, $H_E^s$ with $s \in (0,1)$ is the fractional mean curvature defined as 
\begin{equation}
\label{def:fracmean}
H_E^s(x) = \int_{\R^{n+1} \setminus E} \frac{dy}{|y-x|^{n+1+s}} - \int_{E} \frac{dy}{|y-x|^{n+1+s}} 
\end{equation}
and $\bar H_{E_t}^s$ is its integral average. The flow \eqref{eq:the-flow} bears many similarities to the classical volume preserving mean curvature flow, in particular,  it can be seen as   a gradient flow of the fractional perimeter. The flow \eqref{eq:the-flow} can also be seen as  a perturbation of the fractional mean curvature flow, with the difference that \eqref{eq:the-flow} preserves the volume and can thus be interpreted as the evolutionary counterpart of the fractional isoperimetric inequality.  

The study of  the fractional perimeter problem goes back to \cite{CRS}, where the authors prove so called $\eps$-regularity result, i.e.,  partial regularity of the perimeter minimizers. As with minimal surfaces, also in the fractional case the perimeter minimizers may have singularities in higher dimensions. The question of optimal regularity has been studied extensively and we merely refer to \cite{SV}, which is the best known result on the size of the singular set at the moment. We also refer to \cite{BFV}, where the authors provide a bootstrap argument  to obtain higher order regularity outside the singular set. The related isoperimetric problem, which states that  the ball minimizes the fractional perimeter under the volume constraint, follows from standard symmetrization argument and the sharp quantitative version is proven in  \cite{FFMMM}  (see also \cite{FMM}). Moreover, the fractional version of the Alexandrov theorem, which states that  the only regular set with constant fractional mean curvature is the ball, is proven independently in \cite{CFSW, CFMN}. This is important as it essentially  implies that, if the flow  \eqref{eq:the-flow} is well defined for all times, then it converges to the ball. 

In \cite{JL} we  prove the short time existence of the classical solution of \eqref{eq:the-flow} for $C^{1,1}$-regular initial sets. This result also applies to the fractional mean curvature flow, in which case the existence of the level set solution is proven in \cite{imb} and another weak solution is constructed in \cite{CMP}.  As usual with geometric flows,  the fractional mean curvature flow may develop singularities in finite time such as neckpinching \cite{CSV1} and fattening \cite{CDNV}. However, at the moment there is no systematic classification of the possible types of singularities in the fractional setting similar to the classical mean curvature flow \cite{MantegazzaBook}.  We are also not aware of a construction of a weak solution for \eqref{eq:the-flow}.  


A natural problem is whether we may avoid the formation of singularities, if we constraint ourselves to  special type of  initial sets.  In \cite{CN} the authors prove, that if the initial set is close enough to the ball, then the flow \eqref{eq:the-flow} remains smooth and converges exponentially fast to a translation of the  ball. Another special case is when the initial set is a subgraph of a Lipschitz function. Note that then the set is unbounded and therefore, instead of \eqref{eq:the-flow}, it is natural to study the standard fractional mean curvature flow. In this setting the authors   \cite{SaeV} prove  important a priori estimates, which essentially imply that if the flow starts from Lipschitz graph with bounded fractional curvature, these quantities are preserved along the flow.  This problem is studied further in \cite{CN0}, where the authors show that in the above setting the fractional mean curvature flow  does not develop singularities and converges to the flat surface exponentially fast. This is the fractional counterpart of the Ecker-Huisken result \cite{EH} for the mean curvature flow with the only difference that the initial set is assumed to have bounded fractional curvature. Due to the results \cite{MW, Ser} one might expect the result to hold without the curvature assumption, but to the best of our knowledge this is not known at the moment. 
   
As we already mentioned, here we consider the case of convex initial sets, which are bounded and $C^{1,1}$-regular. In the case of the classical volume preserving mean curvature flow the result by  Huisken \cite{Hui2}  implies that the flow remains smooth and converges expontially fast to the ball. In the fractional setting,  this result is proven in   \cite{CN} and \cite{CNR} under the additional assumption that the flow remains $C^{2 +\alpha}$-regular. Here we show that this additional regularity assumption is conditional, or in other words, this regularity assumption can proven  for convex initial sets, and thus we obtain the Huisken result in the fractional setting.

\begin{mtheorem} \label{main thm}
Let $E_0 \subset \R^{n+1}$ be $C^{1,1}$-regular convex set with $|E_0| = |B_1|$. Then the classical solution of \eqref{eq:the-flow} exists for all times $(E_t)_{t\geq 0}$ and converges to the ball $B_1(x_0)$, for some $x_0 \subset \R^{n+1}$, in $C^k$ for every $k \in \N$  exponentially fast. To be more precise, there is $T \geq 1$ depending on the initial set $E_0$ and on the dimension,  such that we may write the moving boundaries by the height functions over the unit sphere $h(\cdot,t): \Si^n \to \R$ as 
\[
\pa E_t = \{ h(x,t)x + x_0 : x \in \Si^n\}
\]
for $x_0 \in \R^{n+1}$ when $t \geq T$ and  it holds $\|h(\cdot,t) -1 \|_{C^{k}( \Si^n)} \leq C_{k} e^{ - c_{k}  t}$,  
where the constants $c_k>0$ and $C_k \geq 1$ depend on $n, k, T$ and on  the initial set $E_0$. 

\end{mtheorem}

Let us comment the proof of the Main Theorem. First, we remark that the $C^{1,1}$ assumption on $E_0$ is needed only to guarantee the existence of the flow, as our proof relies on the short time existence result in \cite{JL}.
As we mentioned above, the proof relies on  the a priori estimates proven in \cite{CN, CNR, CSV2}. Indeed, by the result in \cite{CNR} the sets $E_t$ along the flow remain convex, while the estimates in \cite{CSV2} imply that their fractional mean curvature remains uniformly bounded, i.e. 
\[
\sup_{t \in [0,T)} H_{E_t}^s(x) \leq C
\]
and there are radii $0<r<1<R$ such that $B_r(x_t) \subset E_t \subset B_R(x_t)$ as long as the flow is classically defined. These important estimates are our starting point and we first prove in Proposition \ref{prop.curvaturebound} that for convex sets the above fractional  curvature bound implies uniform $C^{1+s}$-H\"older regularity. We then recall that the authors in \cite{CN} (see also \cite{CSV2}) show that, if we would be able to improve the $C^{1+s}$-regularity into uniform $C^{2+\alpha}$-regularity, then the flow \eqref{eq:the-flow} does not develop singularities and converges exponentially fast to a ball. Our main contribution here is to prove this regularity step.
Roughly speaking, the idea is that geometric flows do not develop singularities under uniform $C^{1+\alpha}$-regularity estimate,  unless the singularity is due to change of topology. This is due to the fact that one may parametrize the equation such that it becomes a quasilinear PDE. We  use this idea in \cite{JL2} in a more complicated equation, and since we know that in our case the flow \eqref{eq:the-flow}  remains convex, and thus the topology does not change, we may adopt it in this setting.   

The first technical issue in the proof is to find a good parametrization of the flow. The idea is to differentiate the equation \eqref{eq:the-flow} with respect to space and parametrize the equation
\[
\nabla_{X(t)} V_t = - \nabla_{X(t)} H_{E_t^s}  \qquad \text{on } \, \pa E_t,
\]
where $X(t)$ is a vector field on the moving boundary $\pa E_t$. By doing this we obtain a quasilinear PDE for the derivative of the height function, where all the error terms are lower order. We point out that we could also first parametrize the original equation \eqref{eq:the-flow} by the height function and differentiate that, but this leads to much more complicated calculations. We  then improve some of the methods developed in \cite{JL} and prove the $C^{2+\alpha}$ regularity of the flow in Section 5. The main difference to \cite{JL} is that here we need sharp  estimates for the error terms in order to show that they do not blow up in finite time. Another issue is that here our equation is not a small parturbation of the fractional heat equation with respect to the sphere, and we need Schauder estimates for general linear parabolic equations defined on the unit sphere (see Section 4). Similar Schauder estimates are proven  in \cite{MW} in the case of evolving periodic graphs. 

The paper is organized as follows. In Section 2 we recall some results in the existing literature and prove the $C^{1+s}$-estimate of the flow. Section 3 is mostly devoted  to the calculations of  the parametrization of the equation. In Section 4 we generalize the Schauder estimates for general linear parabolic equations in  the Euclidean setting proven in  \cite{MP} (see Theorem \ref{teo:parabolic})  to the case of the sphere  (see Theorem \ref{thm2:schauder}). Since the result in the Euclidian setting is scattered in literature, we decide to give a self-contained proof  in the Appendix. Finally in Section 5  we prove that $C^{1+s}$-regularity of the flow implies uniform $C^{2+s+\alpha}$-regularity, which concludes the proof of the Main Theorem. 

\section{Notation and preliminary results}

Throughout the paper $C \geq 1$ and $c>0$ stand for  generic constants which may change from line to line. If needed, we specify their dependence on relevant parameters.  We denote the open ball with radius $r$ centered at $x$ by $B_r(x)$ and by $B_r$ if it is centered at the origin. The notation does not specify the dimension of the ball, but in order to avoid possible confusion, we  write $B_r(x) \subset \R^{k}$  to point out that the ball is $k$-dimensional. We denote the inner product  of $x,y \in \R^{n+1}$ by $\la x,y \ra$. 

For a given smooth and bounded set $E \subset \R^{n+1}$ the fractional mean curvature of order $s \in (0,1)$ at $x \in \pa E$ is defined as \eqref{def:fracmean}
in the principal valued sense. In order to define the normal velocity we say that a family of smooth sets $(E_{t})_{t \in [0,T)}$ is a smooth flow starting from $E_0$, if there is a family of  diffeomorphisms $\Phi_t : \R^{n+1} \to \R^{n+1}$ such that $t \mapsto \Phi_t(x)$ is smooth,  $\Phi_t(E_0) = E_t$ and $\Phi_0(x) = x$. The normal velocity at $y = \Phi_t(x) \in \pa E_t$ is then defined as $V_t(y) = \la \frac{\pa}{\pa t} \Phi_t(x), \nu_{E_t}(y) \ra$. Finally we say that a smooth flow  $(E_{t})_{t \in [0,T)}$ starting from $E_0$ is a solution to \eqref{eq:the-flow}, if it satisfies the associated equation point-wise. 

We recall that  if the initial set $E_0 \subset \R^{n+1}$ is bounded and $C^{1,1}$-regular, i.e., its boundary is $C^{1,1}$-regular hypersurface, then  by \cite{JL}  the equation \eqref{eq:the-flow}  has a solution for a short time $(E_t)_{t \in [0,T)}$  and  the sets $E_t$ for $t >0$ are smooth ($C^\infty$ -regular hypersurfaces). In this paper we restrict  to  convex initial sets $E_0$, which are $C^{1,1}$-regular. Then by the results in \cite{CNR, CSV2} we know that the sets $E_t$ remain convex, there are $r>0, R >1$ such that $B_r(x_t) \subset E_t \subset B_R(x_t)$ for some points $x_t \in \R^{n+1}$ and it holds 
\[
\sup_{t \in (0,T)} \sup_{x \in \pa E_t} H_{E_t}^s(x) \leq C,
\] 
where the constant is independent of the time of existence $T$.  We remark that by these results, we may parametrize the flow via the height function over the unit sphere $\Si^n$, which means that for every $t \in [0,T)$ there is a function $h(\cdot, t): \Si^n \to \R$ such that 
\begin{equation}
\label{eq:height}
\pa E_t = \{ h(x,t)x +x_t : x \in \Si^n\} 
\end{equation}
for some $x_t \in \R^{n+1}$. Moreover there are $c>0$ and $C \geq 1$ such that  $c \leq h(x,t)\leq C$ for every $x \in \Si^n$. We show later in Proposition \ref{prop.curvaturebound}, that the above estimate on the fractional mean curvature implies that the functions $h(\cdot, t)$ are uniformly $C^{1+s}$-regular.

Recall that our aim is to improve the $C^{1+s}$-regularity into $C^{2+\alpha}$-regularity. In other words, we show that the height functions  $h(\cdot, t)$ defined in \eqref{eq:height}  are in fact, uniformly $C^{2+s+\alpha}$ regular for small $\alpha>0$. 


\subsection{H\"older norms and Schauder estimate}
The notation $\nabla^k F$ stands for the $k$:th order differential of a vector field $F : \R^{n} \to \R^m$. For  matrix $A \in \R^{m \times k}$ we denote by $|A|$ its Frobenius norm. We define the usual H\"older norms of $F : \R^n \to \R$ by
\[
\|F\|_{C^0(\R^n)} : = \sup_{x \in \R^n} |F(x)| \quad \text{and} \quad  \|F\|_{C^\beta(\R^n)} : = \sup_{x\neq y  \in \R^n} \frac{|F(y) -F(x)|}{|y-x|^\beta} + \|F\|_{C^0(\R^n)} 
\] 
for $\beta \in (0,1]$. We extend this to $C^l$-norm, for $l = k + \beta$, with $k \in \N$ and $\beta \in [0,1)$ as
\[
 \|F\|_{C^l(\R^n)} := \sum_{j=0}^k \|\nabla^j F\|_{C^0(\R^n)} + \|\nabla^k F\|_{C^\beta(\R^n)} .
\] 
We then have the standard interpolation inequality \cite[Section 2.7]{Tri} i.e., assume $l_1$ and $l_2$ are positive numbers, $\theta \in (0,1)$ and denote  
\[
l = \theta l_1 + (1-\theta) l_2.
\] 
Then there is a constant $C \geq 1$ such that for every smooth and bounded function $u : \R^n \to \R$ it holds
\begin{equation}
\label{def:interpolation-eucl}
\|u\|_{C^l(\R^n)} \leq C\|u\|_{C^{l_1}(\R^n)}^\theta\|u\|_{C^{l_2}(\R^n)}^{1-\theta}.
\end{equation}

We will mostly  deal with functions defined on the unit sphere $\Si^n$, which we interpret as the boundary of the unit ball $\pa B_1 \subset \R^{n+1}$. Since $\Si^n$ is embedded in $\R^{n+1}$, we may extend every continuously differentiable vector field $F : \Si^n \to \R^k$ to $\R^{n+1}$ and denote the extension still by $F$. We define the tangential differential of $F$ at $x \in \Si^n$ as 
\[
\nabla_\tau F(x) :=  \nabla F(x) - (\nabla F (x)  x ) \otimes  x
\] 
and note that it does not depend on the chosen extension. For real valued functions $ u: \Si^n \to \R$ we define  the tangential gradient $\nabla_\tau u$ as the transpose of the tangential differential, and   define the tangential Hessian of $u : \Si^n \to \R$ as  $\nabla_\tau^2 u(x) = \nabla_\tau (\nabla_\tau u)(x)$. If $e_i$ is a basis vector for $\R^{n+1}$ we denote 
\[
\nabla_i u = \la \nabla_\tau u , e_i \ra
\]
for the tangential partial derivative and $\nabla_j \nabla_i u = \la \nabla_\tau^2 u\,  e_j, e_i \ra$. We may then  define the third order tangential derivatives as $\nabla_\tau \nabla_j \nabla_i u$  and iteratively  for every $k  = 3,4, \dots$.  We note that such a definition of $\nabla_\tau^k u$ does not agree with the standard definition of  a covariant derivative. 

We define the H\"older norms on $\Si^n$ as in the Euclidian setting. That is for $u : \Si^n \to \R$ we define  
\[
\|u\|_{C^0(\Si^n)} : = \sup_{x \in \Si^n} |u(x)| \quad \text{and} \quad  \|u\|_{C^\beta(\Si^n)} : = \sup_{x\neq y  \in \Si^n} \frac{|u(y) -u(x)|}{|y-x|^\beta} + \|u\|_{C^0(\Si^n)} 
\] 
for $\beta \in (0,1]$ and for  $C^l$-norm, with $l = k + \beta$, $k \in \N$ and $\beta \in [0,1)$,  as
\[
 \|u\|_{C^l(\Si^n)} := \sum_{j=0}^k \|\nabla_\tau^j u\|_{C^0(\Si^n)} + \|\nabla_\tau^k u\|_{C^\beta(\Si^n)} .
\] 
The interpolation inequality \eqref{def:interpolation-eucl} extends immediately to $\Si^n$. 
\begin{proposition}
\label{label:interpolation}
Assume $l_1$ and $l_2$ are positive numbers, $\theta \in (0,1)$ and denote  
\[
l= \theta l_1 + (1-\theta) l_2.
\] 
Then there is a constant $C\geq 1$ such that for every smooth function $u : \Si^n \to \R$ it holds
\[
\|u\|_{C^l(\Si^n)} \leq C\|u\|_{C^{l_1}(\Si^n)}^\theta\|u\|_{C^{l_2}(\Si^n)}^{1-\theta}.
\] 
\end{proposition}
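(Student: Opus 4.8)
The plan is to reduce the statement to the Euclidean interpolation inequality \eqref{def:interpolation-eucl} by extending $u$ from $\Si^n$ to the whole space. Given $u : \Si^n \to \R$ I would let $\bar u : \R^{n+1}\setminus\{0\} \to \R$ be the $0$-homogeneous extension $\bar u(x) = u(x/|x|)$, fix once and for all a cut-off $\chi \in C^\infty_c(B_2 \setminus \overline{B_{1/2}})$ with $\chi \equiv 1$ on a neighbourhood of $\Si^n$, and set $v := \chi\,\bar u \in C^\infty_c(\R^{n+1})$. Since $v$ is smooth and bounded, \eqref{def:interpolation-eucl} (with $\R^n$ replaced by $\R^{n+1}$) gives
\[
\|v\|_{C^s(\R^{n+1})} \le C\,\|v\|_{C^{s_1}(\R^{n+1})}^{\theta}\,\|v\|_{C^{s_2}(\R^{n+1})}^{1-\theta},
\]
so it is enough to establish the norm equivalence
\begin{equation}
\label{eq:normequiv}
c\,\|u\|_{C^{\sigma}(\Si^n)} \le \|v\|_{C^{\sigma}(\R^{n+1})} \le C\,\|u\|_{C^{\sigma}(\Si^n)}
\end{equation}
for $\sigma \in \{s,s_1,s_2\}$, with $c,C$ depending only on $n$ and on $\lceil\max\{s_1,s_2\}\rceil$ (and the fixed $\chi$). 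Indeed, feeding the left inequality of \eqref{eq:normequiv} for $\sigma=s$ and the right inequality for $\sigma=s_1,s_2$ into the displayed estimate yields the claim.

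To prove \eqref{eq:normequiv} I would use that, because $\bar u$ is radially constant, its Euclidean derivatives on $\Si^n$ and the iterated tangential derivatives $\nabla_\tau^k u$ from the previous subsection are linked by universal (in particular fixed, $u$-independent) differential operators with smooth coefficients: for each $k$ one can write $\nabla_\tau^k u(x)$ as a linear combination, with coefficients that are smooth functions of the bounded variable $x\in\Si^n$, of $\nabla^j\bar u(x)$ with $j\le k$, and conversely $\nabla^j\bar u(x)|_{\Si^n}$ as such a combination of $\nabla_\tau^i u(x)$ with $i\le j$; the discrepancy terms come only from the (bounded, parallel) second fundamental form of $\Si^n$ and are independent of the chosen extension. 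Since on a bounded set the $C^\beta$-norm is an algebra norm and $x\mapsto x$ is Lipschitz, these identities upgrade to comparability of the $C^0$-norms and $C^\beta$-seminorms of all derivatives up to order $\lceil\max\{s_1,s_2\}\rceil$; together with the fact that the chordal distance on $\Si^n$ is exactly the restriction of the Euclidean distance, this gives $\|u\|_{C^\sigma(\Si^n)}\approx\|\bar u\|_{C^\sigma(A)}$ on the annulus $A:=B_2\setminus\overline{B_{1/2}}$. Since $v$ is supported in $A$ and equals $\bar u$ near $\Si^n$, the Leibniz rule for $\chi\bar u$ then gives $\|v\|_{C^\sigma(\R^{n+1})}\le C\|\bar u\|_{C^\sigma(A)}$, while restricting to a neighbourhood of $\Si^n$ gives $\|v\|_{C^\sigma(\R^{n+1})}\ge c\|u\|_{C^\sigma(\Si^n)}$; this is \eqref{eq:normequiv}.

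I expect the only genuinely technical point to be this norm equivalence, and more precisely its top-order part. One cannot simply invoke interpolation on the Riemannian manifold $\Si^n$, since $\nabla_\tau^k u$ as defined here is not the covariant derivative, so the passage to Euclidean derivatives of the extension must be carried out explicitly; the care is in tracking the H\"older seminorm $[\nabla_\tau^k u]_{C^\beta}$ through the coefficient-dependent identities in \emph{both} directions. Once the two-sided $C^0$- and $C^\beta$-bounds for all derivatives up to the relevant order are in place, the remaining steps are routine. An essentially equivalent alternative is to cover $\Si^n$ by a fixed finite atlas with a subordinate partition of unity, apply \eqref{def:interpolation-eucl} in each chart and sum; the same bookkeeping then reappears for the transition maps and the cut-offs.
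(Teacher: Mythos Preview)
Your proposal is correct and is exactly the kind of argument the paper has in mind: the paper gives no proof beyond the sentence preceding the proposition, ``The interpolation inequality \eqref{def:interpolation-eucl} extends immediately to $\Si^n$,'' and your extension-and-cutoff reduction to the Euclidean case is a standard and correct way to make that claim precise. The norm equivalence you single out as the only technical point is indeed routine (and in fact simplifies slightly since for the $0$-homogeneous extension $\nabla\bar u\cdot x=0$, so $\nabla_\tau u=\nabla\bar u$ on $\Si^n$).
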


Let us return to  functions defined  in $\R^n$. We recall the definition of the fractional Laplacian of order $\gamma \in (0,2)$ 
\[
\Delta^\gamma u(x) = \int_{\R^n} \frac{u(y+x)- u(x)}{|y|^{n+\gamma}}\, dy, 
\]
which has to be interpreted in the principal valued sense. We  need  parabolic Schauder estimates for equations with general linear fractional-type operator of order $\gamma = 1+s$.  To this aim we say that  $A(\cdot): \R^{n} \to \R^{n}\times \R^{n}$ is \emph{symmetric and elliptic} matrix  field, if there are constants $\lambda>0$ and $\Lambda \geq 1$ such that  for every $x \in \R^{n}$ the matrix $A(x)$ is symmetric and satisfies 
\begin{equation} \label{def:matrix-elliptic}
\lambda |\xi|^2 \leq \la A(x) \xi, \xi \ra \leq \Lambda |\xi|^2 \qquad \text{for all } \, x, \xi \in \R^{n}.  
\end{equation}
For a symmetric and elliptic matrix  field $A(\cdot)$ we define linear operator as
\begin{equation} \label{def:linear-operator-1}
L_A[u](x):= \int_{\R^n}\frac{u(y+x)- u(x)}{\la A(x) y,y\ra^{\frac{n+1+s}{2}}}\, dy. 
\end{equation}
Our aim is to give a simple proof of Schauder estimate
for the Cauchy problem
\beq
\begin{cases} \label{eq:parabolic}
\pa_t u(x,t) = L_{A}[u] +f(x,t)
\\
u(x,0)=0,
\end{cases}
\eeq
where $A(\cdot, t)$ is symmetric, elliptic and uniformly H\"older continuous for every $t \in [0,T)$.

The following Schauder estimate can be found  for instance in  \cite{MP}. For the sake of completeness and for the reader's convenience, we provide a proof based  purely on PDE methods  in the Appendix.     
\begin{theorem}
\label{teo:parabolic}
Assume  $\alpha >0$ is such that  $\alpha < \min\{s,1-s\}$ and fix $T>0$. Assume that for every $t \in [0,T)$, $A(\cdot,t)$ is symmetric and elliptic, and assume that  $\sup_{t <T}\|A(\cdot,t)\|_{C^{\alpha}(\R^n)}\leq C_0$. Let  $f: \R^n \times [0,T) \to \R$ be such that  $\sup_{t <T}\|f(\cdot,t)\|_{C^{\alpha}(\R^n)}< \infty$ and assume  $u$ is  the solution of \eqref{eq:parabolic} such that $\lim_{ |x| \to \infty}u(x,t) \leq C$. Then there exists a constant $C_T$, depending  on $\alpha, \lambda, \Lambda, C_0, \gamma, T$ and on the dimension, such that 
\[
\sup_{t<T} \|u(\cdot,t)\|_{C^{1+s+\alpha}(\R^n)} \leq C_T \sup_{t<T}\|f(\cdot,t)\|_{C^{\alpha}(\R^n)}.
\]
\end{theorem}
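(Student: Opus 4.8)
The plan is to run the standard Schauder scheme in its nonlocal, parabolic form: first prove the estimate for the model equation with $x$-independent (but possibly $t$-dependent) coefficients, and then reach the general case by freezing the matrix $A$ at a point and absorbing the resulting remainder as a lower-order perturbation.

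\emph{Step 1: the model case.} Assume first that $A(\cdot,t)\equiv A_0(t)$. Then $L_{A_0}$ is a Fourier multiplier: the substitution $z=A_0^{1/2}y$ in \eqref{def:linear-operator-1} gives $\widehat{L_{A_0}u}(\xi)=-m_{A_0}(\xi)\,\hat u(\xi)$ with $m_{A_0}(\xi)=c_{n,s}(\det A_0)^{-1/2}|A_0^{-1/2}\xi|^{1+s}$, a homogeneous elliptic symbol of order $1+s$ which by \eqref{def:matrix-elliptic} is comparable to the symbol $|\xi|^{1+s}$ of $-\Delta^{1+s}$. Writing the solution of \eqref{eq:parabolic} by Duhamel's formula,
\[
u(\cdot,t)=\int_0^t \Gamma_{t-\tau}*f(\cdot,\tau)\,d\tau, \qquad \Gamma_\sigma:=\mathcal F^{-1}\big[e^{-\sigma\, m_{A_0}(\cdot)}\big],
\]
I would reduce everything to kernel bounds for $\Gamma_\sigma$. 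Since $m_{A_0}$ is elliptic and $(1+s)$-homogeneous, $\Gamma_\sigma$ is positive and self-similar, $\Gamma_\sigma(x)=\sigma^{-n/(1+s)}P(\sigma^{-1/(1+s)}x)$ with $P$ smooth, $\int_{\R^n}P=1$ and $|\nabla^k P(w)|\le C(1+|w|)^{-(n+1+s)}$, so that $\int_{\R^n}|\nabla^k\Gamma_\sigma(z)|\,|z|^{\beta}\,dz\le C\,\sigma^{(\beta-k)/(1+s)}$ for $\beta\in[0,1+s)$ and $k=0,1,2$. Using $\int\nabla^k\Gamma_\sigma=0$ for $k\ge1$, each time slice satisfies $|\nabla^k\Gamma_\sigma*g(x)-\nabla^k\Gamma_\sigma*g(x')|\le C\,[g]_{C^\alpha}\min\big(\sigma^{(\alpha-k)/(1+s)},\,|x-x'|\,\sigma^{(\alpha-k-1)/(1+s)}\big)$; splitting $\int_0^t$ at $\sigma=|x-x'|^{1+s}$ and using $k=1$ on the first piece, $k=2$ on the second, gives $[\nabla u(\cdot,t)]_{C^{s+\alpha}(\R^n)}\le C\sup_{\tau<t}[f(\cdot,\tau)]_{C^\alpha}$, the two elementary time integrals converging precisely when $\alpha<\min\{s,1-s\}$ --- which also guarantees $1+s+\alpha\in(1+s,\min\{2,1+2s\})$, so that the $C^{1+s+\alpha}$-norm is just $\|u\|_{C^0}+\|\nabla u\|_{C^0}+[\nabla u]_{C^{s+\alpha}}$. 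Together with $\|u(\cdot,t)\|_{C^0}\le t\sup_{\tau<t}\|f(\cdot,\tau)\|_{C^0}$ and $\|\nabla u(\cdot,t)\|_{C^0}\le C\,t^{(s+\alpha)/(1+s)}\sup_{\tau<t}[f(\cdot,\tau)]_{C^\alpha}$, this proves the theorem when $A$ is $x$-independent.

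\emph{Step 2: freezing coefficients.} In general, fix $x_0$ and $t$ and write, on a ball $B_\rho(x_0)$,
\[
\pa_t u = L_{A(x_0,t)}[u] + R(x,t) + f(x,t), \qquad R(x,t):=\big(L_{A(\cdot,t)}-L_{A(x_0,t)}\big)[u](x).
\]
The remainder kernel $K(x,y)=\la A(x)y,y\ra^{-\frac{n+1+s}{2}}-\la A(x_0)y,y\ra^{-\frac{n+1+s}{2}}$ obeys $|K(x,y)|\le C\,|A(x)-A(x_0)|\,|y|^{-(n+1+s)}\le C\,C_0\,|x-x_0|^\alpha\,|y|^{-(n+1+s)}$; reading $R$ through symmetrized second differences of $u$ and splitting the $y$-integral at $|y|=\rho$, its near-diagonal part is bounded by $C_0\rho^\alpha\|u(\cdot,t)\|_{C^{1+s+\alpha}}$ and the tail $|y|\ge\rho$ by $C_0\rho^{\alpha-1-s}\|u(\cdot,t)\|_{C^0}$, so that $R$ is genuinely of lower order with a coefficient that becomes small as $\rho\to0$. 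Inserting this into an interior version of the Step 1 estimate on $B_\rho(x_0)$ --- the tail of the frozen nonlocal operator only contributing a $\|u\|_{C^0}$-term --- choosing $\rho$ small so that $C_0\rho^\alpha$ falls below the absorption threshold, and interpolating the lower-order norms of $u$ via \eqref{def:interpolation-eucl}, a standard covering and iteration over $\R^n$ yields
\[
\sup_{t<T}\|u(\cdot,t)\|_{C^{1+s+\alpha}(\R^n)}\le C\Big(\sup_{t<T}\|f(\cdot,t)\|_{C^\alpha(\R^n)}+\sup_{t<T}\|u(\cdot,t)\|_{C^0(\R^n)}\Big).
\]
Finally, the $\|u\|_{C^0}$-term is removed using the zero initial datum: from the Duhamel representation with the frozen model, or from the comparison principle for \eqref{eq:parabolic} (valid since the jump kernel $\la A(x)y,y\ra^{-(n+1+s)/2}$ is positive), one gets $\sup_{t<T}\|u(\cdot,t)\|_{C^0}\le T\sup_{t<T}\|f(\cdot,t)\|_{C^0}$, which closes the estimate.

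\emph{Main obstacle.} The delicate part is Step 1 --- establishing the self-similar kernel bounds for $L_{A_0}$ of order $1+s\in(1,2)$, where the principal value has to be read through symmetrized second differences, and carrying out the time-splitting at the exact borderline exponents with no logarithmic loss; this is precisely where $\alpha<\min\{s,1-s\}$ enters. The freezing step is then routine apart from one genuine subtlety: since $R$ is itself a nonlocal operator, controlling it in $C^\alpha$ near $x_0$ and performing the absorption correctly requires separating the near-diagonal contribution (small thanks to $A\in C^\alpha$ and $u\in C^{1+s+\alpha}$) from the far tail (small by the decay of $K$ together with $u\in C^0$), and then interpolating the latter against $\|u\|_{C^{1+s+\alpha}}$.
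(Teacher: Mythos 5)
Your overall scheme matches the paper's: a model estimate for spatially constant coefficients followed by a frozen-coefficient perturbation argument with absorption via smallness of $\|A(x)-A(x_0)\|$ and a $C^0$ bound from the maximum principle. The difference is in how the model case is handled. The paper works entirely on the Fourier side, using a Littlewood--Paley decomposition: it writes $\widehat u(\xi,t)=\int_0^t e^{-g(\tau,\xi)|\xi|^{1+s}}\widehat f(\xi,t-\tau)\,d\tau$ with $g(\tau,\xi)=\int_0^\tau a(\xi,\sigma)\,d\sigma$, bounds $\|\mathcal F^{-1}[e^{-g|\xi|^{1+s}}\delta(2^{-j}\xi)]\|_{L^1}$ by Cauchy--Schwarz and Plancherel using only symbol derivative bounds $|\partial_\xi^\beta a|\le C|\xi|^{-|\beta|}$, and then invokes the dyadic characterization of H\"older norms. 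You instead push the Duhamel representation to physical space and rely on pointwise self-similar decay bounds for the anisotropic fractional heat kernel $\Gamma_\sigma$, then split the time integral at $\sigma=|x-x'|^{1+s}$. Both are standard routes; the paper's has the advantage of requiring nothing beyond symbol estimates (no physical-space kernel bounds), while yours is perhaps more transparent about where the threshold $\alpha<\min\{s,1-s\}$ comes from.

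One imprecision to flag: the paper's model case has $A_0(t)$ genuinely time-dependent, so the evolution operator is not a convolution with $\Gamma_{t-\tau}$; its symbol is $e^{-\int_\tau^t m_{A_0(\sigma)}(\xi)\,d\sigma}$, which is $(1+s)$-homogeneous in $\xi$ but with an angular part that varies with $(t,\tau)$. Your self-similarity claim therefore has to be read as a \emph{family} of kernels $P_{t,\tau}$ satisfying decay bounds uniform in $(t,\tau)$ by ellipticity; this works, but it is a step you elide, and it is precisely the place where the paper's Fourier-side argument is cleaner (it handles the time-ordered exponential automatically via the symbol estimates on $a(\xi,t)$). With that caveat, your Step~1 is sound; your Step~2 matches the paper's cutoff argument (the paper introduces $v=\eta_{x_0}^2 u$ and splits $L_A[\eta_{x_0}^2u]$ into $\eta_{x_0}^2L_A[u]$ plus lower-order commutator terms, which it then controls via the kernel lemma before covering and interpolating), and the remaining details are routine.
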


We will prove later, in Theorem \ref{thm2:schauder}, that the analogous result holds on the sphere.

\subsection{Preliminary results}

As we already mentioned, we know that the evolving sets $E_t$  remain  convex and satisfy the a priori estimates from \cite{CSV2} stated at the beginning of this section. Next we show that these a priori estimates  imply that the sets $E_t$ are uniformly $C^{1+s}$-regular. 

\begin{proposition}
\label{prop.curvaturebound}
Assume that $E \subset \R^{n+1}$ is a convex set such that $B_r \subset E \subset B_R$ and 
\[
\sup_{x \in \pa E} H_{E}^s(x) \leq C.
\]
Then  $E$ is uniformly $C^{1+s}$-regular and the boundary can be written as 
\[
\pa E =  \{ h(x)x : x \in \Si^n\}
\] 
with $\|h\|_{C^{1+s}(\Si^n)} \leq C$. 
\end{proposition}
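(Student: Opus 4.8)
The plan is to exploit the convexity together with the inner/outer ball condition $B_r\subset E\subset B_R$ to first get $C^{1,\gamma}$-regularity for every $\gamma<1$ with bounds depending only on $r,R$, and then to upgrade the exponent to exactly $1+s$ using the curvature bound. The convexity already guarantees that $\pa E$ is a Lipschitz graph over $\Si^n$ with $\|h\|_{C^{0,1}(\Si^n)}\le C(r,R)$, so the nontrivial content is the sharp Hölder exponent for $\nabla_\tau h$. First I would reduce to a local statement: fix $x_0\in\pa E$, after a rotation assume the inner normal is $e_{n+1}$, and write $\pa E$ locally as the graph of a convex function $u:B_\rho\subset\R^n\to\R$ with $u(0)=0$, $\nabla u(0)=0$; the inner ball at $x_0$ gives $u(z)\le C|z|^2$ while convexity plus the outer ball gives a matching lower barrier, so $\nabla u$ is already continuous and the issue is purely its modulus of continuity. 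The claim to prove is then $|\nabla u(z)|\le C|z|^{s}$ (equivalently $u(z)\le C|z|^{1+s}$ after integrating along rays), from which $\|h\|_{C^{1+s}}\le C$ follows by a standard covering/patching argument using the uniform Lipschitz control.

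The heart of the argument is the following: suppose toward a contradiction that $u(z)$ does not satisfy $u(z)\le C|z|^{1+s}$, i.e. there are points $z_k\to 0$ with $u(z_k)\,|z_k|^{-(1+s)}\to\infty$. I would estimate $H_E^s(x_0)$ from below and show it must blow up, contradicting the hypothesis. The key geometric fact is that $H_E^s(x_0)$ controls how fast the boundary can ``open up'' away from its tangent plane at $x_0$: the defining integral \eqref{def:fracmean} splits as $\int_{\R^{n+1}\setminus E}|y-x_0|^{-(n+1+s)}\,dy-\int_E|y-x_0|^{-(n+1+s)}\,dy$, and by convexity $E$ lies on one side of the tangent hyperplane $T$ at $x_0$, so comparing $E$ to the half-space $H$ bounded by $T$ one gets
\[
H_E^s(x_0) \ge H_H^s(x_0) + c\int_{H\setminus E}\frac{dy}{|y-x_0|^{n+1+s}} \ge c\int_{H\setminus E}\frac{dy}{|y-x_0|^{n+1+s}} ,
\]
because $H_H^s(x_0)=0$ by symmetry. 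Since $u$ is convex, if $u$ exceeds $M|z_k|^{1+s}$ at some point on scale $\delta_k:=|z_k|$, then $u\ge cM|z|^{1+s}$ on a whole cone of directions at scale $\delta_k$, so the region $H\setminus E$ contains a set of the form $\{|z|\sim\delta_k,\ cM\delta_k^{1+s}\lesssim y_{n+1}\lesssim \delta_k\}$ of measure comparable to $\delta_k^n(\delta_k - cM\delta_k^{1+s})$; plugging this into the last integral and using that on this region $|y-x_0|\lesssim\delta_k$, a direct computation gives a lower bound for $H_E^s(x_0)$ that diverges as $M\to\infty$ (uniformly in the location of the bad point), contradicting $\sup_{\pa E}H_E^s\le C$. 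This yields $u(z)\le C|z|^{1+s}$ with $C=C(r,R,\|H^s\|_\infty)$, hence the Hessian-type decay $|\nabla u(z)|\le C|z|^{s}$, and therefore the local $C^{1+s}$-bound.

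Finally I would globalize: the estimates above are uniform in the base point $x_0\in\pa E$ (the constants depend only on $r,R$ and the curvature bound, never on $x_0$), so covering $\pa E$ by finitely many such graph patches with controlled overlaps — their number and the transition maps being controlled by $r,R$ alone — and using the uniform Lipschitz bound on $h$ to pass between the local graph coordinates and the height function over $\Si^n$, one concludes $\|h\|_{C^{1+s}(\Si^n)}\le C$. The main obstacle is the quantitative lower bound for the fractional curvature in terms of the ``opening'' of the convex body near a boundary point: one must be careful that the contribution of $\int_{H\setminus E}$ genuinely dominates and is not cancelled by the $-\int_E$ term, which is where convexity (forcing $E\subset H$) and the scale-localization of the bad region (so that $|y-x_0|\lesssim\delta_k$ on it, making the kernel as large as possible) are both essential.
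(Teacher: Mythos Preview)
Your overall strategy coincides with the paper's: reduce to a local estimate $u(z)\le C|z|^{1+s}$ for the convex graph at each boundary point, argue by contradiction, and force $H_E^s(x_0)$ to blow up via the half-space comparison $E\subset H$ together with $H_H^s(x_0)=0$. This last step is exactly the paper's reflection argument (reflecting $E$ across the tangent hyperplane lands in $E^c$), just phrased as a comparison of curvatures.

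There is, however, a genuine gap in the core volume estimate. The assertion ``then $u\ge cM|z|^{1+s}$ on a whole cone of directions at scale $\delta_k$'' is false: in dimension $n\ge 2$, take $u(z)=g(z\cdot\hat z_k)$ for any one-variable convex $g$ with $g(0)=g'(0)=0$ and $g(\delta_k)=M\delta_k^{1+s}$; then $u(z_k)=M\delta_k^{1+s}$ while $u\equiv 0$ on the hyperplane $\{z\cdot\hat z_k=0\}$, so convexity gives no lower bound on $u$ in transverse directions. As a consequence your set $\{|z|\sim\delta_k,\ cM\delta_k^{1+s}\lesssim y_{n+1}\lesssim \delta_k\}$ need not lie in $H\setminus E$ (and the inequalities on $y_{n+1}$ look inverted anyway: near $x_0$ one has $H\setminus E=\{0\le y_{n+1}<u(z)\}$, so a set in $H\setminus E$ is described by an \emph{upper} bound on $y_{n+1}$). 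The paper closes this gap by bringing in the \emph{second} supporting hyperplane, the one at the bad point $y_k=(z_k,u(z_k))$: convexity forces $E$ into the intersection of the two supporting half-spaces, so the wedge
\[
U_k=\{y_{n+1}>0\}\cap\{\la y-y_k,\nu_E(y_k)\ra>0\}\cap B_{2\delta_k}
\]
lies in $E^c\cap H$. Since $y_k$ sits at height $\ge M\delta_k^{1+s}$ above the first hyperplane one gets $|U_k|\ge cM\,\delta_k^{n+1+s}$, and then $H_E^s(x_0)\ge c\int_{U_k}|y|^{-(n+1+s)}\,dy\ge cM$, which is the contradiction. So the repair is to replace the unjustified pointwise lower bound on $u$ by this two-hyperplane wedge. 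A separate minor point: ``the inner ball at $x_0$ gives $u(z)\le C|z|^2$'' is not justified, since $B_r\subset E$ does not yield an interior tangent ball at every boundary point; but this step is inessential, as the contradiction argument uses only the uniform Lipschitz bound.
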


\begin{proof}
The assumptions on $E$ imply that there is $r_0>0$ such that for every $x \in \pa E$ we may write the boundary locally as a graph of a bounded convex function, i.e.,  $\pa E \cap B_{r_0}(x)$ is contained on graph of a convex function which is uniformly bounded. Our goal is to prove 
\begin{equation}
\label{eq:curvature1}
\sup_{y\neq x \in \pa E} \frac{\la x-y, \nu_E(x)\ra }{|y-x|^{1+s}}\leq C. 
\end{equation}
The estimate \eqref{eq:curvature1} will then  imply the claim by the following argument. Indeed, let us fix a point $x_0 \in \pa E$ and without loss of generality we may assume $x_0 = 0$ and $\nu_E(x_0) = -e_{n+1}$. As we mentioned above, we may write 
\[
\pa E \cap B_{r_0} \subset  \{ (x',u(x')) \in \R^{n+1} : x' \in B_{r_0} \subset  \R^n \}
\] 
for a bounded convex function $u : B_{r_0}\subset  \R^{n} \to \R$. Let us choose $x',y' \in B_{r_0/2} \subset \R^n$. Note that since $u$ is convex and bounded in $B_{r_0}$, it is uniformly Lipschitz continuous in $B_{r_0/2}$.  Then \eqref{eq:curvature1}  implies
\[
\begin{split}
\langle \big(y'-x'&, u(y')-u(x')\big) , (-\nabla u(x'), 1) \rangle   \\
&\leq C\sqrt{1+ |\nabla u(x')|^2}(|y'-x'|^2 + (u(y')-u(x'))^2)^{\frac{1+s}{2}} \leq C |y'-x'|^{1+s}.
\end{split}
\] 
In other words
\[
\langle  u(y')-u(x') -\nabla u(x') , y'-x' \rangle  \leq C |y'-x'|^{1+s} \quad \text{for every } \, x',y' \in  B_{r_0/2}.
\]
Since  $u$  is convex,  then $u(y')-u(x') \geq \langle  \nabla u(x') , y'-x' \rangle$.
  The above then  implies that $\|u\|_{C^{1+s}(B_{r_0/4})} \leq C$ (see e.g. {\bf(H4)} in \cite[Appendix A]{FR}). Hence, we need to prove \eqref{eq:curvature1}.

We argue by contradiction and assume that there are sets $E_k$ which satisfy the assumptions and points $y_k \neq x_k \in \pa E_k$ such that  $|y_k - x_k| = r_k \leq 2^{-4n} r_0$, with $r_k \to 0$, and  
\begin{equation}
\label{eq:curvature2}
\frac{\la x_k-y_k, \nu_{E_k}(x_k) \ra}{|y_k-x_k|^{1+s}}\geq k. 
\end{equation}
Again we may assume that $x_k = 0$, $\nu_{E_k}(0) = -e_{n+1}$.  Since $E_k$ is convex then  
\[
  E_k \subset \{ y \in \R^{n+1} : \la y-y_k, \nu_{E_k}(y_k)\ra \leq 0 \} . 
\]
Then the following wedge-type region 
\[
U_k:= \{  y \in B_{2r_k} :  y_{n+1} > 0 , \,\,   \la y-y_k, \nu_{E_k}(y_k)\ra > 0  \}
\]
is contained in the complement of $E_k$, i.e., $U_k \cap E_k = \emptyset$. Note that \eqref{eq:curvature2} implies that 
\[
\la y_k ,  e_{n+1} \ra \geq\, k r_k^{1+s}.
\]
Therefore we deduce  that  there is a uniform constant $c>0$ such that 
\begin{equation}
\label{eq:curvature3}
|U_k| \geq  c \, k  \, r_k^{n+1+s}.
\end{equation}

Recall the definition of the fractional mean curvature
\[
H_{E_k}^s(0)  = \int_{\R^{n+1}\setminus E_k} \frac{dy}{|y|^{n+1+s}} - \int_{E_k} \frac{dy}{|y|^{n+1+s}} \leq C. 
\]  
By convexity and by $\nu_{E_k}(0) = - e_{n+1}$ it holds $ E_k \subset \{ y :  y_{n+1} \geq 0\}$. Therefore, denoting by $\tilde E_k$ the reflection of $E_k$ with respect to the hyperplane $\{x_{n+1}=0\}$, we obtain that $\tilde E_k\subset E_k^c$ and $U_k \subset (E_k\cup \tilde E_k)^c$ which implies
\[
\int_{U_k}\frac{dy}{|y|^{n+1+s}}\, dy
\leq
\int_{\R^{n+1}\setminus (E_k\cup \tilde E_k)  } \frac{dy}{|y|^{n+1+s}}
= \int_{\R^{n+1}\setminus E_k} \frac{dy}{|y|^{n+1+s}} - \int_{E_k} \frac{dy}{|y|^{n+1+s}} \leq C.
\]
By the definition of the set $U_k$ it holds $U_k \subset \{ y \in \R^{n+1}:  y_{n+1} > 0\} \setminus E_k $.  Moreover, the definition of $U_k$ implies that $|y| \leq 2r_k$ for all $y \in U_k$. Therefore by \eqref{eq:curvature3} and by the above inequality we have 
\[
 C \geq \int_{U_k} \frac{dy}{|y|^{n+1+s}} \geq c \frac{|U_k|}{r_k^{n+1+s}} \geq c \, k ,
\]
where the constant $c>0$ does not depend on $k$. Hence, we obtain a contradiction when  $k \to \infty$.
\end{proof}

\definecolor{qqqqff}{rgb}{0,0,1}
\definecolor{ffvvqq}{rgb}{1,0.3333333333333333,0}
\definecolor{uuuuuu}{rgb}{0.26666666666666666,0.26666666666666666,0.26666666666666666}
\definecolor{qqwuqq}{rgb}{0,0.39215686274509803,0}
\begin{figure}
    \label{fig:enter-label}
\begin{tikzpicture}[line cap=round,line join=round,>=triangle 45,x=40cm,y=40cm]
\clip(-0.19344947284782735,-0.0509192275431028) rectangle (0.03502429833611402,0.07997650169592732);
\draw [line width=1pt] (-0.09896184334002958,0.031131615956817182)-- (-0.09896184334002958,0);
\draw[line width=1pt,color=qqwuqq,fill=qqwuqq,fill opacity=0.25]
(-0.19561305022646316,0.07997650169592732)--(-0.19561305022646316,0.07997650169592732)--(-0.1889598314723272,0.07997650169592732)--(-0.18924739990775813,0.07997650169592732)--(-0.190156778524716,0.07997650169592732)--(-0.19106615714167385,0.07997650169592732)--(-0.19197553575863172,0.07997650169592732)--(-0.19288491437558958,0.07997650169592732)--(-0.19379429299254744,0.07997650169592732)--(-0.1947036716095053,0.07997650169592732)--(-0.19561305022646316,0.07997650169592732);
\draw[line width=1pt,color=qqwuqq,fill=qqwuqq,fill opacity=0.25]
(-0.1889598314723272,0.07997650169592732)--(-0.18833802129080027,0.07997650169592732)--(-0.1874286426738424,0.07997650169592732)--(-0.18651926405688454,0.07997650169592732)--(-0.18560988543992668,0.07996531530817765)--(-0.1847005068229688,0.0793783616142047)--(-0.18379112820601096,0.07879285108977155)--(-0.18288174958905307,0.07820878730081773)--(-0.1819723709720952,0.07762617383984767)--(-0.18106299235513734,0.07704501432626193)--(-0.18015361373817948,0.07646531240669456)--(-0.17924423512122162,0.07588707175535644)--(-0.17833485650426376,0.0753102960743845)--(-0.1774254778873059,0.07473498909419733)--(-0.17651609927034803,0.0741611545738571)--(-0.17560672065339017,0.07358879630143803)--(-0.1746973420364323,0.07301791809440145)--(-0.17378796341947444,0.07244852379997785)--(-0.17287858480251658,0.07188061729555568)--(-0.17196920618555872,0.07131420248907755)--(-0.17105982756860086,0.07074928331944354)--(-0.170150448951643,0.07018586375692215)--(-0.16924107033468513,0.06962394780356887)--(-0.16833169171772727,0.0690635394936526)--(-0.1674223131007694,0.06850464289409015)--(-0.16651293448381155,0.067947262104889)--(-0.16560355586685369,0.0673914012595984)--(-0.16469417724989582,0.06683706452576935)--(-0.16378479863293796,0.06628425610542324)--(-0.1628754200159801,0.06573298023552968)--(-0.1619660413990222,0.06518324118849364)--(-0.16105666278206437,0.06463504327265225)--(-0.16014728416510648,0.06408839083278108)--(-0.15923790554814865,0.06354328825061101)--(-0.15832852693119076,0.0629997399453548)--(-0.1574191483142329,0.06245775037424477)--(-0.15650976969727504,0.061917324033080964)--(-0.15560039108031717,0.06137846545679061)--(-0.1546910124633593,0.060841179219999014)--(-0.15378163384640145,0.06030546993761212)--(-0.1528722552294436,0.05977134226541112)--(-0.15196287661248573,0.059238800900659534)--(-0.15105349799552786,0.05870785058272274)--(-0.15014411937857,0.05817849609370088)--(-0.14923474076161214,0.05765074225907481)--(-0.14832536214465428,0.05712459394836611)--(-0.14741598352769641,0.05660005607581104)--(-0.14650660491073855,0.05607713360104904)--(-0.1455972262937807,0.055555831529826315)--(-0.14468784767682283,0.05503615491471459)--(-0.14377846905986497,0.05451810885584578)--(-0.1428690904429071,0.05400169850166281)--(-0.14195971182594924,0.05348692904968724)--(-0.14105033320899138,0.05297380574730396)--(-0.14014095459203352,0.052462333892563634)--(-0.13923157597507563,0.05195251883500332)--(-0.1383221973581178,0.05144436597648587)--(-0.1374128187411599,0.05093788077205845)--(-0.13650344012420207,0.050433068730831185)--(-0.13559406150724418,0.04992993541687592)--(-0.13468468289028634,0.049428486450146435)--(-0.13377530427332845,0.04892872750741996)--(-0.13286592565637062,0.04843066432326143)--(-0.13195654703941273,0.04793430269101044)--(-0.13104716842245487,0.04743964846379237)--(-0.130137789805497,0.04694670755555361)--(-0.12922841118853914,0.04645548594212237)--(-0.12831903257158128,0.04596598966229544)--(-0.12740965395462342,0.04547822481895184)--(-0.12650027533766556,0.04499219758019435)--(-0.1255908967207077,0.04450791418051958)--(-0.12468151810374983,0.044025380922017764)--(-0.12377213948679197,0.043544604175603005)--(-0.1228627608698341,0.043065590382275205)--(-0.12195338225287623,0.04258834605441459)--(-0.12104400363591837,0.042112877777109925)--(-0.12013462501896051,0.041639192209521635)--(-0.11922524640200265,0.041167296086280954)--(-0.11831586778504478,0.040697196218926396)--(-0.11740648916808692,0.04022889949737868)--(-0.11649711055112906,0.03976241289145574)--(-0.1155877319341712,0.039297743452428896)--(-0.11467835331721334,0.038834898314621856)--(-0.11376897470025547,0.03837388469705404)--(-0.11285959608329761,0.03791470990512974)--(-0.11195021746633975,0.03745738133237483)--(-0.11104083884938187,0.03700190646222277)--(-0.11013146023242402,0.03654829286985173)--(-0.10922208161546616,0.03609654822407462)--(-0.10831270299850829,0.035646680289284166)--(-0.10740332438155042,0.035198696927455024)--(-0.10649394576459256,0.03475260610020501)--(-0.1055845671476347,0.034308415870917916)--(-0.10467518853067684,0.03386613440693014)--(-0.10376580991371898,0.033425769981783676)--(-0.10285643129676111,0.032987330977548024)--(-0.10194705267980325,0.03255082588721387)--(-0.10103767406284539,0.03211626331716126)--(-0.10012829544588753,0.031683651989705415)--(-0.09921891682892967,0.03125300074572325)--(-0.0983095382119718,0.030824318547364022)--(-0.09740015959501394,0.030397614480847438)--(-0.09649078097805606,0.029972897759353113)--(-0.0955814023610982,0.02955017772600507)--(-0.09467202374414034,0.02912946385695533)--(-0.09376264512718248,0.02871076576457105)--(-0.09285326651022462,0.028294093200729543)--(-0.09194388789326675,0.027879456060225936)--(-0.09103450927630889,0.02746686438429858)--(-0.09012513065935103,0.0270563283642774)--(-0.08921575204239317,0.026647858345360747)--(-0.0883063734254353,0.026241464830526702)--(-0.08739699480847744,0.025837158484584996)--(-0.08648761619151958,0.02543495013837611)--(-0.08557823757456172,0.025034850793124504)--(-0.08466885895760384,0.0246368716249534)--(-0.08375948034064598,0.024241023989568785)--(-0.08285010172368812,0.023847319427120966)--(-0.08194072310673026,0.023455769667252383)--(-0.0810313444897724,0.023066386634341015)--(-0.08012196587281453,0.022679182452949122)--(-0.07921258725585667,0.022294169453487903)--(-0.07830320863889881,0.02191136017810918)--(-0.07739383002194095,0.021530767386835736)--(-0.07648445140498307,0.02115240406394327)--(-0.07557507278802521,0.020776283424607016)--(-0.07466569417106735,0.020402418921827526)--(-0.07375631555410948,0.020030824253650926)--(-0.07284693693715162,0.019661513370699622)--(-0.07193755832019376,0.01929450048403132)--(-0.0710281797032359,0.01892980007334448)--(-0.07011880108627803,0.018567426895550472)--(-0.06920942246932017,0.018207395993733663)--(-0.06830004385236231,0.017849722706522183)--(-0.06739066523540445,0.017494422677894046)--(-0.06648128661844659,0.017141511867444982)--(-0.06557190800148872,0.016791006561146155)--(-0.06466252938453085,0.01644292338262236)--(-0.06375315076757299,0.016097279304983596)--(-0.06284377215061512,0.015754091663245255)--(-0.06193439353365726,0.015413378167375218)--(-0.0610250149166994,0.015075156916009103)--(-0.06011563629974154,0.01473944641087839)--(-0.059206257682783675,0.014406265571999716)--(-0.05829687906582581,0.014075633753677657)--(-0.05738750044886795,0.013747570761378213)--(-0.05647812183191009,0.013422096869534446)--(-0.05556874321495222,0.013099232840351888)--(-0.05465936459799436,0.01277899994368694)--(-0.053749985981036495,0.012461419978078205)--(-0.05284060736407863,0.012146515293018358)--(-0.05193122874712077,0.01183430881256214)--(-0.05102185013016291,0.011524824060375319)--(-0.05011247151320504,0.01121808518633983)--(-0.04920309289624718,0.010914116994841476)--(-0.048293714279289315,0.010612944974879775)--(-0.04738433566233145,0.01031459533215335)--(-0.04647495704537359,0.010019095023291152)--(-0.04556557842841575,0.0097264717924172)--(-0.04465619981145789,0.009436754210257763)--(-0.0437468211945,0.009149971716022758)--(-0.042837442577542136,0.008866154662319622)--(-0.04192806396058427,0.008585334363388087)--(-0.04101868534362641,0.008307543146978295)--(-0.04010930672666855,0.008032814410233865)--(-0.03919992810971069,0.007761182679986678)--(-0.03829054949275282,0.007492683677921241)--(-0.037381170875794956,0.007227354391126492)--(-0.036471792258837094,0.0069652331486216084)--(-0.03556241364187923,0.006706359704522394)--(-0.03465303502492137,0.0064507753286079115)--(-0.03374365640796351,0.006198522905156514)--(-0.03283427779100564,0.005949647041047929)--(-0.031924899174047776,0.005704194184279291)--(-0.031015520557089914,0.0054622127542219)--(-0.03010614194013205,0.0052237532851575575)--(-0.02919676332317419,0.004988868584888174)--(-0.028287384706216324,0.004757613910516673)--(-0.02737800608925846,0.00453004716386655)--(-0.0264686274723006,0.004306229109454482)--(-0.025559248855342734,0.0040862236184775035)--(-0.02464987023838487,0.0038700979429485026)--(-0.023740491621427034,0.0036579230249465136)--(-0.02283111300446917,0.0034497738469862808)--(-0.021921734387511306,0.0032457298308165455)--(-0.02101235577055342,0.0030458752936106193)--(-0.020102977153595557,0.002850299972628227)--(-0.019193598536637692,0.002659099632159548)--(-0.01828421991967983,0.002472376770129436)--(-0.017374841302721968,0.0022902414464476056)--(-0.016465462685764102,0.002112812261487398)--(-0.01555608406880624,0.0019402175216091642)--(-0.014646705451848378,0.0017725966403887912)--(-0.013737326834890514,0.0016101018406470358)--(-0.01282794821793265,0.0014529002458047466)--(-0.011918569600974788,0.0013011764831948622)--(-0.011009190984016924,0.0011551359727853365)--(-0.01009981236705906,0.001015009152548792)--(-0.009190433750101198,0.0008810570144507334)--(-0.008281055133143334,0.0007535785256865969)--(-0.007371676516185471,0.00063292085179382)--(-0.006462297899227608,0.0005194939108202858)--(-0.005552919282269745,0.00041379195315520857)--(-0.004643540665311881,0.00031642725210312146)--(-0.003734162048354018,0.00022818639179440103)--(-0.002824783431396155,0.00015013354376701765)--(-0.0019154048144383162,0.00008382833920228342)--(-0.001006026197480453,0.00003190905445874816)--(-0.00009664758052258965,9.501375446147487E-7)--(0.0008127310364352735,0.0000231696923519956)--(0.0017221096533931368,0.00007146463860171634)--(0.0026314882703510245,0.00013499017420099595)--(0.003540866887308888,0.00021069993459067138)--(0.004450245504266751,0.00029687659153153183)--(0.005359624121224614,0.000392375154324545)--(0.006269002738182478,0.0004963610439426305)--(0.007178381355140341,0.0006081907211960823)--(0.008087759972098204,0.0007273482206334088)--(0.008997138589056068,0.0008534078146960914)--(0.00990651720601393,0.000986010403514144)--(0.010815895822971794,0.001124847748422728)--(0.011725274439929658,0.0012696514845330022)--(0.01263465305688752,0.001420185196052072)--(0.013544031673845384,0.0015762385365068522)--(0.014453410290803246,0.0017376227614013622)--(0.01536278890776111,0.0019041672647406227)--(0.016272167524718974,0.0020757168466797415)--(0.017181546141676836,0.0022521295250257757)--(0.018090924758634698,0.0024332747588239577)--(0.019000303375592564,0.0026190319893068946)--(0.019909681992550426,0.002809289428815366)--(0.020819060609508288,0.0030039430459921075)--(0.021728439226466154,0.0032028957081454263)--(0.02263781784342399,0.0034060564508007927)--(0.023547196460381854,0.003613339851167068)--(0.02445657507733972,0.003824665487247127)--(0.02536595369429758,0.004039957468102272)--(0.026275332311255468,0.004259144023668591)--(0.027184710928213333,0.004482157144755255)--(0.028094089545171196,0.004708932265596249)--(0.029003468162129058,0.004939407982698158)--(0.029912846779086923,0.005173525804816171)--(0.030822225396044785,0.005411229929762254)--(0.03173160401300265,0.005652467044452557)--(0.03264098262996051,0.005897186145172341)--(0.033550361246918375,0.0061453383755036735)--(0.03445973986387624,0.006396876879744545)--(0.0353691184808341,0.006651756669966287)--(0.03627849709779196,0.006909934505119574)--(0.03718787571474983,0.007171368780820395)--(0.03718787571474983,0.07997650169592732);
\draw[line width=1pt,color=ffvvqq,fill=ffvvqq,fill opacity=0.25]
(-0.00009664758052258965,-9.501375446147487E-7)--(-0.001006026197480453,-0.00003190905445874816)--(-0.0019154048144383162,-0.00008382833920228342)--(-0.002824783431396155,-0.00015013354376701765)--(-0.003734162048354018,-0.00022818639179440103)--(-0.004643540665311881,-0.00031642725210312146)--(-0.005552919282269745,-0.00041379195315520857)--(-0.006462297899227608,-0.0005194939108202858)--(-0.007371676516185471,-0.00063292085179382)--(-0.008281055133143334,-0.0007535785256865969)--(-0.009190433750101198,-0.0008810570144507334)--(-0.01009981236705906,-0.001015009152548792)--(-0.011009190984016924,-0.0011551359727853365)--(-0.011918569600974788,-0.0013011764831948622)--(-0.01282794821793265,-0.0014529002458047466)--(-0.013737326834890514,-0.0016101018406470358)--(-0.014646705451848378,-0.0017725966403887912)--(-0.01555608406880624,-0.0019402175216091642)--(-0.016465462685764102,-0.002112812261487398)--(-0.017374841302721968,-0.0022902414464476056)--(-0.01828421991967983,-0.002472376770129436)--(-0.019193598536637692,-0.002659099632159548)--(-0.020102977153595557,-0.002850299972628227)--(-0.02101235577055342,-0.0030458752936106193)--(-0.021921734387511306,-0.0032457298308165455)--(-0.02283111300446917,-0.0034497738469862808)--(-0.023740491621427034,-0.0036579230249465136)--(-0.02464987023838487,-0.0038700979429485026)--(-0.025559248855342734,-0.0040862236184775035)--(-0.0264686274723006,-0.004306229109454482)--(-0.02737800608925846,-0.00453004716386655)--(-0.028287384706216324,-0.004757613910516673)--(-0.02919676332317419,-0.004988868584888174)--(-0.03010614194013205,-0.0052237532851575575)--(-0.031015520557089914,-0.0054622127542219)--(-0.031924899174047776,-0.005704194184279291)--(-0.03283427779100564,-0.005949647041047929)--(-0.03374365640796351,-0.006198522905156514)--(-0.03465303502492137,-0.0064507753286079115)--(-0.03556241364187923,-0.006706359704522394)--(-0.036471792258837094,-0.0069652331486216084)--(-0.037381170875794956,-0.007227354391126492)--(-0.03829054949275282,-0.007492683677921241)--(-0.03919992810971069,-0.007761182679986678)--(-0.04010930672666855,-0.008032814410233865)--(-0.04101868534362641,-0.008307543146978295)--(-0.04192806396058427,-0.008585334363388087)--(-0.042837442577542136,-0.008866154662319622)--(-0.0437468211945,-0.009149971716022758)--(-0.04465619981145789,-0.009436754210257763)--(-0.04556557842841575,-0.0097264717924172)--(-0.04647495704537359,-0.010019095023291152)--(-0.04738433566233145,-0.01031459533215335)--(-0.048293714279289315,-0.010612944974879775)--(-0.04920309289624718,-0.010914116994841476)--(-0.05011247151320504,-0.01121808518633983)--(-0.05102185013016291,-0.011524824060375319)--(-0.05193122874712077,-0.01183430881256214)--(-0.05284060736407863,-0.012146515293018358)--(-0.053749985981036495,-0.012461419978078205)--(-0.05465936459799436,-0.01277899994368694)--(-0.05556874321495222,-0.013099232840351888)--(-0.05647812183191009,-0.013422096869534446)--(-0.05738750044886795,-0.013747570761378213)--(-0.05829687906582581,-0.014075633753677657)--(-0.059206257682783675,-0.014406265571999716)--(-0.06011563629974154,-0.01473944641087839)--(-0.0610250149166994,-0.015075156916009103)--(-0.06193439353365726,-0.015413378167375218)--(-0.06284377215061512,-0.015754091663245255)--(-0.06375315076757299,-0.016097279304983596)--(-0.06466252938453085,-0.01644292338262236)--(-0.06557190800148872,-0.016791006561146155)--(-0.06648128661844659,-0.017141511867444982)--(-0.06739066523540445,-0.017494422677894046)--(-0.06830004385236231,-0.017849722706522183)--(-0.06920942246932017,-0.018207395993733663)--(-0.07011880108627803,-0.018567426895550472)--(-0.0710281797032359,-0.01892980007334448)--(-0.07193755832019376,-0.01929450048403132)--(-0.07284693693715162,-0.019661513370699622)--(-0.07375631555410948,-0.020030824253650926)--(-0.07466569417106735,-0.020402418921827526)--(-0.07557507278802521,-0.020776283424607016)--(-0.07648445140498307,-0.02115240406394327)--(-0.07739383002194095,-0.021530767386835736)--(-0.07830320863889881,-0.02191136017810918)--(-0.07921258725585667,-0.022294169453487903)--(-0.08012196587281453,-0.022679182452949122)--(-0.0810313444897724,-0.023066386634341015)--(-0.08194072310673026,-0.023455769667252383)--(-0.08285010172368812,-0.023847319427120966)--(-0.08375948034064598,-0.024241023989568785)--(-0.08466885895760384,-0.0246368716249534)--(-0.08557823757456172,-0.025034850793124504)--(-0.08648761619151958,-0.025434950138376107)--(-0.08739699480847744,-0.025837158484584996)--(-0.0883063734254353,-0.026241464830526702)--(-0.08921575204239317,-0.026647858345360747)--(-0.09012513065935103,-0.0270563283642774)--(-0.09103450927630889,-0.02746686438429858)--(-0.09194388789326675,-0.02787945606022593)--(-0.09285326651022462,-0.028294093200729543)--(-0.09376264512718248,-0.02871076576457105)--(-0.09467202374414034,-0.02912946385695533)--(-0.0955814023610982,-0.029550177726005076)--(-0.09649078097805606,-0.029972897759353107)--(-0.09740015959501394,-0.030397614480847445)--(-0.0983095382119718,-0.03082431854736403)--(-0.09921891682892967,-0.031253000745723264)--(-0.10012829544588753,-0.03168365198970541)--(-0.10103767406284539,-0.03211626331716126)--(-0.10194705267980325,-0.03255082588721388)--(-0.10285643129676111,-0.03298733097754803)--(-0.10376580991371898,-0.03342576998178367)--(-0.10467518853067684,-0.03386613440693014)--(-0.1055845671476347,-0.034308415870917916)--(-0.10649394576459256,-0.034752606100205)--(-0.10740332438155042,-0.035198696927455024)--(-0.10831270299850829,-0.035646680289284166)--(-0.10922208161546616,-0.036096548224074605)--(-0.11013146023242402,-0.03654829286985172)--(-0.11104083884938187,-0.03700190646222277)--(-0.11195021746633975,-0.03745738133237482)--(-0.11285959608329761,-0.037914709905129754)--(-0.11376897470025547,-0.038373884697054036)--(-0.11467835331721334,-0.03883489831462186)--(-0.1155877319341712,-0.03929774345242891)--(-0.11649711055112906,-0.039762412891455746)--(-0.11740648916808692,-0.04022889949737868)--(-0.11831586778504478,-0.040697196218926396)--(-0.11922524640200265,-0.04116729608628096)--(-0.12013462501896051,-0.041639192209521635)--(-0.12104400363591837,-0.04211287777710992)--(-0.12195338225287623,-0.04258834605441459)--(-0.1228627608698341,-0.0430655903822752)--(-0.12377213948679197,-0.04354460417560299)--(-0.12468151810374983,-0.04402538092201775)--(-0.1255908967207077,-0.04450791418051958)--(-0.12650027533766556,-0.04499219758019434)--(-0.12740965395462342,-0.04547822481895184)--(-0.12831903257158128,-0.04596598966229544)--(-0.12922841118853914,-0.046455485942122385)--(-0.130137789805497,-0.046946707555553614)--(-0.13104716842245487,-0.047439648463792373)--(-0.13195654703941273,-0.04793430269101044)--(-0.13286592565637062,-0.04843066432326144)--(-0.13377530427332845,-0.04892872750741997)--(-0.13468468289028634,-0.04942848645014643)--(-0.13559406150724418,-0.049929935416875924)--(-0.13650344012420207,-0.05043306873083117)--(-0.1374128187411599,-0.0509192275431028)--(-0.1383221973581178,-0.0509192275431028)--(-0.13923157597507563,-0.0509192275431028)--(-0.14014095459203352,-0.0509192275431028)--(-0.14105033320899138,-0.0509192275431028)--(-0.14124348554432023,-0.0509192275431028)--(0.03718787571474983,-0.0509192275431028)--(0.03718787571474983,-0.007171368780820395)--(0.03627849709779196,-0.006909934505119574)--(0.0353691184808341,-0.006651756669966287)--(0.03445973986387624,-0.006396876879744545)--(0.033550361246918375,-0.0061453383755036735)--(0.03264098262996051,-0.005897186145172341)--(0.03173160401300265,-0.005652467044452557)--(0.030822225396044785,-0.005411229929762254)--(0.029912846779086923,-0.005173525804816171)--(0.029003468162129058,-0.004939407982698158)--(0.028094089545171196,-0.004708932265596249)--(0.027184710928213333,-0.004482157144755255)--(0.026275332311255468,-0.004259144023668591)--(0.02536595369429758,-0.004039957468102272)--(0.02445657507733972,-0.003824665487247127)--(0.023547196460381854,-0.003613339851167068)--(0.02263781784342399,-0.0034060564508007927)--(0.021728439226466154,-0.0032028957081454263)--(0.020819060609508288,-0.0030039430459921075)--(0.019909681992550426,-0.002809289428815366)--(0.019000303375592564,-0.0026190319893068946)--(0.018090924758634698,-0.0024332747588239577)--(0.017181546141676836,-0.0022521295250257757)--(0.016272167524718974,-0.0020757168466797415)--(0.01536278890776111,-0.0019041672647406227)--(0.014453410290803246,-0.0017376227614013622)--(0.013544031673845384,-0.0015762385365068522)--(0.01263465305688752,-0.001420185196052072)--(0.011725274439929658,-0.0012696514845330022)--(0.010815895822971794,-0.001124847748422728)--(0.00990651720601393,-0.000986010403514144)--(0.008997138589056068,-0.0008534078146960914)--(0.008087759972098204,-0.0007273482206334088)--(0.007178381355140341,-0.0006081907211960823)--(0.006269002738182478,-0.0004963610439426305)--(0.005359624121224614,-0.000392375154324545)--(0.004450245504266751,-0.00029687659153153183)--(0.003540866887308888,-0.00021069993459067138)--(0.0026314882703510245,-0.00013499017420099595)--(0.0017221096533931368,-0.00007146463860171634)--(0.0008127310364352735,-0.0000231696923519956)--(-0.00009664758052258965,-9.501375446147487E-7);
\draw[line width=1pt,color=ffvvqq,fill=ffvvqq,fill opacity=0.25]
(-0.19561305022646316,-0.0509192275431028)--(-0.19561305022646316,-0.0509192275431028)--(-0.1947036716095053,-0.0509192275431028)--(-0.19379429299254744,-0.0509192275431028)--(-0.19288491437558958,-0.0509192275431028)--(-0.19197553575863172,-0.0509192275431028)--(-0.19106615714167385,-0.0509192275431028)--(-0.190156778524716,-0.0509192275431028)--(-0.18924739990775813,-0.0509192275431028)--(-0.18833802129080027,-0.0509192275431028)--(-0.1874286426738424,-0.0509192275431028)--(-0.18651926405688454,-0.0509192275431028)--(-0.18560988543992668,-0.0509192275431028)--(-0.1847005068229688,-0.0509192275431028)--(-0.18379112820601096,-0.0509192275431028)--(-0.18288174958905307,-0.0509192275431028)--(-0.1819723709720952,-0.0509192275431028)--(-0.18106299235513734,-0.0509192275431028)--(-0.18015361373817948,-0.0509192275431028)--(-0.17924423512122162,-0.0509192275431028)--(-0.17833485650426376,-0.0509192275431028)--(-0.1774254778873059,-0.0509192275431028)--(-0.17651609927034803,-0.0509192275431028)--(-0.17560672065339017,-0.0509192275431028)--(-0.1746973420364323,-0.0509192275431028)--(-0.17378796341947444,-0.0509192275431028)--(-0.17287858480251658,-0.0509192275431028)--(-0.17196920618555872,-0.0509192275431028)--(-0.17105982756860086,-0.0509192275431028)--(-0.170150448951643,-0.0509192275431028)--(-0.16924107033468513,-0.0509192275431028)--(-0.16833169171772727,-0.0509192275431028)--(-0.1674223131007694,-0.0509192275431028)--(-0.16651293448381155,-0.0509192275431028)--(-0.16560355586685369,-0.0509192275431028)--(-0.16469417724989582,-0.0509192275431028)--(-0.16378479863293796,-0.0509192275431028)--(-0.1628754200159801,-0.0509192275431028)--(-0.1619660413990222,-0.0509192275431028)--(-0.16105666278206437,-0.0509192275431028)--(-0.16014728416510648,-0.0509192275431028)--(-0.15923790554814865,-0.0509192275431028)--(-0.15832852693119076,-0.0509192275431028)--(-0.1574191483142329,-0.0509192275431028)--(-0.15650976969727504,-0.0509192275431028)--(-0.15560039108031717,-0.0509192275431028)--(-0.1546910124633593,-0.0509192275431028)--(-0.15378163384640145,-0.0509192275431028)--(-0.1528722552294436,-0.0509192275431028)--(-0.15196287661248573,-0.0509192275431028)--(-0.15105349799552786,-0.0509192275431028)--(-0.15014411937857,-0.0509192275431028)--(-0.14923474076161214,-0.0509192275431028)--(-0.14832536214465428,-0.0509192275431028)--(-0.14741598352769641,-0.0509192275431028)--(-0.14650660491073855,-0.0509192275431028)--(-0.1455972262937807,-0.0509192275431028)--(-0.14468784767682283,-0.0509192275431028)--(-0.14377846905986497,-0.0509192275431028)--(-0.1428690904429071,-0.0509192275431028)--(-0.14195971182594924,-0.0509192275431028)--(-0.14124348554432023,-0.0509192275431028);
\draw [color=ffvvqq](0.0034360686080312157,-0.0028880673925826606) node[anchor=north west] {$\mathbf{\tilde{E}_k}$};
\draw (-0.13027301339166175,-0.0005081450427821131) node[anchor=north west] {$ \{x_{n+1}=0\}$};
\draw (-0.1233681015082123,0.03005611651324448) node[anchor=north west] {$\mathbf{\mathbf{y_k}}$};
\draw [line width=1pt,domain=-0.19344947284782735:0.03502429833611402] plot(\x,{(-0-0*\x)/0.06229517667336292});
\draw [color=qqqqff](-0.1907057288673889,0.02653642711404229) node[anchor=north west] {$\mathbf{\mathbf{U_k}}$};
\draw (-0.09522305985776165,0.01939666006464065) node[anchor=north west] {$k r_k^{1+s}$};
\draw [line width=1pt] (-0.09896184334002958,0.031131615956817182)-- (-0.03666666666666665,0);
\draw [color=qqwuqq](-0.01863242065405403,0.03886875201755422) node[anchor=north west] {$\mathbf{E_k}$};
\draw [shift={(-0.036666666666664995,0)},line width=1pt,color=qqqqff,fill=qqqqff,fill opacity=0.1]  (0,0) --  plot[domain=2.6799520791315077:3.1415926535897865,variable=\t]({1*0.13319599471855922*cos(\t r)+0*0.13319599471855922*sin(\t r)},{0*0.13319599471855922*cos(\t r)+1*0.13319599471855922*sin(\t r)}) -- cycle ;
\draw [line width=1pt] (-0.15611666543693123,0.05893106823102532)-- (-0.002405590314285467,0);
\draw (-0.07293821285781282,0.042330457253627746) node[anchor=north west] {$2r_k$};
\begin{scriptsize}
\end{scriptsize}
\end{tikzpicture}
\caption{A picture of the argument used in the proof of Proposition \ref{prop.curvaturebound}. In green we have the set $E_k$, in orange $\tilde E_k$, i.e. the reflection of $E_k$ over the plane  $\{x_{n+1}=0\}$ and in blue the set $U_k$}
\end{figure}
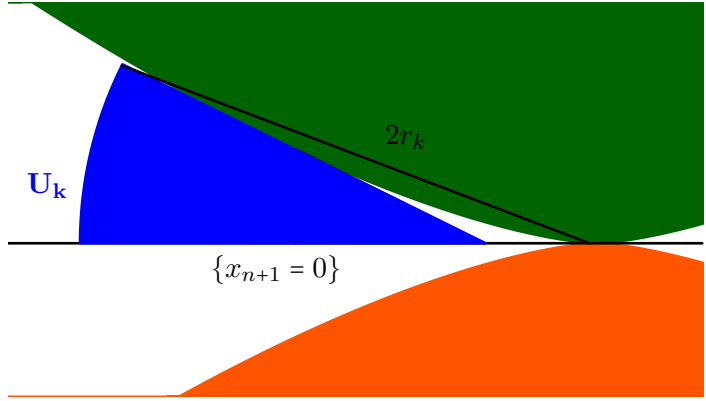

At the end of the section we prove continuity estimates for functions defined via convolution over kernels. We  define elliptic and symmetric matrix field on $\Si^n$, $A(\cdot) : \Si^n \to \R^{n+1} \times \R^{n+1}$ as in  \eqref{def:matrix-elliptic}, i.e., there are $0< \lambda < 1 <\Lambda$ such that 
\[
\lambda |\xi|^2 \leq \la A(x) \xi, \xi \ra \leq \Lambda |\xi|^2 \quad \text{for all } \, \xi \in \R^{n+1 } \, \text{and } \, x \in \Si^n. 
\] 
We consider kernels $K : \Si^n \times \Si^n \setminus \{(x,x) : x \in \Si^n\} \to \R$ of  the  type
\begin{equation}\label{def:kernel}
K_A(y,x) = \frac {1}{\la A(x) (y-x) , (y-x)\ra^{\frac{n+1+s}{2}}} ,
\end{equation}
where $A(\cdot)$ is symmetric, elliptic and $\|A\|_{C^\alpha(\Si^n)} \leq C$.  In order to simplify the notation, we denote  the induced  norm   by 
\begin{equation}\label{def:matrixnorm}
\|\xi\|_{A(x)}^2  := \la A(x) \xi , \xi \ra \qquad \text{for } \, \xi \in \R^{n+1}.
\end{equation}
We may then denote $K$ defined in \eqref{def:kernel} as $K(y,x)  = \|y-x\|_{A(x)}^{-n-1-s}$.

The following lemma is easy to prove and thus we omit it. 
\begin{lemma}
\label{lem:kernel}
Assume that $A(\cdot)$ is symmetric, elliptic and $\|A\|_{C^\alpha(\Si^n)} \leq C$ for $0 <\alpha < \min \{s,1-s \}$. Then the kernel  $K_A$ defined in \eqref{def:kernel} satisfies the following two  conditions:
\begin{itemize}
\item[(i)] $K_A$ is continuous on $ \Si^n \times \Si^n \setminus \{(x,x) : x \in \Si^n\}$ and satisfies
\[
|K_A(y,x)| \leq \frac{C}{|y-x|^{n+1+s}}.
\]
\item[(ii)] For all $x,y,z \in \Si^n$ with $0<|z-x|\leq \frac12 |y-x|$ it holds
\[
|K_A(y,z) -K_A(y,x)| \leq C \frac{|z-x|}{|y-x|^{n+2+s}}  + C \frac{|z-x|^\alpha}{|y-x|^{n+1+s}}.
\]
\end{itemize} 
\end{lemma}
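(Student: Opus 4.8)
The plan is to reduce both assertions to elementary estimates on the scalar quantity $g(x):=\|y-x\|_{A(x)}^2=\la A(x)(y-x),(y-x)\ra$, since by definition $K_A(y,x)=g(x)^{-\beta}$ with $\beta:=\tfrac{n+1+s}{2}$.

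For part (i), ellipticity of $A$ gives $g(x)\geq\lambda|y-x|^2>0$ whenever $y\neq x$, hence $K_A(y,x)=g(x)^{-\beta}\leq\lambda^{-\beta}|y-x|^{-(n+1+s)}$, which is the claimed pointwise bound. Continuity of $K_A$ on $\Si^n\times\Si^n\setminus\{(x,x):x\in\Si^n\}$ is then immediate: $x\mapsto A(x)$ is continuous (indeed $C^\alpha$), the map $(y,x)\mapsto g(x)$ is continuous, and $g>0$ off the diagonal, so $(y,x)\mapsto g(x)^{-\beta}$ is continuous there.

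For part (ii), I would first record the elementary inequality $|a^{-\beta}-b^{-\beta}|\leq\beta\,(\min\{a,b\})^{-\beta-1}|a-b|$ for $a,b>0$, and apply it with $a=g(z)$, $b=g(x)$. Under the hypothesis $|z-x|\leq\tfrac12|y-x|$ one has $\tfrac12|y-x|\leq|y-z|\leq\tfrac32|y-x|$, so ellipticity yields $\min\{g(z),g(x)\}\geq\tfrac{\lambda}{4}|y-x|^2$ and therefore $(\min\{g(z),g(x)\})^{-\beta-1}\leq C|y-x|^{-(n+3+s)}$. It remains to estimate $|g(z)-g(x)|$. Writing
\[
g(z)-g(x)=\la (A(z)-A(x))(y-z),(y-z)\ra+\big(\la A(x)(y-z),(y-z)\ra-\la A(x)(y-x),(y-x)\ra\big),
\]
I bound the first term by $\|A(z)-A(x)\|\,|y-z|^2\leq\|A\|_{C^\alpha(\Si^n)}|z-x|^\alpha|y-z|^2\leq C|z-x|^\alpha|y-x|^2$, and the second term using the polarization identity $\la Ma,a\ra-\la Mb,b\ra=\la M(a-b),a+b\ra$ (valid for symmetric $M$) with $a=y-z$, $b=y-x$, so $a-b=x-z$ and $a+b=(y-x)+(y-z)$; this gives $|\la A(x)(x-z),(y-x)+(y-z)\ra|\leq\Lambda|z-x|\big(|y-x|+|y-z|\big)\leq C|z-x||y-x|$. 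Hence $|g(z)-g(x)|\leq C|z-x|^\alpha|y-x|^2+C|z-x||y-x|$, and multiplying by the bound for $(\min\{g(z),g(x)\})^{-\beta-1}$ yields exactly
\[
|K_A(y,z)-K_A(y,x)|\leq C\frac{|z-x|}{|y-x|^{n+2+s}}+C\frac{|z-x|^\alpha}{|y-x|^{n+1+s}}.
\]

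There is no serious obstacle here—the statement is, as indicated, routine—and the only point requiring a little care is keeping the geometric factors straight: using $|z-x|\leq\tfrac12|y-x|$ in both directions, once to lower-bound $|y-z|$ (so the denominators do not degenerate) and once to absorb $|y-z|$ and $|(y-x)+(y-z)|$ into multiples of $|y-x|$, and checking that the Hölder contribution $|z-x|^\alpha$ and the Lipschitz contribution $|z-x|$ end up on the correct powers of $|y-x|$.
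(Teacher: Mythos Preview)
Your proof is correct. The paper omits the proof of this lemma entirely, remarking only that it is easy; however, the exact decomposition and estimate you use for $|g(z)-g(x)|$ (splitting off the $A(z)-A(x)$ contribution and then applying the polarization identity to the remainder) reappears verbatim later in the paper, in the proof of Lemma~\ref{lem:bound-G-error}, so your argument is precisely the one the authors have in mind.
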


We note that the above conditions agree with the assumptions in \cite[Definition 4.1]{JL} with the difference that the second condition is slightly weaker.
The following lemma is almost the same as \cite[Lemma 4.3]{JL}.
\begin{lemma}
\label{lem:old.4.3}
Assume that $A(\cdot)$ is symmetric, elliptic and $\|A\|_{C^\alpha(\Si^n)} \leq C$ for $0 <\alpha < \min \{s,1-s \}$, and  the kernel  $K_A$ is defined in \eqref{def:kernel}. Assume that the function $F \in C(\Si^n \times \Si^n)$ satisfies the following conditions. 
\begin{enumerate}
\item For all $x,y \in \Si^n$  it holds 
\[
|F(y,x)| \leq \kappa_0 |y-x|^{1+s+\alpha}.
\]
\item For all $x,y,z \in \Si^n$ with $|z-x|\leq \frac12 |y-x|$ it holds
\[
|F(y,z) -F(y,x)| \leq \kappa_0 \big( |z-x|^\alpha |y-x|^{1+s+\alpha/2} + |z-x|^{s+\alpha} |y-x| \big).
\]
\end{enumerate} 
Then the function 
\[
\psi(x) = \int_{\Si^n} F(y,x)K_A(y,x)\, d\H_y^n 
\]
is H\"older continuous and $\|\psi\|_{C^{\alpha}(\Si^n)}\leq C \kappa_0$.
\end{lemma}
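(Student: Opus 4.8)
The plan is the standard near/far decomposition, first establishing the $C^0$ bound and then the $C^\alpha$ seminorm.

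First I would bound $\|\psi\|_{C^0(\Si^n)}$. By hypothesis (1) and by Lemma \ref{lem:kernel}(i),
\[
|F(y,x) K_A(y,x)| \leq C\kappa_0 |y-x|^{1+s+\alpha}|y-x|^{-n-1-s} = C\kappa_0 |y-x|^{\alpha - n},
\]
and since $\alpha >0$ the function $y \mapsto |y-x|^{\alpha-n}$ is integrable on $\Si^n$ with an integral bounded independently of $x$, so $\|\psi\|_{C^0(\Si^n)} \leq C\kappa_0$.

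Next, for the seminorm, fix $x, x' \in \Si^n$ and set $r := |x-x'|$. Since $\Si^n$ has bounded diameter, for $r$ bounded below the estimate $|\psi(x')-\psi(x)| \leq 2\|\psi\|_{C^0} \leq C\kappa_0 r^\alpha$ is immediate, so we may assume $r$ small. Write
\[
\psi(x')-\psi(x) = \int_{\Si^n}\big(F(y,x')K_A(y,x') - F(y,x)K_A(y,x)\big)\, d\H_y^n
\]
and split the domain into the near region $\Si^n \cap B_{4r}(x)$ and the far region $\Si^n \setminus B_{4r}(x)$. On the near region I would estimate the two summands separately using (1) and Lemma \ref{lem:kernel}(i): the $x$-term has integrand $\leq C\kappa_0 |y-x|^{\alpha-n}$, whose integral over $B_{4r}(x)$ is $\leq C\kappa_0 r^\alpha$; the $x'$-term has integrand $\leq C\kappa_0|y-x'|^{\alpha-n}$, and since $\{|y-x|<4r\}\subset\{|y-x'|<5r\}$ by the triangle inequality, its integral is again $\leq C\kappa_0 r^\alpha$.

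For the far region, note that $|y-x|\geq 4r$ gives $|x-x'| = r \leq \tfrac14|y-x| \leq \tfrac12 |y-x|$, so both condition (2) on $F$ and condition (ii) of Lemma \ref{lem:kernel} apply with $z = x'$, and moreover $\tfrac34|y-x| \leq |y-x'| \leq \tfrac54|y-x|$. Decompose
\[
F(y,x')K_A(y,x') - F(y,x)K_A(y,x) = \big(F(y,x')-F(y,x)\big)K_A(y,x') + F(y,x)\big(K_A(y,x')-K_A(y,x)\big).
\]
Using (2), Lemma \ref{lem:kernel}(i) and $|y-x'|\sim|y-x|$, the first summand has integrand bounded by $C\kappa_0\big(r^\alpha|y-x|^{\alpha/2-n} + r^{s+\alpha}|y-x|^{-n-s}\big)$; using (1) and Lemma \ref{lem:kernel}(ii), the second has integrand bounded by $C\kappa_0\big(r|y-x|^{\alpha-1-n} + r^\alpha|y-x|^{\alpha-n}\big)$. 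Integrating each in geodesic polar coordinates over $4r \leq |y-x| \leq C$: the terms $r^\alpha|y-x|^{\alpha/2-n}$ and $r^\alpha|y-x|^{\alpha-n}$ integrate (since $\alpha>0$) to $\leq C\kappa_0 r^\alpha$ directly; the term $r^{s+\alpha}|y-x|^{-n-s}$ integrates to $r^{s+\alpha}\int_{4r}\rho^{-s-1}d\rho \leq C\kappa_0 r^\alpha$; and the term $r|y-x|^{\alpha-1-n}$ integrates to $r\int_{4r}\rho^{\alpha-2}d\rho \leq C\kappa_0 r^\alpha$, using here $\alpha<1$ and $s\in(0,1)$. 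Summing, the far region contributes $\leq C\kappa_0 r^\alpha$, and altogether $|\psi(x')-\psi(x)|\leq C\kappa_0|x-x'|^\alpha$, which together with the $C^0$ bound gives $\|\psi\|_{C^\alpha(\Si^n)}\leq C\kappa_0$.

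\textbf{Main obstacle.} There is no conceptual difficulty; the work is the bookkeeping of exponents. The one point to get right is that conditions (1)--(2) on $F$ are calibrated exactly so that, after multiplication by $K_A$ and by the difference quotient of $K_A$, every resulting power of $|y-x|$ produces an $r$-contribution of order precisely $r^\alpha$. In particular the worst term comes from differentiating the kernel in Lemma \ref{lem:kernel}(ii), which loses a full power $|y-x|^{-1}$; this is compensated exactly by the $|y-x|^{1+s+\alpha}$ vanishing of $F$ in (1), so that $r\int_{4r}\rho^{\alpha-2}\,d\rho$ converges to order $r^{\alpha-1}$ at the lower cutoff and the prefactor $r$ restores the power $r^\alpha$.
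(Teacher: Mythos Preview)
Your proof is correct and follows essentially the same near/far decomposition as the paper's own argument, with the same use of Lemma \ref{lem:kernel}(i)--(ii) and conditions (1)--(2) on $F$. The only cosmetic differences are your choice of cutoff radius $4r$ versus the paper's $2|z-x|$, and your direct integration of the term $r\int_{4r}\rho^{\alpha-2}\,d\rho$ where the paper first dominates $|z-x|/|y-x|^{n+1-\alpha}$ by $|z-x|^{s+\alpha}/|y-x|^{n+s}$ before integrating.
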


\begin{proof}
 The proof is almost the same as in \cite[Lemma 4.3]{JL} but we write it for the reader's convenience since the assumptions are slightly different.   The condition (i) in Lemma \ref{lem:kernel} and the assumption (1) for $F$ immediately implies that $\|\psi\|_{C^0}\leq C \kappa_0$. To show the H\"older continuity we fix $z,x \in \Si^n$ and divide the sphere into two sets as  $S_- = \{y \in  \Si^n :  |y-x| \leq 2 |z-x| \}$ and $S_+ = \{y \in  \Si^n :  |y-x| > 2 |z-x| \}$. Note that for $y \in S_-$ it holds $|y-z| \leq |y-x| + |z-x| \leq 3|z-x|$. We use the condition (i) in Lemma \ref{lem:kernel} and the assumption (1) for $F$ to deduce
\[
\int_{S_-} |F(y,z)K_A(y,z)|\, d\H_y^n  \leq  C \kappa_0 \int_{S_-} \frac{1}{|y-z|^{n-\alpha}}\, d \H_y^n \leq C \kappa_0 \int_{0}^{3|z-x|}\rho^{\alpha-1} \, d \rho = C\kappa_0 |z-x|^\alpha.
\]
Similarly it holds $\int_{S_-} |F(y,x)K_A(y,x)|\, d\H_y^n  \leq  C\kappa_0 |z-x|^\alpha$. 

Note that for $y \in S_+$ it holds $|y-z| \geq |y-x|- |z-x| \geq \frac12 |y-x|$. We use the conditions (i) and (ii) in Lemma \ref{lem:kernel}  and the assumption (1) and (2) for $F$ to estimate
\[
\begin{split}
\int_{S_+} &|F(y,z)K_A(y,z) - F(y,x)K_A(y,x)|\, d\H_y^n \\
&\leq \int_{S_+} |F(y,z) - F(y,x)| |K_A(y,z) |\, d\H_y^n + \int_{S_-} |F(y,x)| |K_A(y,z) - K_A(y,x) |\, d\H_y^n\\
&\leq C\kappa_0 |z-x|^\alpha \int_{S_+} \frac{1}{|y-x|^{n -\alpha/2}}\, d\H_y^n + C\kappa_0 |z-x|^{s+\alpha} \int_{S_+} \frac{1}{|y-x|^{n +s}}\, d\H_y^n\\ 
 &\,\,\,\,\,\,\,\,\,\,\,\,\,\,\,\,\,\,+ C\kappa_0 |z-x| \int_{S_+} \frac{1}{|y-x|^{n +1 -\alpha}}\, d\H_y^n. 
\end{split}
\]
Since $\frac{1}{|y-x|^{n -\alpha/2}}$ is not singular, we obtain $\int_{S_+} \frac{1}{|y-x|^{n -\alpha/2}}\, d\H_y^n \leq C$. Since $|z-x|\leq |y-x|$ we estimate  the last term by 
\[
\frac{|z-x|}{|y-x|^{n +1 -\alpha}}\leq \frac{|z-x|^{s+\alpha}}{|y-x|^{n +s}} 
\]
which then implies
\[
|z-x| \int_{S_+} \frac{1}{|y-x|^{n +1 -\alpha}}\, d\H_y^n \leq |z-x|^{s+\alpha} \int_{S_+} \frac{1}{|y-x|^{n +s}}\, d\H_y^n \leq C|z-x|^{s+\alpha}   \int_{|z-x|}^2 \rho^{-s-1} \leq  C |z-x|^{\alpha}.  
\] 
Therefore we have 
\[
\int_{S_+} |F(y,z)K_A(y,z) - F(y,x)K_A(y,x)|\, d\H_y^n \leq C \kappa_0 |z-x|^\alpha
\]
and the claim follows. 
\end{proof}

\begin{remark} \label{rem:lem2.5-eucl}
The above lemma holds also in $\R^n$ for a kernel defined as in \eqref{def:kernel}, if we change the assumptions on $F : \R^n\times \R^n \to \R$ as follows. 
\begin{enumerate}
\item For all $x \in \R^n$  it holds 
\[
|F(y,x)| \leq \kappa_0 \min\{|y-x|, 1\}^{1+s+\alpha}.
\]
\item For all $x,y,z \in \R^n$ with $|z-x|\leq \frac12 |y-x|$ it holds
\[
|F(y,z) -F(y,x)| \leq \kappa_0 \big( |z-x|^\alpha \min\{|y-x|, 1\}^{1+s+\alpha/2} + |z-x|^{s+\alpha} \min\{|y-x|, 1\} \big).
\]
\end{enumerate} 

\end{remark}

\section{Equation for the height function}

As we mentioned in the introduction, instead of the original equation \eqref{eq:the-flow}, we will differentiate it with respect to space and parametrize the equation
\begin{equation}\label{eq:diff-flow}
\nabla_{X(t)} V_t = - \nabla_{X(t)} H_{E_t^s}  \qquad \text{on } \, \pa E_t,
\end{equation}
where $X(t)$ is a vector field on the moving boundary $\pa E_t$, which we will define shortly, and $\nabla_{X(t)}$ is the derivative in the direction of $X(t)$. Note that the derivative of the integral average $\bar H_{E_t}^s$ vanishes. 

We parametrize the flow using the height function similarly as in \eqref{eq:height}, with the difference  that we assume we may write locally in time  
\[
\pa E_t = \{ h(x,t) x + x_0 : x \in \Si^n\}, 
\]
for some fixed $x_0 \in \R^{n+1}$ and for $t \in [0,T)$. We denote 
\[
\eta(x,t) := h(x,t) x.  
\] 
 We choose the vector field $X(t)$ as follows. Fix a basis vector $e_i,\,  i=1, \dots, n+1$ of the ambient space $\R^{n+1}$. We obtain a tangent field on $\Si^n$ as
\[
\tau_i(x) = e_i - x_i \, x, \qquad \text{for } \, x \in \Si^n, 
\]
where $x_i = \la x,  e_i \ra$.   This induces a tangent field on the moving boundary $\pa E_t$, which we denote by $X(t)= X(\cdot, t)$,  defined as
\[
X(\eta(x,t), t) := D \eta(x,t) \tau_i(x) = h(x,t)  \tau_i(x)  +\nabla_i h(x,t) x \qquad \text{for }  \, x \in \Si^n.
\]
We may then write the derivative $\nabla_{X(t)} f$ of any given function $f \in C^1(\partial E_t)$ at the point $\eta(x,t) \in \partial E_t$ for $x \in \Si^n$ as
\[
(\nabla_{X(t)} f)(\eta(x,t)) = \nabla_i (f\circ \eta(\cdot, t)) (x) .
\]
We will assume throughout the section that the height function satisfies the uniform bound $\|h(\cdot, t)\|_{C^{1+s}}\leq C$ and $c \leq h(x,t) \leq C$  for all $t$ and $x \in \Si^n$. 

Let us first deal the LHS of \eqref{eq:diff-flow}. This is rather easy as we may write the normal as
\[
\nu_{E_t}(\eta(x,t)) = \frac{h(x,t) x - \nabla_\tau h(x,t)}{\sqrt{h^2(x,t) + |\nabla_\tau h(x,t)|^2}} 
\] 
and the normal velocity then is 
\begin{equation}\label{eq:normal}
V_t(\eta(x,t)) = \pa_t h(x,t) \, \frac{h(x,t)}{\sqrt{h^2(x,t) + |\nabla_\tau h(x,t)|^2}}. 
\end{equation}
Hence, it holds 
\begin{equation}\label{eq:normal-deri}
\begin{split}
(\nabla_{X(t)} V_t) (\eta(x,t)) &= \nabla_i \left(\pa_t h(x,t) \, \frac{h(x,t)}{\sqrt{h^2(x,t) + |\nabla_\tau h(x,t)|^2}}\right)\\
&=    \pa_t  \nabla_i h(x,t) \, \frac{h(x,t)}{\sqrt{h^2(x,t) + |\nabla_\tau h(x,t)|^2}} \\
&\,\,\,\,\,\,\,\,\,\,\,\,\,\,\,+ \pa_t h(x,t) \nabla_i \left(\frac{h(x,t)}{\sqrt{h^2(x,t) + |\nabla_\tau h(x,t)|^2}}\right) .
\end{split}
\end{equation}

Let us then focus on the RHS of \eqref{eq:diff-flow}. We recall that following the argument in  \cite{FFMMM, SaeV} we may write 
\begin{equation}\label{eq:deri-MC}
\nabla_{X(t)} H_{E_t}^s  = 2 \int_{\pa E_t}   
 \frac{\la X(t), \nu_{E_t}(y) \ra}{|y-x|^{n+1+s}} \, d\H_y^n.
\end{equation}
Indeed, by regularizing the kernel, say by $K_\delta(t) = (t^2 +\delta)^{-\frac{n+1+s}{2}}$, and  differentiating in $x_i$ direction 
(see \cite[Formula (6.17)]{FFMMM})  we have 
\[
\begin{split}
\nabla_{x_i} &\left(\int_{E_t^c} K_{\delta}(|y-x|) \, dy   - \int_{E_t} K_{\delta}(|y-x|) \, dy  \right)\\
&=  - \int_{E_t^c} K_{\delta}'(|y-x|) \frac{\langle y-x, e_i\rangle }{|y-x|} \, dy   +  \int_{E_t} K_{\delta}'(|y-x|) \frac{\langle y-x, e_i\rangle }{|y-x|} \, dy  \\
&= - \int_{E_t^c} \text{div}_y (K_{\delta}(|y-x|)e_i) \, dy   +  \int_{E_t} \text{div}_y (K_{\delta}(|y-x|)e_i) \, dy  \\
&= \int_{\pa E_t} 2K_{\delta}(|y-x|) \langle e_i, \nu_{E_t}(y)\rangle   \, d\H_y^n,
\end{split}
\]
where the last equality follows from divergence theorem. Letting $\delta \to 0$ yields \eqref{eq:deri-MC}. 

We  parametrize  \eqref{eq:deri-MC} on $\Si^n$, by recalling the formula for the  normal $\nu_{E_t}(\eta(x,t))$ above  and the Jacobian determinant of $\eta$ as
\[
J_{\eta(\cdot, t)}(x) = h(x,t)^{n-1}  \sqrt{h^2(x,t) + |\nabla_\tau h(x,t)|^2}.
\]
 For simplicity we omit the time-dependence for a moment and write $h(x) = h(x,t)$. Then we may write
\[
\nabla_{X(t)} H_{E_t}^s  =2 \int_{\Si^n} h(y)^{n-1}\frac{\la h(x)\tau_i(x) + \nabla_i h(x) x, h(y)y - \nabla_\tau h(y) \ra }{|h(y) y - h(x)x|^{n+1+s}} \, d\H_y^n 
\] 
We continue and write  the terms as 
\[
\begin{split}
 h(x)\tau_i(x) + \nabla_i h(x) x =  &h(y)\tau_i(y) + \nabla_i h(x) y  \\
&+  \big(h(x)\tau_i(x) -  h(y)\tau_i(y) \big)  + ( x-y ) \nabla_i h(x).
\end{split}
\]
This gives us the following expression
\[
\begin{split}
\nabla_{X(t)} H_{E_t}^s  = 2 \int_{\Si^n} h(y)^{n-1}\frac{\la h(y)\tau_i(y) + \nabla_i h(x) y , h(y)y - \nabla_\tau h(y) \ra }{|h(y) y - h(x)x|^{n+1+s}} \, d\H_y^n  +R_1
\end{split}
\]
where the error terms are 
\[
\begin{split}
R_1  &= 2 \int_{\Si^n}\frac{\la h(x)\tau_i(x) -  h(y)\tau_i(y)  , h(y)y - \nabla_\tau h(y) \ra }{|h(y) y - h(x)x|^{n+1+s}}   h(y)^{n-1} \, d\H_y^n \\
&+ 2 \int_{\Si^n} \frac{\la  x-y   , h(y)y - \nabla_\tau h(y) \ra }{|h(y) y - h(x)x|^{n+1+s}}  \nabla_i h(x)  h(y)^{n-1} \, d\H_y^n .
\end{split}
\]
Note that we may write the error terms as 
\[
R_1 = \sum_k a_k(x,h,\nabla_\tau h) \int_{\Si^n} \frac {F_k (y,h(y))    -F_k(x,h(x))}{|h(y)y - h(x)x|^{n+1+s}}  b_k(y,h,\nabla_\tau h)\, d\H_y^n
\]
for smooth functions $F_k, a_k, b_k$.  But since 
\[
\la h(y)\tau_i(y) + \nabla_i h(x) y , h(y)y - \nabla_\tau h(y) \ra =  \big(\nabla_i h(x)  - \nabla_i h(y) \big) h(y)
\]
we obtain 
\begin{equation} \label{eq:deriv-RHS}
-\nabla_{X(t)} H_{E_t}^s  =2 \int_{\Si^n} \frac{\nabla_i h(y) - \nabla_i h(x) }{|h(y) y - h(x)x|^{n+1+s}} h(y)^{n} \, d\H_y^n  + R_1.
\end{equation}

We  further linearize the first term on the RHS of \eqref{eq:deriv-RHS}, which is the leading order term in the equation.  We  write  using $\la x-y, y \ra = -\frac12 |x-y|^2$
\[
\begin{split}
|h(y)y -h(x)x |^2 &= |h(x)(x-y) + (h(x) -h(y))y|^2 \\
&= h^2(x)|y-x|^2 + h(x)(h(y) -h(x))|x- y|^2 + |h(y)-h(x)|^2 .
\end{split}
\]
We write the Taylor expansion of $h$ at $x$  as
\[
T_x[h](y) = h(y) -h(x) - \la \nabla_\tau h(x) ,  y-x \ra
\]
and note that for $\beta \in [0,1]$ it holds 
\beq\label{eq:taylor1}
|T_x[h](y)| \leq \|h\|_{C^{1+\beta}}|y-x|^{1+\beta}.
\eeq
Therefore we deduce that we may write  
\beq\label{eq:split-by-g}
\begin{split}
|h(y)y -h(x)x |^2 &= h^2(x)|y-x|^2 + \la  \nabla_\tau h(x),  y-x\ra^2 + g_h(y,x)\\
&= \|x-y\|_{A(x)}^2+ g_h(y,x)
\end{split}
\eeq
where  the matrix is given by tensor product $A(x) = h^2(x)I + \nabla_\tau h(x) \otimes \nabla_\tau h(x)$, the norm $\|\xi \|_{A(x)}$ is   defined in \eqref{def:matrixnorm}, and the error term is
\[
g_h(y,x) = h(x)(h(y) -h(x))|x- y|^2 + 2(h(y) -h(x)) T_x[h](y)  - |T_x[h](y)|^2.  
\]
The precise form of $g_h$ is not  relevant. Instead, it is important to notice that it satisfies the following two conditions.
 For all $x,y \in \Si^n$ and $0 < \alpha <\min\{s, 1-s \}$ it holds 
\begin{equation}\label{eq:g_h-1}
|g_h(y,x)| \leq C\|h\|_{C^{1+s+\alpha}} |y-x|^{2+s+ \alpha}
\end{equation}
 and for all $x,y,z \in \Si^n$ with $|z-x|\leq \frac12 |y-x|$ it holds
\begin{equation}\label{eq:g_h-2}
|g_h(y,z) -g_h(y,x)| \leq C\|h\|_{C^{1+s+\alpha}} |z-x|^{s+\alpha} |y-x|^2 .
\end{equation}
Indeed, the estimate \eqref{eq:g_h-1} follows by using  \eqref{eq:taylor1} for $\beta = 0$ and $\beta = s+\alpha$, and  recalling that  $\|h\|_{C^{1+s}} \leq C$ . The estimate \eqref{eq:g_h-2} follows from the fact that for $|z-x|\leq \frac12 |y-x|$ it holds
\begin{equation}\label{eq:g_h-3}
\begin{split}
\big|T_z[h](y) -T_x[h](y) \big|   &= \big| \big( h(y) - h(z)  - \la  \nabla_\tau h(z) , y-z\ra  \big)-  \big(h(y) - h(x)  - \la  \nabla_\tau h(x) , y-x\ra \big) \big| \\
&= \big| \big(h(x) - h(z)  - \la \nabla_\tau h(z) , x-z \ra\big) + \big( \la \nabla_\tau h(x) - \nabla_\tau h(z) , y-x \ra  \big) \big| \\
 &\leq   \|h\|_{C^{1+s+\alpha}}  |z-x|^{1+s+\alpha} + \|h\|_{C^{1+s+\alpha}}  |z-x|^{s+\alpha} |y-x| \\
 &\leq  2 \|h\|_{C^{1+s+\alpha}}  |z-x|^{s+\alpha}|y-x|.
\end{split}
\end{equation}
Therefore we use the notation for $g_h(y,x)$, which is defined in \eqref{eq:split-by-g}, and the norm   $\|\xi \|_{A(x)}$, which is   defined in \eqref{def:matrixnorm}, and write  
\begin{equation}\label{eq:deri-linear}
\begin{split}
\int_{\Si^n} &\frac{\nabla_i h(y) - \nabla_i h(x) }{|h(y) y - h(x)x|^{n+1+s}}h(y)^{n}  \, d\H_y^n 
\\
&= \int_{\Si^n}  \frac{\nabla_i h(y) - \nabla_i h(x) }{\|y-x\|_{A(x)}^{n+1+s} } h(y)^{n} \, d\H_y^n \\
&+\int_{\Si^n} \frac{\nabla_i h(y) - \nabla_i h(x) }{\|y-x\|_{A(x)}^{n+1+s}} \int_0^1 \frac{d}{d\mu } \left( \frac{\|y-x\|_{A(x)}^{2}}{\|y-x\|_{A(x)}^{2}+ \mu g_h(y,x)}\right)^{\frac{n+1+s}{2}} h(y)^{n}  \,d \mu \, d\H_y^n ,
\end{split}
\end{equation}
where $A(x) = h^2(x)I + \nabla_\tau h(x) \otimes \nabla_\tau h(x)$.  

Recall again the a priori estimates for the height function, i.e., 
\[
\sup_{t \in [0,T)} \|h(\cdot, t))\|_{C^{1+s}} \leq C \quad \text{and}\quad h(x,t) \geq c.
\]
We may thus finally write the equation \eqref{eq:diff-flow} by \eqref{eq:normal-deri}, \eqref{eq:deriv-RHS} and  \eqref{eq:deri-linear}  as 
\begin{equation}
\label{eq:the-pde-1}
\partial_t \nabla_i h =  L_{A}[\nabla_i h]   + R_0(x,t) +R_1(x,t) +   \pa_t h   \, B(x,h, \nabla_\tau h):\nabla_\tau^2 h.
\end{equation}
Here $B$ is a smooth matrix-field and  the linear  operator is defined as
\begin{equation}
\label{def:linear-op}
 L_{A}[u](x) := a_0(x,t) \int_{\Si^n} \frac {u  (y)    -u (x)}{\|y-x\|_{A(x,t)}^{n+1+s}}  b_0(y,t)\, d\H_y^n,
\end{equation}
for symmetric  elliptic matrix field 
\[
A(x,t) = h^2(x,t)I + \nabla_\tau h(x,t) \otimes \nabla_\tau h(x,t)
\]
and the coefficients  $a_0, b_0 \geq c_0$ satisfy  
\begin{equation}
\label{eq:coeff}
\sup_{t < T}\|a_0(\cdot,t)\|_{C^{s}(\Si^n)} \leq C \quad \text{and} \quad \sup_{t < T}\|b_0(\cdot,t)\|_{C^{1}(\Si^n)} \leq C.
\end{equation}

The error terms are 
\begin{equation}
\label{eq:error-R-0}
R_0(x,t) = a_0(x,t)  \int_{\Si^n}\int_0^1   \frac{\nabla_i h(y,t) - \nabla_i h(x,t) }{\| y-x\|_{A(x,t)}^{n+1+s} } \frac{d}{d\mu } \left( \frac{\| y-x\|_{A(x,t)}^{2}}{\| y-x\|_{A(x,t)}^{2} + \mu g_{h}(y,x)}\right)^{\frac{n+1+s}{2}}  b_0(y,t) \, d\mu d\H_y^n
\end{equation}
and
\begin{equation}
\label{eq:error-R}
R_1(x,t) = \sum_{k=1}^{N} a_k(x,h,\nabla_\tau h) \int_{\Si^n} \frac {F_k (y,h(y,t))    -F_k(x,h(x,t))}{|h(y,t)y - h(x,t)x|^{n+1+s}}  b_k(y,h,\nabla_\tau h)\, d\H_y^n, 
\end{equation}
where  $a_k, b_k$ and $F_k$ for $k \geq 1$  are smooth functions.

Next we state  the following technical lemma, which we use later to bound the error term $R_0$ defined in \eqref{eq:error-R-0}. 
\begin{lemma}
\label{lem:bound-G-error}
Assume that $A(\cdot)$ is symmetric elliptic matrix field as defined in \eqref{def:matrix-elliptic} such that $\|A(\cdot)\|_{C^\alpha} \leq C$ for $0 < \alpha < \min\{s,1-s \}$ and assume that $g : \Si^n \times \Si^n \to \R$ satisfies the following two conditions. 
For all $x,y \in \Si^n$  it holds $\| y-x\|_{A(x)}^2 + g(y,x) \geq c| y-x|^2$, for some $c>0$, and 
\[
|g(y,x)| \leq \kappa  |y-x|^{2+s+ \alpha},
\]
 and for all $x,y,z \in \Si^n$ with $|z-x|\leq \frac12 |y-x|$ it holds
\[
|g(y,z) -g(y,x)| \leq \kappa (|z-x|^{s+\alpha} |y-x|^2 + |z-x|^{\alpha} |y-x|^{2+s+\alpha}).
\]
Denote  $G(y,x) =  \frac{d}{d\mu } \left( \frac{\|y-x\|_{A(x)}^2 }{\|y-x\|_{A(x)}^2+ \mu g(y,x)}\right)^{q} $, for $\mu \in (0,1), q \geq 1$, and assume $v_1 \in C^{1}(\Si^n)$, $v_2 \in C(\Si^n)$ and $v_3 \in C^{\alpha}(\Si^n)$. 
Then the function 
\[
\psi(x) = \int_{\Si^n}\big(v_1(y) - v_1(x)\big) v_2(y) v_3(x)G(y,x) K_A(y,x)\, d\H_y^n 
\]
is H\"older continuous and 
\[
\|\psi\|_{C^{\alpha}(\Si^n)}\leq C\kappa  \|v_1\|_{C^{1}(\Si^n)}\|v_2\|_{C^{0}(\Si^n)}\|v_3\|_{C^{\alpha}(\Si^n)}.
\]
\end{lemma}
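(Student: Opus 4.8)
The plan is to reduce the statement to Lemma~\ref{lem:old.4.3}. I first compute $G$ explicitly. Writing $t=\|y-x\|_{A(x)}^{2}$ and $g=g(y,x)$, differentiation gives, uniformly in $\mu\in(0,1)$,
\[
G(y,x)=\frac{d}{d\mu}\Big(\frac{t}{t+\mu g}\Big)^{q}=-\,q\,g(y,x)\,\frac{\|y-x\|_{A(x)}^{2q}}{\big(\|y-x\|_{A(x)}^{2}+\mu g(y,x)\big)^{q+1}}.
\]
By ellipticity $\|y-x\|_{A(x)}^{2}\simeq|y-x|^{2}$, and since $|g(y,x)|\le\kappa|y-x|^{2+s+\alpha}$ is of strictly higher order, the denominator is comparable to $|y-x|^{2(q+1)}$ whenever $\kappa|y-x|^{s+\alpha}$ is small; for $g$ of the form in \eqref{eq:split-by-g} this comparability in fact holds on all of $\Si^{n}\times\Si^{n}$, since there the denominator is the convex combination $(1-\mu)\|y-x\|_{A(x)}^{2}+\mu|h(y)y-h(x)x|^{2}$ of two quantities that are both $\gtrsim|y-x|^{2}$ because $h$ is bounded below. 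Consequently
\[
|G(y,x)|\le C\kappa\,|y-x|^{s+\alpha}\qquad\text{on }\Si^{n}\times\Si^{n},
\]
so in particular $G$ extends continuously by $G(x,x)=0$.

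Next set $F(y,x):=\big(v_{1}(y)-v_{1}(x)\big)v_{2}(y)v_{3}(x)G(y,x)$, so that $\psi(x)=\int_{\Si^{n}}F(y,x)K_{A}(y,x)\,d\H_{y}^{n}$ with $K_{A}$ a kernel of the type \eqref{def:kernel}. It is then enough to verify the two hypotheses of Lemma~\ref{lem:old.4.3} for $F$ with constant $\kappa_{0}=C\kappa\|v_{1}\|_{C^{1}(\Si^{n})}\|v_{2}\|_{C^{0}(\Si^{n})}\|v_{3}\|_{C^{\alpha}(\Si^{n})}$, since the conclusion $\|\psi\|_{C^{\alpha}(\Si^{n})}\le C\kappa_{0}$ is exactly the claim. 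The pointwise bound $|F(y,x)|\le\kappa_{0}|y-x|^{1+s+\alpha}$ follows at once from $|v_{1}(y)-v_{1}(x)|\le\|v_{1}\|_{C^{1}}|y-x|$, the $C^{0}$ bounds on $v_{2},v_{3}$ and the bound on $G$. For the oscillation hypothesis fix $x,y,z$ with $|z-x|\le\tfrac12|y-x|$ and telescope
\begin{align*}
F(y,z)-F(y,x)&=\big(v_{1}(x)-v_{1}(z)\big)v_{2}(y)v_{3}(z)G(y,z)+\big(v_{1}(y)-v_{1}(x)\big)v_{2}(y)\big(v_{3}(z)-v_{3}(x)\big)G(y,z)\\
&\quad+\big(v_{1}(y)-v_{1}(x)\big)v_{2}(y)v_{3}(x)\big(G(y,z)-G(y,x)\big).
\end{align*}
The first term is $\le C\kappa_{0}|z-x|\,|y-x|^{s+\alpha}$ using $|G(y,z)|\le C\kappa|y-z|^{s+\alpha}\le C\kappa|y-x|^{s+\alpha}$, and since $|z-x|\le|z-x|^{s+\alpha}|y-x|^{1-s-\alpha}$ this is $\le C\kappa_{0}|z-x|^{s+\alpha}|y-x|$; the second term is $\le C\kappa_{0}|z-x|^{\alpha}|y-x|^{1+s+\alpha}\le C\kappa_{0}|z-x|^{\alpha}|y-x|^{1+s+\alpha/2}$. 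Both are of the form required by Lemma~\ref{lem:old.4.3}.

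The core of the proof is the third term, for which I need $|G(y,z)-G(y,x)|\le C\kappa\big(|z-x|^{\alpha}|y-x|^{s+\alpha/2}+|z-x|^{s+\alpha}\big)$. Writing $G(y,x)=g(y,x)\Psi(y,x)$ with $\Psi(y,x)=-q\|y-x\|_{A(x)}^{2q}\big(\|y-x\|_{A(x)}^{2}+\mu g(y,x)\big)^{-q-1}$, I split $G(y,z)-G(y,x)=\big(g(y,z)-g(y,x)\big)\Psi(y,z)+g(y,x)\big(\Psi(y,z)-\Psi(y,x)\big)$. The first piece is handled by the difference hypothesis on $g$ together with $|\Psi(y,z)|\le C|y-x|^{-2}$. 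For the second piece I use $|\Psi(y,z)-\Psi(y,x)|\le C|y-x|^{-4}\big(|t_{z}-t_{x}|+|g(y,z)-g(y,x)|\big)$ with $t_{x}=\|y-x\|_{A(x)}^{2}$, where
\[
|t_{z}-t_{x}|=\big|\,\|y-z\|_{A(z)}^{2}-\|y-x\|_{A(x)}^{2}\,\big|\le C\big(|z-x|^{\alpha}|y-x|^{2}+|z-x|\,|y-x|\big)
\]
is obtained by separating the variation of the matrix (which costs $\|A\|_{C^{\alpha}}|z-x|^{\alpha}$) from the variation of the argument; multiplying by $|g(y,x)|\le\kappa|y-x|^{2+s+\alpha}$ and collecting powers of $|y-x|$ (using $|z-x|\le|y-x|$ and $|y-x|\le\operatorname{diam}\Si^{n}$) places every contribution into one of the two required forms. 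The \emph{main obstacle} is exactly this last estimate: two of the resulting terms are nominally quadratic in $\kappa$ (they arise from differentiating $\Psi$ in its $g$‑slot), and one absorbs them by using that $\kappa|y-x|^{s+\alpha}$ is bounded precisely on the near‑diagonal range where the oscillation bound has to be sharp; so careful bookkeeping of the $\kappa$‑dependence, hand in hand with controlling how the denominator $\|y-x\|_{A(x)}^{2}+\mu g(y,x)$ moves under $x\mapsto z$, is the delicate part. With hypotheses (1) and (2) of Lemma~\ref{lem:old.4.3} established, that lemma yields $\|\psi\|_{C^{\alpha}(\Si^{n})}\le C\kappa_{0}=C\kappa\|v_{1}\|_{C^{1}(\Si^{n})}\|v_{2}\|_{C^{0}(\Si^{n})}\|v_{3}\|_{C^{\alpha}(\Si^{n})}$, which completes the proof.
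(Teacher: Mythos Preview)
Your proof is correct and follows the same route as the paper: establish the pointwise bound $|G(y,x)|\le C\kappa|y-x|^{s+\alpha}$ and an oscillation bound on $G$, set $F(y,x)=(v_1(y)-v_1(x))v_2(y)v_3(x)G(y,x)$, verify the two hypotheses of Lemma~\ref{lem:old.4.3} via the same telescoping, and conclude. The only difference is in how the oscillation of $G$ is organized: the paper writes $G(y,x)=\Phi'(\mu\tilde G(y,x))\,\tilde G(y,x)$ with $\Phi(t)=(1+t)^{-q}$ and $\tilde G(y,x)=g(y,x)/\|y-x\|_{A(x)}^{2}$, so that smoothness of $\Phi$ reduces everything to bounds on the simple quotient $\tilde G$, whereas your factorization $G=g\Psi$ leaves $\Psi$ depending on $g$ and produces the ``nominally quadratic in $\kappa$'' terms you flag; both organizations lead to the same estimates, and you in fact address the comparability of the denominator $\|y-x\|_{A(x)}^{2}+\mu g(y,x)$ to $|y-x|^{2}$ more explicitly than the paper does.
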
 
\begin{proof}
Let us  first  prove that  it holds for all $y,x \in \Si^n, y\neq x$
\begin{equation}\label{eq:F_h-1}
|G(y,x)|\leq C \kappa  |y-x|^{s+ \alpha}
\end{equation}
and for all $x,y,z \in \Si^n$ with $0 < |z-x|\leq \frac12 |y-x|$ 
\begin{equation}\label{eq:F_h-2}
|G(y,z) -G(y,x)| \leq C\kappa \big( |z-x|^{\alpha}|y-x|^{s+\alpha} + |z-x|^{s+\alpha} \big).
\end{equation}

To this aim we denote  $\Phi(t) = (1+t)^{-q}$,
\[
\tilde G(y,x) = \frac{g(x,y)}{\|y-x\|_{A(x)}^2}
\]
and notice that we may write $G(y,x) = \Phi'(\mu \tilde G(y,x) ) \tilde G(y,x) $. Since $\Phi$ is smooth we deduce that it is enough to show that $\tilde G$ satisfies the conditions \eqref{eq:F_h-1} and \eqref{eq:F_h-2}. We obtain \eqref{eq:F_h-1} immediately  from the ellipticity of $A$ and from the first assumption on $g$ as
\[
|\tilde G(y,x)| \leq C \kappa \frac{ |y-x|^{2+s+ \alpha}}{|y-x|^2}\leq C \kappa  |y-x|^{s+ \alpha}.
\]

To prove \eqref{eq:F_h-2} we fix $x,y,z \in \Si^n$ with $0 < |z-x|\leq \frac12 |y-x|$. Note that then $\frac12 |y-x| \leq |y-z|\leq 2 |y-x|$. We then write 
\[
\begin{split}
|\tilde G(y,z) - \tilde G(y,x)| \leq |g(y,z)| \big|  \|y-z\|_{A(z)}^{-2}   - \|y-x\|_{A(x)}^{-2}  \big| + \frac{|g(x,z) - g(y,z)|}{\|y-x\|_{A(x)}^2 }.
\end{split}
\] 
We may estimate the last term by the second condition on $g$ as
\[
\frac{|g(x,z) - g(y,z)|}{\|y-x\|_{A(x)}^{2}} \leq C \kappa \frac{|z-x|^{s+\alpha} |y-x|^2 + |z-x|^{\alpha}|y-x|^{2+s+\alpha} }{|y-x|^{2}} \leq C \kappa (|z-x|^{s+\alpha}  +  |z-x|^{\alpha} |y-x|^{s+\alpha} ).
\]
For the first term we observe that 
\[
\begin{split}
\big|\|y-z\|_{A(z)}^2 &- \|y-x\|_{A(x)}^2 \big| \\
&=  \big|\la A(z) (y-z), (y-z) \ra  - \la A(x) (y-x), (y-x) \ra \big|\\
&\leq \big| \la (A(z) - A(x)) (y-z), (y-z) \ra \big| + \big| \la A(x)\big((y-z) + (y-x)\big) , (x-z) \ra\big| \\
&\leq C |z-x|^\alpha |y-x|^2 + C   |y-x||z-x|. 
\end{split}
\]
We use the fact that $|z-x|\leq \frac12 |y-x|$ to deduce $|z-x| \leq |z-x|^{s+\alpha}|y-x|^{1-s-\alpha}$. 
Therefore it holds 
\[
\begin{split}
|g(y,z)| \big|  \|y-z\|_{A(z)}^{-2}   - \|y-x\|_{A(x)}^{-2}  \big| &\leq \frac{|g(y,z)|}{|y-x|^4} \big| \|y-z\|_{A(z)}^2 - \|y-x\|_{A(x)}^2 \big|\\
&\leq \frac{C \kappa }{|y-x|^{2-s-\alpha}} (|z-x|^\alpha |y-x|^2 +   |y-x||z-x|)\\
&\leq C \kappa (|z-x|^\alpha |y-x|^{s+\alpha} +  |z-x|^{s+\alpha})
\end{split}
\]
and \eqref{eq:F_h-2} follows.

We proceed by defining  $F: \Si^n \times \Si^n \to \R$ as
\[
F(y,x) : = \big(v_1(y) - v_1(x)\big) v_2(y) v_3(x)G(y,x) 
\]
when $y \neq x$ and $F(x,x)  =0$. We prove that  $F$ satisfies the assumptions of  Lemma \ref{lem:old.4.3}  for 
\[
\tilde \kappa:= \kappa  \|v_1\|_{C^{1}}\|v_2\|_{C^{0}}\|v_3\|_{C^{\alpha}}.
\]
This will then yield the claim.  The first condition follows immediately from \eqref{eq:F_h-1} as 
\[
|F(y,x)| = |v_1(y) - v_1(x)| \, |v_2(y)| \, |v_3(x)|\,  |G(y,x)| \leq  C\tilde \kappa |y-x|^{1+s+\alpha}
\]
for all $x \neq y \in \Si^n$. 

To show the second condition we fix $x,y,z \in \Si^n$ with $0< |z-x|\leq \frac12 |y-x|$. We will in fact show that 
\[
|F(y,z)- F(y,x)|  \leq C \tilde  \kappa \big( |z-x|^{\alpha}|y-x|^{1+ s+\alpha} + |z-x|^{s+\alpha} |y-x| \big),
\]
which is slightly stronger than what we need. To this aim we write
\[
\begin{split}
|F(y,z)- F(y,x)| &=|v_2(y)| | \big(v_1(y) - v_1(z)\big)  v_3(z)G(y,z) - \big(v_1(y) - v_1(x)\big)  v_3(x)G(y,x)  | \\
&\leq |v_2(y)||v_3(z)| |G(y,z) | |v_1(z) - v_1(x)| \\
&\,\,\,\,\,\,\,+  |v_2(y)||v_3(z)| |v_1(y) - v_1(x)|  |G(y,z)- G(y,x) | \\
&\,\,\,\,\,\,\,\,\,\,\,\,\,\,+  |v_2(y)| |v_1(y) - v_1(x)|  |G(y,x) | |v_3(z) -v_3(x) |. 
\end{split}
\]
We estimate the first term  term  on the RHS   by \eqref{eq:F_h-1}  and by $|z-x|\leq  |y-x|$ as
\[
|v_2(y)||v_3(z)| |G(y,z) | |v_1(z) - v_1(x)| \leq C\tilde \kappa |y-x|^{s+\alpha}|z-x|\leq C \tilde \kappa |y-x| |z-x|^{s+\alpha}.
\] 
We estimate the last similarly  
\[
 |v_2(y)| |v_1(y) - v_1(x)|  |G(y,x) | |v_3(z) -v_3(x) | \leq C\tilde \kappa |y-x|^{1+s+\alpha} |z-x|^\alpha.
\]
Finally,   we estimate the second  term  by  \eqref{eq:F_h-2}  as
\[
 |v_2(y)||v_3(z)| |v_1(y) - v_1(x)|  |G(y,z)- G(y,x) | \leq \tilde \kappa \big( |z-x|^{\alpha}|y-x|^{1+ s+\alpha} + |z-x|^{s+\alpha} |y-x| \big).
\]
We thus have the claim by Lemma \ref{lem:old.4.3}. 
\end{proof}

We conclude the section with a technical lemma, which is  analogous to  \cite[Lemma 4.4]{JL}.
\begin{lemma}
\label{lem:old.4.4}
Assume that $A(\cdot)$ is symmetric, elliptic and $\|A\|_{C^\alpha(\Si^n)} \leq C$ for $0 <\alpha < \min \{s,1-s \}$, and  the kernel  $K_A$ is defined in \eqref{def:kernel}. Assume further  $v_1 \in C^{1+s+\alpha}(\Si^n)$, $v_2 \in C^{s+\alpha}(\Si^n)$ and $v_3 \in C^{\alpha}(\Si^n)$. 
Then the function 
\[
\psi(x) = \int_{\Si^n}\big(v_1(y) - v_1(x)\big) v_2(y) v_3(x)K_A(y,x)\, d\H_y^n 
\]
is H\"older continuous and 
\[
\|\psi\|_{C^{\alpha}(\Si^n)}\leq C \|v_1\|_{C^{1+s+\alpha}(\Si^n)}\|v_2\|_{C^{s+\alpha}(\Si^n)}\|v_3\|_{C^{\alpha}(\Si^n)}.
\]

\end{lemma}

\begin{proof}
The claim follows from \cite[Lemma 4.4]{JL} once we show that the function 
\[
\psi(x) = \int_{\Si^n} (y-x)K_A(y,x)\, \, d\H_y^n 
\]
is uniformly $\alpha$-H\"older continuous. Indeed, even if the condition (ii) in Lemma \ref{lem:kernel} is slightly weaker than   \cite[Definition 4.1]{JL}, the claim still follows from the proof of  \cite[Lemma 4.4]{JL}.  Let us first prove the H\"older continuity of $\psi$ under the additional assumption that  at every point $x \in \Si^n$ it holds
\begin{equation}
\label{eq:add-ass} 
\langle A(x) x, \omega \rangle = 0  \qquad \text{for every } \, \omega \quad  \text{orthogonal to } \,  x. 
\end{equation}

To this aim fix $x$ and notice that, by the symmetry of the sphere and by \eqref{eq:add-ass} for all vectors $\omega$ orthogonal to $x$, it holds 
\[
 \int_{\Si^n} \la y-x, \omega \ra K_A(y,x)\, \, d\H_y^n  = 0. 
\]
Therefore 
\[
\psi(x) =  \int_{\Si^n} (y-x)K_A(y,x)\, \, d\H_y^n   = x \int_{\Si^n} \la y-x, x \ra K_A(y,x)\, \, d\H_y^n .
\]
For all $y \in \Si^n$ it holds $ \la y-x, x \ra = -\frac12 |y-x|^2$. Therefore we have 
\[
\psi(x) = - \frac{x}{2} \int_{\Si^n} |y-x|^2 K_A(y,x)\, \, d\H_y^n. 
\] 
Obviously the function $F(y,x) = - \frac{x}{2}|y-x|^2$ satisfies the assumptions of Lemma \ref{lem:old.4.3} and therefore we deduce that $\|\psi\|_{C^\alpha(\Si^n)}\leq C$.   

Let us then consider the general case. We write $A(x) = A_0(x) + A_1(x)$, where 
\[
A_0 = A- (Ax) \otimes x  - x \otimes (Ax) + 2 \la Ax, x\ra (x \otimes x) 
 \]
and $A_1 = A- A_0$. Then $A_0(\cdot)$ is symmetric, elliptic and satisfies the condition \eqref{eq:add-ass}. Moreover,  recalling $ \la y-x, x \ra = -\frac12 |y-x|^2$ and arguing as in \eqref{eq:g_h-1} and \eqref{eq:g_h-2} we observe that $g(y,x)= \langle A_1(x) (y-x), y-x\rangle$ 
 satisfies the following two conditions: for $x,y \in \Si^n$  it holds 
\begin{equation}
\label{eq:add-g-1} 
|g(y,x)| \leq C |y-x|^{3}
\end{equation}
 and for all $x,y,z \in \Si^n$ with $|z-x|\leq \frac12 |y-x|$ it holds
\begin{equation}
\label{eq:add-g-2} 
|g(y,z) -g_\varphi(y,x)| \leq C (|z-x| |y-x|^2  + |z-x|^\alpha |y-x|^3) .
\end{equation}
Since the argument is similar as before, we leave the details for the reader.  At the end, since we may write 
\[
\begin{split}
\psi(x) = &\int_{\Si^n} (y-x)K_A(y,x)\, \, d\H_y^n  = \int_{\Si^n} (y-x)K_{A_0}(y,x)\,  d\H_y^n  \\
&+\int_{\Si^n} (y-x)K_{A_0}(y,x)   \int_0^1    \frac{d}{d\mu } \left( \frac{\|y-x\|_{A_0(x)}^{2}}{\|y-x\|_{A_0(x)}^{2}+ \mu g(y,x)}\right)^{\frac{n+1+s}{2}}\, d \mu \,   d\H_y^n
\end{split}
\]
the conclusion follows from Lemma \ref{lem:bound-G-error}. 
\end{proof}

\section{Schauder estimates for the fractional parabolic equation}
\label{sec:shauder}

In this section we extend Theorem \ref{teo:parabolic} from $\R^n$ to the unit  sphere $\Si^n$. We consider linear operator of the form 
\begin{equation}
\label{def:linear-op-2}
 L_{A}[u](x) :=\int_{\Si^n} \frac {u  (y) -u (x)}{\|y-x\|_{A(x,t)}^{n+1+s}}  a(x,t)b(y,t)\, d\H_y^n,
\end{equation}
where the norm $\|\cdot \|_{A}$ is defined in \eqref{def:matrixnorm},  $A(\cdot, t)$ is symmetric  elliptic matrix field for every $t \in [0,T)$ such that $\sup_{t <T}\|A(\cdot, t)\|_{C^\alpha(\Si^n)} \leq C$ and the coefficients satisfy $a(x,t), b(x,t) \geq c >0$ for all $x \in \Si^n$ and $t \in [0,T)$ and 
\[
\sup_{t < T}\|a(\cdot,t)\|_{C^{s}(\Si^n)} \leq C \quad \text{and} \quad \sup_{t < T}\|b_0(\cdot,t)\|_{C^{1}(\Si^n)} \leq C.
\]
We also assume without mentioning that all  functions are continuous with respect to time.

We prove the following Schauder estimate on the sphere. 
\begin{theorem} \label{thm2:schauder}
Let  $T >0$ and $\alpha$ such that  $0 <\alpha <\min \{s,1-s\}$. Assume that $A(\cdot,t)$ is symmetric and elliptic  such that $\sup_{t <T}\|A(\cdot, t)\|_{C^\alpha(\Si^n)} \leq C$ and let $L_A$ be the operator defined in \eqref{def:linear-op-2}. Let $f:\Si^n \times  [0,T) \to \R$ be such that $\sup_{t <T}\|f(\cdot, t)\|_{C^\alpha(\Si^n)} <\infty$ and let $u : \Si^n \times [0,T) \to \R$ be the solution of the problem
\beq \label{eq:parabolicsphere}
\begin{cases}
\pa_t  u= L_A[u]  +f(x,t) 	\qquad & (x,t) \in \Si^n\times (0, T)
\\
u(x,0)=0                          &x\in \Si^n .
\end{cases}
\eeq
There exists a constant $C_T$ such that
\[
\sup_{t <T} \| u(\cdot, t)\|_{C^{1+s+\alpha}(\Si^n)} \leq C_T\sup_{t <T} \| f(\cdot,t) \|_{C^{\alpha}(\Si^n)} .
\]
\end{theorem}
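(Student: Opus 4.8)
The plan is to transfer the Euclidean Schauder estimate of Theorem \ref{teo:parabolic} to the sphere by a localization and partition-of-unity argument, reducing the problem on $\Si^n$ to finitely many problems on $\R^n$. First I would cover $\Si^n$ by finitely many coordinate patches $U_j$, each diffeomorphic via stereographic-type charts $\phi_j : B_1 \subset \R^n \to U_j \subset \Si^n$ to a Euclidean ball, together with a subordinate partition of unity $\{\chi_j\}$. Given the solution $u$ on $\Si^n$, I would study $u_j := (\chi_j u)\circ \phi_j$ on $\R^n$ (extended by zero), which satisfies a Cauchy problem of the form $\pa_t u_j = \tilde L_{A_j}[u_j] + \tilde f_j$, where $\tilde L_{A_j}$ is an operator of the type \eqref{def:linear-operator-1} with a symmetric, elliptic, $C^\alpha$ matrix field $A_j$ obtained by pushing forward $A$ through the chart, and $\tilde f_j$ collects the genuine right-hand side $f$ together with commutator error terms coming from (a) the cutoff $\chi_j$, (b) the change of variables, and (c) the fact that $L_A$ is nonlocal so multiplying by $\chi_j$ does not commute with it.

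The key point is to show that all these commutator terms are controlled in $C^\alpha$ by lower-order norms of $u$, namely by $\sup_t \|u(\cdot,t)\|_{C^{1+s+\alpha/2}(\Si^n)}$ (or even $C^{1+s}$), with a constant that can be absorbed. This is exactly where the convolution lemmas of Section 2 enter: the nonlocal commutator $[\chi_j, L_A]u = \chi_j L_A[u] - L_A[\chi_j u]$ can be written as an integral against the kernel $K_{A}(y,x)$ with an integrand of the form $(u(y)-u(x))(\chi_j(y)-\chi_j(x))(\cdots)$, which after the change of variables is precisely the situation handled by Lemma \ref{lem:old.4.4} (applied with $v_1 = u$, $v_2$ built from $\chi_j$, $v_3$ a smooth coefficient), giving a bound of the form $C\|u\|_{C^{1+s+\alpha}}\|\chi_j\|_{C^{s+\alpha}}$; by interpolation (Proposition \ref{label:interpolation}) this is $\leq \e \|u\|_{C^{1+s+\alpha}} + C_\e \|u\|_{C^0}$. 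Similarly, the error produced by replacing the chart-pushforward kernel with a clean kernel of the form $\|x-y\|_{A_j(x)}^{-n-1-s}$ on $\R^n$ is of lower order and is handled by Lemma \ref{lem:old.4.3} and Remark \ref{rem:lem2.5-eucl}, taking care of the difference between the intrinsic and extrinsic distance on $\Si^n$ (which is comparable and smooth away from antipodes, and the kernel is integrable at the antipode so that region contributes only a bounded smooth term).

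With these estimates in hand, Theorem \ref{teo:parabolic} applied to each $u_j$ gives
\[
\sup_{t<T}\|u_j(\cdot,t)\|_{C^{1+s+\alpha}(\R^n)} \leq C_T\Big(\sup_{t<T}\|f(\cdot,t)\|_{C^\alpha(\Si^n)} + \e \sup_{t<T}\|u(\cdot,t)\|_{C^{1+s+\alpha}(\Si^n)} + C_\e \sup_{t<T}\|u(\cdot,t)\|_{C^0(\Si^n)}\Big).
\]
Summing over $j$ and using that $\sum_j \chi_j \equiv 1$ to recover $\|u(\cdot,t)\|_{C^{1+s+\alpha}(\Si^n)}$ on the left, I would choose $\e$ small to absorb the middle term. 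It then remains to bound $\sup_t \|u(\cdot,t)\|_{C^0(\Si^n)}$: this follows from the maximum principle (or energy estimate) for the parabolic equation \eqref{eq:parabolicsphere} with zero initial data, giving $\sup_t\|u(\cdot,t)\|_{C^0} \leq T \sup_t \|f(\cdot,t)\|_{C^0}$, which is absorbed into the right-hand side. This yields the claimed estimate.

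The main obstacle I anticipate is the careful treatment of the nonlocal commutators under localization: unlike the local (second-order) case, one cannot simply freeze coefficients and cut off, because $L_A$ sees the whole sphere, so one must verify that every error integrand genuinely fits the structural hypotheses of Lemmas \ref{lem:old.4.3}, \ref{lem:old.4.4}, and \ref{lem:bound-G-error} — in particular controlling the Hölder dependence in the $x$-variable of terms like $\chi_j(y)-\chi_j(x)$ times the kernel difference, and handling the region near the antipodal point of each chart center where the chart degenerates but the kernel is harmless. A secondary technical point is ensuring the pushed-forward matrix field $A_j$ remains uniformly elliptic and uniformly $C^\alpha$ in $t$, which is automatic since the charts are fixed smooth diffeomorphisms and $A$ is uniformly $C^\alpha$ by hypothesis.
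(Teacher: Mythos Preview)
Your overall strategy matches the paper's: localize, push forward to $\R^n$, apply Theorem \ref{teo:parabolic}, control commutator errors with the lemmas of Section 2, absorb via smallness plus interpolation, and close with the maximum principle for $\|u\|_{C^0}$. Two points in your proposal are not right as stated, and the second one is where the real absorption happens.

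First, in the cutoff commutator $[\chi_j,L_A]u$ you have the roles in Lemma \ref{lem:old.4.4} reversed. The commutator integrand is $(\chi_j(x)-\chi_j(y))u(y)K_A(y,x)b(y)$, so one takes $v_1=\chi_j$ and $v_2=ub$, giving a bound $C\|\chi_j\|_{C^{1+s+\alpha}}\|u\|_{C^{s+\alpha}}$, which is genuinely lower order in $u$ and is absorbed by interpolation. Your stated bound $C\|u\|_{C^{1+s+\alpha}}\|\chi_j\|_{C^{s+\alpha}}$ is just a fixed constant times the top-order norm; interpolation cannot turn that into $\e\|u\|_{C^{1+s+\alpha}}+C_\e\|u\|_{C^0}$.

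Second, and more substantively, you do not address the weight $b(y,t)$ in the operator \eqref{def:linear-op-2}. After the chart, the pushed-forward operator is \emph{not} of the clean form \eqref{def:linear-operator-1}: it still carries $b(y,t)$ times the chart Jacobian. The paper removes this by freezing $b$ at the center point (replacing $b(y,t)$ by $b(e_{n+1},t)\sqrt{1-|y'|^2}$), and the resulting error $R_2$ is bounded via Lemma \ref{lem:old.4.4} by $C\delta^{1-s-\alpha}\|u\|_{C^{1+s+\alpha}}$ because the frozen weight vanishes at the center and is $C^1$. \emph{This} is the term that is absorbed by choosing the localization radius $\delta$ small; it is the genuine small parameter in the argument, not an interpolation $\e$. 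Only after this absorption does one interpolate the remaining lower-order norms ($C^1$ or $C^{s+\alpha}$) against $C^0$ and invoke the maximum principle. Your proposal conflates these two mechanisms, and as written the absorption step would not close.
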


\begin{proof}
By approximation we may assume that all the functions in the assumptions are smooth. Moreover, we note that we may absorb the coefficient $a(x,t)$ in \eqref{def:linear-op-2} into the matrix field $A(x,t)$ and thus obtain another symmetric elliptic matrix field which we still denote by $A(x,t)$. We also adopt the notation \eqref{def:kernel} for the kernel $K_A$, which means that 
\[
K_A(y,x) = \frac {1}{\|y-x\|_{A(x)}^{n+1+s}} .
\]

First, it follows from the maximum principle that 
\beq \label{eq:schauder-1}
\sup_{t <T}\|u(\cdot, t)\|_{C^0(\Si^n)} \leq  T  \sup_{t <T}\|f (\cdot, t)\|_{C^0(\Si^n)}.
\eeq
Indeed, fix a small $\eps >0$ and define $w_\eps(x,t)= (t+\eps)^{-1} u(x,t)$. Then $w$ is smooth and $w(x,0) = 0$. Assume that $w$ attains its maximum on $\Si^n\times (0,T-\eps]$ at $(x_0,t_0)$ with $t_0 \in (0,T-\eps]$. Then by the maximum principle it holds
\[
0 \leq \pa_t w(x_0,t_0) = \frac{\pa_t u(x_0,t_0)}{t_0+\eps} - \frac{u(x_0,t_0)}{(t_0+\eps)^2}  \quad \text{and} \quad 0\geq L_A[w](x_0,t_0) = \frac{L_A[u](x_0,t_0)}{t_0+\eps}.
\]
The equation \eqref{eq:parabolicsphere} implies 
\[
\frac{u(x_0,t_0)}{t_0+\eps}\leq f(x_0,t_0)  .
\]
By letting $\eps \to 0$ and repeating the argument for $-u$ we obtain \eqref{eq:schauder-1}.

Let us fix $x_0 \in \Si^n$ and by rotating the coordinates we may assume that it is the north pole, i.e.,  $x_0 = e_{n+1}$. Let us first localize the equation around $x_0$. To this aim fix a small $\delta>0$ and choose a smooth cutoff function  $\eta: \R \to [0,1]$ such that $\eta(r)=1$ for $|r|<\delta/2$ and $\eta(r)=0$ for $r\geq\delta$.
In the following we  write $x=(x',x_{n+1}) \in \R^{n+1}$ with $x'\in \R^n$. We denote
\[
v(x,t) = \eta (4|x'|) u(x,t) .
\]
Then clearly $\|v(\cdot, t)\|_{C^\beta(\Si^n)} \leq C \|u(\cdot, t)\|_{C^\beta(\Si^n\cap B_{\delta}(x_0))}$ for $\beta \in [0,2]$. In order to find an equation for $v$ we multiply the equation \eqref{eq:parabolicsphere} by $\eta (4|x'|) $ and obtain
\[
\pa_t  v(x,t)= \eta (4|x'|)  L_A[u](x,t)  + \eta (4|x'|)  \, f(x,t). 
\]
We organize the terms as 
\[
\eta (4|x'|)\big(  u(y,t) -  u(x,t)\big)  = v(y,t) - v(x,t) - u(y,t) ( \eta (4|y'|) - \eta (4|x'|) )
\]
and write 
\[
\begin{split}
 \eta (4|x'|)  L_A[u](x,t)  = &\int_{\Si^n} \eta(|y'|)\big(v(y,t) - v(x,t)\big) K_A(y,x) \, b(y,t)\, d\H_y^n\\
&+\int_{\Si^n} (1-\eta(|y'|) ) \big(v(y,t) - v(x,t)\big) K_A(y,x) \, b(y,t)\, d\H_y^n\\
&- \int_{\Si^n}u(y,t) \big( \eta (4|y'|) - \eta (4|x'|) \big) K_A(y,x) \, b(y,t)\, d\H_y^n .
\end{split}
\]
The last term is H\"older continuous due to Lemma \ref{lem:old.4.4}.  Since $v(x,t) = 0$ for $|x'|\geq \delta/4$, then the Kernel $ (1-\eta(|y'|) ) K_A(y,x)$ in the function 
\[
R_1(x,t) = \int_{\Si^n} (1-\eta(|y'|) ) \big(v(y,t) - v(x,t)\big) K_A(y,x) \, b(y,t)\, d\H_y^n
\]
is non-singular and therefore $\|R_1(\cdot, t)\|_{C^\alpha(\Si^n)} \leq C_\delta \|u(\cdot, t)\|_{C^\alpha(\Si^n)}$, where the constant depends on $\delta>0$. Therefore we deduce that it holds
\begin{equation} \label{eq:schauder-2}
\pa_t  v(x,t)= \int_{\Si^n} \eta(|y'|)\big(v(y,t) - v(x,t)\big) K_A(y,x) \, b(y,t)\, d\H_y^n  + f_1(x,t)
\end{equation}
and $f_1$ satisfies
\begin{equation} \label{eq:schauder-22}
\|f_1(\cdot, t)\|_{C^\alpha(\Si^n)} \leq C_\delta\|u(\cdot, t)\|_{C^{s+\alpha}(\Si^n)} +  C_\delta  \| f(\cdot,t) \|_{C^{\alpha}(\Si^n)}.
\end{equation}

We further  manipulate the leading order term in \eqref{eq:schauder-2} by writing
\begin{equation} \label{eq:schauder-3}
\begin{split}
 \int_{\Si^n} &\eta(|y'|)\big(v(y,t) - v(x,t)\big) K_A(y,x) \, b(y,t)\, d\H_y^n \\
&=  \int_{\Si^n} \eta(|y'|)\big(v(y,t) - v(x,t)\big) K_A(y,x) \, b(e_{n+1},t) \sqrt{1-|y'|^2}\, d\H_y^n +R_2(x,t),
\end{split}
\end{equation}
where
\[
R_2(x,t) :=  \int_{\Si^n} \eta(|y'|)\big(v(y,t) - v(x,t)\big) K_A(y,x) \,(b(y,t)- b(e_{n+1},t) \sqrt{1-|y'|^2})\, d\H_y^n
\]
Let us define $\psi(y,t) = \eta(|y'|) (b(y,t)- b(e_{n+1},t) \sqrt{1-|y'|^2})$. Then at the north pole  it holds $\psi(e_{n+1},t) = 0$. Therefore, since $\psi$ is $C^1$-regular we have
\[
\|\psi(\cdot, t)\|_{C^{s + \alpha}(\Si^n)} \leq C \delta^{1-s-\alpha}. 
\]
Therefore again  Lemma \ref{lem:old.4.4}  yields
\begin{equation} \label{eq:schauder-4}
\|R_2(\cdot, t)\|_{C^\alpha(\Si^n)} \leq C \delta^{1-s-\alpha} \|u\|_{C^{1+s+\alpha}(\Si^n\cap B_{\delta}(x_0))}.
\end{equation}
Note that since $b(e_{n+1},t) $ depends only on time, we may absorb it into the matrix field $A(y,t)$ on the RHS of \eqref{eq:schauder-3} and denote the new matrix field still by $A(y,t)$. 

Let us then consider the RHS  in \eqref{eq:schauder-3}. We write the unit sphere locally as a graph of the function $\varphi(x') = \sqrt{1- |x'|^2}$. Then we may write  $y,x \in \Si^n$ with $|y'|,|x'| < \delta$ as
\[
y-x = \begin{pmatrix} y'-x' \\ \varphi (y')  -\varphi (x') \end{pmatrix} =  \begin{pmatrix} y'-x' \\ \la  \nabla\varphi (x')  , y'-x'\ra \end{pmatrix} + \begin{pmatrix} 0 \\  T_{x'}[\varphi](y') \end{pmatrix}, 
\] 
where $T_{x'}[\varphi](y') = \varphi (y')  -\varphi (x')  - \la  \nabla\varphi (x')  , y'-x'\ra$. Let us define the $(n+1\times n)$ -  matrix field $J_\varphi(x')$ such that  for $\xi' \in \R^n$ it holds
\[
J_\varphi(x')\xi' =  \begin{pmatrix}\xi' \\ \la  \nabla\varphi (x')  , \xi' \ra \end{pmatrix}. 
\]
Then we may write $y-x = J_\varphi(x')(y'-x') + T_{x'}[\varphi](y') e_{n+1}$.  We may write
\[
\la A(x,t) (y-x), (y-x) \ra = \la \tilde A(x,t) (y'-x'), (y'-x') \ra  + g(y,x)
\]
for 
\begin{equation} \label{eq:schauder-5}
\tilde A(x,t)  = J_\varphi(x')^T A(x,t)J_\varphi(x')
\end{equation}
 and 
\[
g(y,x) = 2T_{x'}[\varphi](y') \, \la A(x,t)J_\varphi(x') (y'-x'), e_{n+1}\ra +(T_{x'}[\varphi](y') )^2 \la A(x,t) e_{n+1}, e_{n+1} \ra .
\]
Using $|T_{x'}[\varphi](y')| \leq C|y-x|^2$  and arguing as in \eqref{eq:g_h-1} and \eqref{eq:g_h-2}, we deduce that $g$   satisfies the conditions \eqref{eq:add-g-1} and \eqref{eq:add-g-2}.  
We also note immediately that the new $(n\times n)$ - matrix field $\tilde A(\cdot, t)$ defined in \eqref{eq:schauder-5} is symmetric and elliptic in $\R^n$, and also in this case, with a little abuse of notation, we write $\|\xi\|_{\tilde A(x,t)}^{2}  = \la \tilde A(x,t) \xi,\xi \ra  $ for $\xi \in \R^n$. 
We denote
\[
G_\mu(y,x)  = \frac{d}{d\mu} \left(\frac{ \|y'-x'\|_{\tilde A(x,t)}^2}{\|y'-x'\|_{\tilde A(x,t)}^{2} - \mu g(y,x)}\right)^{\frac{n+1+s}{2}}
\] 
and write the RHS  term in \eqref{eq:schauder-3} as
\begin{equation} \label{eq:schauder-55}
\begin{split}
\int_{\Si^n} &\eta(|y'|)\big(v(y,t) - v(x,t)\big) K_A(y,x) \sqrt{1-|y'|^2}\, d\H_y^n \\
&= \int_{\Si^n} \eta(|y'|)\frac{v(y,t) - v(x,t)}{\|y'-x'\|_{\tilde A(x,t)}^{n+1+s} }\sqrt{1-|y'|^2}\, d\H_y^n + R_3(x,t),
\end{split}
\end{equation}
where 
\[
R_3(x,t) = \int_{\Si^n}\int_0^1 \eta(|y'|)G_\mu(y,x)  \frac{v(y,t) - v(x,t)}{\|y'-x'\|_{\tilde A(x,t)}^{n+1+s}  }\sqrt{1-|y'|^2}\, d \mu \, d\H_y^n.
\]
We may  use Lemma \ref{lem:bound-G-error} to deduce
\begin{equation} \label{eq:schauder-6}
\|R_3(\cdot, t)\|_{C^\alpha(\Si^n)} \leq C \|u\|_{C^1(\Si^n)}. 
\end{equation}

Finally we map the RHS term in \eqref{eq:schauder-55} to $\R^n$,  by slight abuse of notation we write $v(x',t) = v\big((x',\varphi(x')), t\big)$, $\tilde A(x',t) =\tilde A\big((x',\varphi(x')), t\big)$,  and obtain 
\[
\begin{split}
\int_{\Si^n} \eta(|y'|)\frac{v(y,t) - v(x,t)}{\|y'-x'\|_{\tilde A(x,t)}^{n+1+s}   }\sqrt{1-|y'|^2}\, d\H_y^n  &=\int_{\R^n} \eta(|y'|)\frac{v(y',t) - v(x',t)}{\|y'-x'\|_{\tilde A(x',t)}^{n+1+s}   }\, d y' \\
&= \int_{\R^n}\frac{v(y',t) - v(x',t)}{\|y'-x'\|_{\tilde A(x',t)}^{n+1+s}  }\, d y'  + R_4(x',t),
\end{split}
\]
where
\[
R_4(x',t) = -\int_{\R^n}(1- \eta(|y'|)) \frac{v(y',t) - v(x',t)}{\|y'-x'\|_{\tilde A(x',t)}^{n+1+s}  }\, d y' .
\]
Since $v$ has compact support in $B_{\delta/4}$ and $\eta = 1$ in $B_{\delta/2}$, the integral in the definition of $R_4$ is non-singular and  we deduce that 
\[
\|R_4(\cdot, t)\|_{C^{\alpha}(\R^n)} \leq C_\delta \|v(\cdot, t)\|_{C^{\alpha}(\R^n)} \leq C_\delta \|u(\cdot, t)\|_{C^{\alpha}(\Si^n)} . 
\]
We then use  Theorem \ref{teo:parabolic}  and  \eqref{eq:schauder-2}, \eqref{eq:schauder-22}, \eqref{eq:schauder-3}, \eqref{eq:schauder-4}, \eqref{eq:schauder-55}, \eqref{eq:schauder-6}  and the above to conclude that $v$ is a solution of the equation
\[
\pa_t v = L_{\tilde A}(v) +  f_2
\]
and satisfies 
\[
\begin{split}
\sup_{t <T} &\|v(\cdot, t)\|_{C^{1+s+\alpha}(\R^n)} \leq C \sup_{t <T}  \|f_2(\cdot, t)\|_{C^\alpha(\R^n)}\\
 &\leq \sup_{t <T} \big(C_\delta \|u(\cdot, t)\|_{C^{1}(\Si^n)} + C \delta^{1-s-\alpha} \|u(\cdot, t)\|_{C^{1+s+\alpha}(\Si^n\cap B_{\delta}(x_0))}+  C_\delta  \| f(\cdot,t) \|_{C^{\alpha}(\Si^n)}\big).
\end{split}
\]
We first observe that 
\[
  \sup_{t <T} \|u(\cdot, t)\|_{C^{1+s+\alpha}(\Si^n\cap B_{\delta/8}(x_0))} \leq C \sup_{t <T} \|v(\cdot, t)\|_{C^{1+s+\alpha}(\R^n)}.
\]
We repeat the argument in balls $B_{\delta}(x_i)$ which cover the sphere, and use a standard covering argument to conclude that 
\[
 \sup_{t <T} \|u(\cdot, t)\|_{C^{1+s+\alpha}(\Si^n)} \leq  \sup_{t <T} \big(C_\delta \|u(\cdot, t)\|_{C^{1}(\Si^n)} + C \delta^{1-s-\alpha} \|u(\cdot, t)\|_{C^{1+s+\alpha}(\Si^n)}+  C_\delta  \| f(\cdot,t) \|_{C^{\alpha}(\Si^n)}\big).
\]
Choosing $\delta>0$ so  small that $C \delta^{1-s-\alpha} \leq \frac14$ implies
\[
 \sup_{t <T} \|u(\cdot, t)\|_{C^{1+s+\alpha}(\Si^n)} \leq  \sup_{t <T} \big(C_\delta \|u(\cdot, t)\|_{C^{1}(\Si^n)} + \tfrac14 \|u(\cdot, t)\|_{C^{1+s+\alpha}(\Si^n)}+  C_\delta  \| f(\cdot,t) \|_{C^{\alpha}(\Si^n)}\big).
\]
We use the interpolation inequality  \eqref{def:interpolation-eucl} and Young's inequality 
\[
\|u(\cdot, t)\|_{C^{1}(\Si^n)}  \leq C \|u(\cdot, t)\|_{C^{1+s+\alpha}(\Si^n)}^{\theta} \|u(\cdot, t)\|_{C^{0}(\Si^n)}^{1-\theta}\leq \eps  \|u(\cdot, t)\|_{C^{1+s+\alpha}(\Si^n)} +C_\eps \|u(\cdot, t)\|_{C^{0}(\Si^n)}.
\]
This yields
\[
 \sup_{t <T} \|u(\cdot, t)\|_{C^{1+s+\alpha}(\Si^n)} \leq  \sup_{t <T}\big(( \eps C_\delta+ \tfrac14) \|u(\cdot, t)\|_{C^{1+s+\alpha}(\Si^n)}+  C_\eps \|u(\cdot, t)\|_{C^{0}(\Si^n)} +  C_\delta  \| f(\cdot,t) \|_{C^{\alpha}(\Si^n)}\big).
\]
We then choose $\eps >0$  so small that $\eps C_\delta \leq \frac14$ and have 
\[
 \sup_{t <T} \|u(\cdot, t)\|_{C^{1+s+\alpha}(\Si^n)}  \leq C_{\delta, \eps} \sup_{t <T} (\|u(\cdot, t)\|_{C^{0}(\Si^n)} +   \| f(\cdot,t) \|_{C^{\alpha}(\Si^n)}).
\] 
The claim then follows from \eqref{eq:schauder-1}.
\end{proof}

\section{$C^{2+\alpha}$-estimates for the flow}

%

\begin{proof}[Proof of the Main Theorem]
By the result \cite{JL} the equation  \eqref{eq:the-flow} has $C^\infty$-solution  for a time interval $(0,T_0)$ and we denote it by $(E_t)_{t \in (0,T_0)}$. We assume that $T_0$ is the maximal time of existence and our aim is to show that $T_0 = \infty$, i.e., the flow does not develop singularities. We argue by contradiction and assume $T_0 < \infty$.  

By the result \cite{CNR}  the sets $E_t$ are convex and  by \cite{CSV2} they satisfy
\begin{equation}\label{eq:CSV-1}
\sup_{t < T_0} \|H_{E_t}^s\|_{L^\infty(\pa E_t)} \leq C
\end{equation}
where the constant is independent of $T_0$. Moreover again by  \cite{CSV2} there are points $x_t \in \R^{n+1}$ such that  
\begin{equation}\label{eq:CSV-2}
B_r(x_t) \subset E_t \subset B_R(x_t)
\end{equation}
for $0 < r < R$, which are independent of $T_0$. By translation we may assume that $x_{T_0} = 0$. By \eqref{eq:CSV-1} it holds that the normal velocity of the flow is bounded
\[
\sup_{t < T_0} \|V_t\|_{L^\infty(\pa E_t)} \leq C .
\]
This implies that there is $ t_0 \in(0,T_0)$ such that $B_{r/2} \subset E_t \subset B_{2R}$ for all $t \in [t_0,T_0]$. By restarting the flow at $t_0$ and by reparametrizing the time, we may assume that $t_0 = 0$ and  denote the maximal time of existence by $T$. 

From these results we deduce that we may parametrize the sets $E_t$ via the height function over the unit sphere, i.e., there are $h(\cdot, t) : \Si^n \to \R$ such that $h(x,t) \geq c_0 >0$ and   
\[
\pa E_t = \{ h(x,t) x : x \in \Si^n\} \qquad \text{for all }  t \in [0, T). 
\]
By \eqref{eq:CSV-1} and Proposition \ref{prop.curvaturebound} it holds 
\begin{equation}\label{eq:apriori-C-1}
\sup_{t < T}  \|h(\cdot, t)\|_{C^{1+s}(\Si^n)} \leq C.
\end{equation}
As we mentioned in the introduction, our goal is to improve the estimate \eqref{eq:apriori-C-1} to 
\begin{equation}\label{eq:apriori-C-2}
\sup_{t  < T}  \|h(\cdot, t)\|_{C^{2+s+\alpha}(\Si^n)} \leq C,
\end{equation}
where $\alpha>0$ is such that $\alpha < \min\{\tfrac{s}{4}, 1-s\}$. The bound $\alpha < \tfrac{s}{4}$ is technical and the reason will be clear later. The authors in \cite{CN} (see also \cite{CSV2}) already  point out that the estimate \eqref{eq:apriori-C-2} implies  that the  maximal time of existence is infinity. Let us briefly recall why it  is so. If we have \eqref{eq:apriori-C-2} and if we assume $T <\infty$,  then by continuity, the solution of   the equation \eqref{eq:the-flow} exists up to the endpoint $[0,T]$. Since the set $E_{T}$ is $C^{2}$-regular,  by  \cite{JL}  we may  restart the flow and obtain by the semigroup property that the solution exists for the time interval $[0,T+\delta]$ for some $\delta >0$. This contradicts the fact that $T$  is the maximal time of existence. Hence, once we have \eqref{eq:apriori-C-2} we obtain that the flow exists globally in time. The fact that the  flow $(E_t)_{t \geq 0}$ converges to some ball $B_1(x_0)$ exponentially fast follows from \cite[Corollary 3.5]{CN}.  Hence, we need to show \eqref{eq:apriori-C-2}.

By \eqref{eq:the-pde-1} we may write the equation \eqref{eq:diff-flow} using the height functions as 
\[
\partial_t \nabla_i h =  L_{A}[\nabla_i h]   + R_0(x,t) +R_1(x,t) +   \pa_t h   \, B(x,h, \nabla_\tau h):\nabla_\tau^2 h
\]
with initial condition $h(\cdot, 0) = h_0$, where $h_0 \in C^\infty(\Si^n)$. Here $R_0$ is defined in \eqref{eq:error-R-0} and $R_1$ in  \eqref{eq:error-R}. We define  $u(x,t) := \nabla_i h(x,t) - \nabla_i h_0(x)$ and deduce that it is a solution of 
\[
\partial_t  u  =  L_{A}[u] + f_i(x,t)  + L_{A}[\nabla_i h_0]
\]
with $u(x,0) = 0$ for
\[
f_i(x,t)  =  R_0(x,t) +R_1(x,t) +   \pa_t h   \, B(x,h, \nabla_\tau h):\nabla_\tau^2 h.
\]
Theorem \ref{thm2:schauder} implies 
\begin{equation} \label{eq:C-2-estimate-1}
\sup_{t < T} \|u(\cdot, t)\|_{C^{1+s + \alpha}(\Si^n)} \leq C (\sup_{t < T} \|f_i(\cdot, t)\|_{C^{\alpha}(\Si^n)} +  \|L_{A}[\nabla_i h_0] \|_{C^{\alpha}(\Si^n)}).
\end{equation}
We use this for every direction $e_i$ and obtain 
\[
\sup_{t \leq T_0} \|h(\cdot, t)\|_{C^{2+s + \alpha}(\Si^n)} \leq C\sup_{t \leq T} \max_{i=1,\dots, n+1}\|f_i(\cdot, t)\|_{C^{\alpha}} +C( \|h_0\|_{C^{2+s + \alpha}} + \|L_{A}[\nabla_i h_0] \|_{C^{\alpha}}).
\]
We are then left to estimate the RHS of \eqref{eq:C-2-estimate-1}, i.e., for every $i = 1,\dots, n+1$ we need to bound
\begin{equation} \label{eq:C-2-estimate-2}
\|L_{A}[\nabla_i h_0] \|_{C^{\alpha}(\Si^n)} + \|R_0(\cdot, t)\|_{C^{\alpha}(\Si^n)} +   \|R_1(\cdot, t)\|_{C^{\alpha}(\Si^n)}  +  \|\pa_t h   \, B(x,h, \nabla_\tau h):\nabla_\tau^2 h\|_{C^{\alpha}(\Si^n)} .
\end{equation}
The first term is easy to bound,  since $h_0 \in C^{\infty}$. Indeed,  by Lemma \ref{lem:old.4.4} it holds $\|L_{A}[\nabla_i h_0] \|_{C^{\alpha}(\Si^n)} \leq C \|h_0\|_{C^{2+s + \alpha}} \leq C$. 

For the second term on the RHS of \eqref{eq:C-2-estimate-2} we recall that the term $R_0(\cdot, t)$ is given by  \eqref{eq:error-R-0}. We denote 
\begin{equation} \label{def:G-tau}
G_\mu(x,y) = \frac{d}{d\mu } \left( \frac{\| y-x\|_{A(x,t)}^{2}}{\| y-x\|_{A(x,t)}^{2} + \mu g_{h}(y,x)}\right)^{\frac{n+1+s}{2}}
\end{equation}
and recall that by \eqref{eq:g_h-1} and \eqref{eq:g_h-2} the function $g_{h}$ satisfies the conditions of Lemma \ref{lem:bound-G-error}  with $\kappa = C \|h\|_{C^{1+s+\alpha}}$. Since we may write  
\[
R_0(\cdot, t) = a_0(x,t)  \int_{\Si^n}\int_0^1   \frac{\nabla_i h(y,t) - \nabla_i h(x,t) }{\| y-x\|_{A(x,t)}^{n+1+s} } G_\mu(x,y)   b_0(y,t) \, d\mu\, d\H_y^n
\]
we may use  Lemma \ref{lem:bound-G-error} with $v_1 = \nabla_i h(\cdot, t), v_2 = b_0(\cdot,t)$ and $v_3 = a_0(\cdot,t) $  and recall that the coefficients $a_0$ and $b_0$ satisfy the conditions \eqref{eq:coeff}  to  deduce
\begin{equation} \label{eq:R-0-estimate}
\sup_{t \leq T} \|R_0(\cdot, t)\|_{C^{\alpha}} \leq \sup_{t \leq T} C \|h(\cdot, t)\|_{C^{1+s+\alpha}} \|h(\cdot, t)\|_{C^{2}} . 
\end{equation}

To  treat the term $\|R_1(\cdot, t)\|_{C^{\alpha}(\Si^n)}$ in \eqref{eq:C-2-estimate-2} we recall that it is of the form \eqref{eq:error-R}. We fix $t$ and write $F_k(x)  = F(y, h(x,t)), a_k(x) = a_k(x,h, \nabla_\tau h)$ and  $b_k(y) = b_k(y,h, \nabla_\tau h)$ for short, and note that then 
\begin{equation} \label{eq:R-estimate1}
 \|F_k\|_{C^{k+ \beta}} \leq C \|h\|_{C^{k+\beta}} , \quad   \|a_k\|_{C^{k+ \beta}} \leq C \|h\|_{C^{k+1+\beta}} \quad \text{and} \quad  \|b_k\|_{C^{k+ \beta}} \leq C \|h\|_{C^{k+1+\beta}}
\end{equation}
for all $k = 0,1,2, \dots$ and $\beta \in [0,1]$. We use  the  argument  from \eqref{eq:deri-linear} and write  for $A(x,t) = h(x,t)^2I + \nabla_\tau h(x,t) \otimes \nabla_\tau h(x,t)$
\[
\begin{split}
\int_{\Si^n} &a_k(x) b_k(y) \frac{F_k(y) -F_k(x) }{|h(y,t) y - h(x,t)x|^{n+1+s}} \, d\H_y^n 
\\
&= \overbrace{\int_{\Si^n} a_k(x) b_k(y) \frac{F_k(y) -F_k(x) }{\| y-x\|_{A(x,t)}^{n+1+s} } \, d\H_y^n}^{=: \rho_1} \\
&+\int_{\Si^n} a_k(x) b_k(y) \frac{F_k(y) -F_k(x) }{\| y-x\|_{A(x,t)}^{n+1+s} } \int_0^1 \frac{d}{d\mu } \left( \frac{\| y-x\|_{A(x,t)}^{2}}{\| y-x\|_{A(x,t)}^{2} + \mu g_h(y,x)}\right)^{\frac{n+1+s}{2}}\, d \mu \, d\H_y^n = \rho_1(x) + \rho_2(x),
\end{split}
\]
where $g_h$ is given by \eqref{eq:split-by-g}.   To estimate the H\"older norm of the first term $\rho_1$ on the RHS above, we use Lemma \ref{lem:old.4.4},   \eqref{eq:apriori-C-1},\eqref{eq:R-estimate1}   and deduce that we may bound its $C^{\alpha}$-norm by 
\[
\|\rho_1\|_{C^\alpha}\leq  C \|F_k\|_{C^{1+s+\alpha}} \|b_k\|_{C^{s+\alpha}}\|a_k\|_{C^{\alpha}}\leq C \|h\|_{C^{1+s+\alpha}} \|h\|_{C^{1+s+\alpha}}\|h\|_{C^{1+\alpha}}\leq C \|h\|_{C^{1+s+\alpha}}^2.  
\]
To estimate the H\"older norm of the second term $\rho_2$, we argue exactly as in \eqref{eq:R-0-estimate}, define  $G_\mu$ as in \eqref{def:G-tau} and recall that it satisfies the conditions of Lemma \ref{lem:bound-G-error} with  $\kappa = C \|h\|_{C^{1+s+\alpha}}$.  We may then estimate  $\|\rho_2\|_{C^\alpha}$ by Lemma \ref{lem:bound-G-error}   with $v_1 = F_k, v_2 = b_0$ and $v_3 = a_0$ and by \eqref{eq:R-estimate1} to  deduce 
\[
\|\rho_2\|_{C^\alpha}\leq C \|h\|_{C^{1+s+\alpha}}\|F_k\|_{C^1} \|b_0\|_{C^0} \|a_0\|_{C^\alpha} \leq C \|h\|_{C^{1+s+\alpha}}  \|h\|_{C^{1}}^2 \|h\|_{C^{1+\alpha}} \leq  C \|h\|_{C^{1+s+\alpha}}. 
\]
In conclusion we have 
\begin{equation} \label{eq:R-estimate}
\sup_{t \leq T} \|R_1(\cdot, t)\|_{C^{\alpha}} \leq \sup_{t \leq T} C \|h(\cdot, t)\|_{C^{1+s+\alpha}}^2 \leq \sup_{t \leq T} C \|h(\cdot, t)\|_{C^{1+s+\alpha}} \|h(\cdot, t)\|_{C^{2}} . 
\end{equation}

We are left with the last term in \eqref{eq:C-2-estimate-2}.  We first  use \eqref{eq:apriori-C-1}  and obtain 
\begin{equation} \label{eq:pa-t-estimate-0}
\|\pa_t h   \, B(x,h, \nabla_\tau h):\nabla_\tau^2 h\|_{C^{\alpha}} \leq C \|\pa_t h   \nabla_\tau^2 h\|_{C^{\alpha}} \leq C \|\pa_t h \|_{C^{\alpha}}  \| h\|_{C^{2+\alpha}}.\end{equation}
We claim that  when $\alpha \leq \tfrac12$ it holds
\begin{equation} \label{eq:pa-t-estimate}
\sup_{t \leq T} \|\pa_t h \|_{C^{\alpha}}  \leq \sup_{t \leq T} C(1+ \|h\|_{C^{2+\alpha}}^{2\alpha} ) . 
\end{equation}
Note that the assumption $\alpha < \min\{\tfrac{s}{4},1-s\}$ implies  $\alpha < \tfrac12$.

To this aim we recall that the normal velocity of the flow is bounded, i.e., 
\[
\sup_{t \leq T} \|V_t\|_{L^\infty(\pa E_t)} \leq C .
\]  
We then use the parametrization of the normal velocity \eqref{eq:normal},  the a priori estimates $h \geq c_0$  and \eqref{eq:apriori-C-1}  to deduce that 
\[
\sup_{t \leq T} \|\pa_t h \|_{C^0} \leq C .
\]
We use  the interpolation inequality from Proposition \ref{label:interpolation} to estimate
\begin{equation} \label{eq:pa-t-estimate2}
\|\pa_t h \|_{C^{\alpha}} \leq C \|\pa_t h \|_{C^{1}}^{\alpha}\|\pa_t h \|_{C^{0}}^{1-\alpha} \leq C  \|\pa_t h \|_{C^{1}}^{\alpha}.
\end{equation}
Recall that $\nabla_i h$ is a solution of the equation 
\[
\partial_t  \nabla_i h  =  L_{A}[\nabla_i h] + f_i(x,t).  
\]
Lemma \ref{lem:old.4.4} yields $\|L_{A}[\nabla_i h] \|_{C^{\alpha}} \leq C \|h\|_{C^{2+s+\alpha}}$ and thus  by Theorem \ref{thm2:schauder}  we have 
\[
\sup_{t \leq T} \|\pa_t  h \|_{C^{1}} \leq \sup_{t \leq T} \|\pa_t  h \|_{C^{1+\alpha}} \leq \sup_{t \leq T} \max_{i = 1, \dots, n+1} C(1+\|f_i(\cdot, t)\|_{C^{\alpha}}).
\]
We use this,  \eqref{eq:R-0-estimate}, \eqref{eq:R-estimate}, \eqref{eq:pa-t-estimate-0}, \eqref{eq:pa-t-estimate2}  and Young's inequality to estimate 
\[
\begin{split}
\sup_{t \leq T} \|\pa_t  h \|_{C^{1}} &\leq C \sup_{t \leq T} \big(1+  \|h\|_{C^{1+s+\alpha}} \|h\|_{C^{2}} + \|\pa_t h \|_{C^{\alpha}}  \| h\|_{C^{2+\alpha}}\big)\\
&\leq  \sup_{t \leq T} \big(C+  C\|h\|_{C^{2}}^2 +C\eps  \|\pa_t h \|_{C^{\alpha}}^{\frac{1}{\alpha}}  + C_\eps \| h\|_{C^{2+\alpha}}^{\frac{1}{1-\alpha}}\big)\\
&\leq \sup_{t \leq T} \big(C+  C\|h\|_{C^{2}}^2 +C \eps  \|\pa_t h \|_{C^{1}}+ C_\eps \| h\|_{C^{2+\alpha}}^{\frac{1}{1-\alpha}}\big).
\end{split}
\] 
By choosing $\eps$ so small that $C\eps \leq \tfrac12$ we have
\[
\sup_{t \leq T} \|\pa_t  h \|_{C^{1}} \leq C \sup_{t \leq T} \big(1+  \|h\|_{C^{2}}^2 +  \| h\|_{C^{2+\alpha}}^{\frac{1}{1-\alpha}}\big).
\]
When $\alpha \leq \frac12$ it holds $\frac{1}{1-\alpha} \leq 2$. Therefore we obtain \eqref{eq:pa-t-estimate}  from  \eqref{eq:pa-t-estimate2} and from  the above.

Now it follows from \eqref{eq:pa-t-estimate-0} and  \eqref{eq:pa-t-estimate}  that 
\[
\sup_{t \leq T}\|\pa_t h   \, B(x,h, \nabla_\tau h):\nabla_\tau^2 h\|_{C^{\alpha}} \leq \sup_{t \leq T} C(1+  \| h\|_{C^{2+\alpha}}^{1+2\alpha}) .
\]
Therefore we obtain by \eqref{eq:C-2-estimate-1}, \eqref{eq:R-0-estimate}, \eqref{eq:R-estimate} and from the above that 
\[
\sup_{t \leq T_0} \|h(\cdot, t)\|_{C^{2+s + \alpha}} \leq  \sup_{t \leq T} C\big(1+  \|h(\cdot, t)\|_{C^{1+s+\alpha}} \|h(\cdot, t)\|_{C^{2}}   +  \| h(\cdot, t)\|_{C^{2+\alpha}}^{1+2\alpha}\big) .
\]  
We use the interpolation inequality from Proposition \ref{label:interpolation} to estimate first 
\[ 
 \|h(\cdot, t)\|_{C^{1+s+\alpha}} \leq C  \|h(\cdot, t)\|_{C^{2+s+\alpha}}^{\frac{\alpha}{1+\alpha}} \|h(\cdot, t)\|_{C^{1+s}}^{\frac{1}{1+\alpha}}  
\]
and then 
\[
 \|h(\cdot, t)\|_{C^{2}}\leq  \|h(\cdot, t)\|_{C^{2+\alpha}} \leq C  \|h(\cdot, t)\|_{C^{2+s+\alpha}}^{\frac{1-s+\alpha}{1+\alpha}} \|h(\cdot, t)\|_{C^{1+s}}^{\frac{s}{1+\alpha}} . 
\]
Recall that \eqref{eq:C-2-estimate-1} implies $\sup_{t < T}\|h(\cdot, t)\|_{C^{1+s}}\leq C $. Therefore we obtain by combining the above inequalities
\begin{equation} \label{eq:pa-t-estimate3}
\sup_{t < T} \|h(\cdot, t)\|_{C^{2+s + \alpha}} \leq  \sup_{t <T} C\big(1+  \|h(\cdot, t)\|_{C^{2+s+\alpha}}^{\frac{1-s+2\alpha}{1+\alpha}}  + \|h(\cdot, t)\|_{C^{2+s+\alpha}}^{\frac{1-s+\alpha}{1+\alpha}(1+2 \alpha)}\big) .
\end{equation}
Recall that we choose $\alpha < \frac{s}{4}$. This implies 
\[
\frac{1-s+2\alpha}{1+\alpha} < \frac{1}{1+\alpha} <1 \quad \text{and} \quad \frac{1-s+\alpha}{1+\alpha}(1+2 \alpha) < \frac{1}{1+\alpha} <1. 
\]
Thefore we obtain from \eqref{eq:pa-t-estimate3} that 
\[
\sup_{t < T} \|h(\cdot, t)\|_{C^{2+s + \alpha}} \leq  C\big(1+  \sup_{t <T} \|h(\cdot, t)\|_{C^{2+s+\alpha}}^{\frac{1}{1+\alpha} } \big)
\]
which implies 
\[
\sup_{t < T} \|h(\cdot, t)\|_{C^{2+s + \alpha}} \leq  C
\]
and the claim  \eqref{eq:apriori-C-2} follows. 
 \end{proof}

\appendix
\section {}

In the appendix we give a self-contained proof  of Theorem \ref{teo:parabolic}. Recall that we consider linear operator 
\[
L_A[u](x)  = \int_{\R^n} \frac{u(y+x)-u(x)}{\la A(x,t) y, y\ra^{\frac{n+1+s}{2}}} dy =  \int_{\R^n} \frac{u(y)-u(x)}{\la A(x,t) (y-x), (y-x)\ra^{\frac{n+1+s}{2}}} dy .
\]
In the following, we denote by $\F [u](\xi)$  the Fourier transform of $u$ evaluated at a point $\xi$
\[
\F [u](\xi)= \int_{\R^n} u(x) e^{-2\pi i\langle\xi ,x\rangle}\, dx
\]
and denote by $\F^{-1}$ the inverse Fourier transform, i.e., 
\[
\F^{-1}[v](x) =  \int_{\R^n} v(\xi) e^{2 \pi i\langle\xi ,x\rangle}\, d\xi.
\] 
The proof of Theorem \ref{teo:parabolic} follows from  a  small perturbation argument. We begin with an easy lemma.
\begin{lemma} \label{lem:constant}
Let $A(t)$ be elliptic and symmetric matrix which is constant w.r.t $x$. Then
\beq \label{eq:formula1}
\F \big(L_{A}[u]\big)(\xi) =  -a(\xi,t) |\xi|^{1+s}\F[u](\xi)
\eeq
where $a(\xi,t)$ is  $0$-homogeneous function w.r.t to $\xi$ given by 
\[
a(\xi,t)= \int_{\Si^{n-1}}\int_0^\infty \frac{1- \cos(2 \pi r\langle \frac{\xi}{|\xi|},\omega\rangle )}{ r^{2+s}\langle A(t)\omega,\omega\rangle^\frac{n+1+s}{2}}\,dr\, d\H^{n-1}_\omega.
\]
Moreover,  it holds 
\beq \label{eq:a0}
\frac{1}{C} \leq a(\xi,t) \leq C 
\;\;\;\text{and}\;\;\;
|\pa ^{\beta} a(\xi,t)|\leq C_{|\beta|} \frac{1}{|\xi|^{|\beta|}}  \qquad \text{for } \xi \not = 0 \text{ and } t\in (0,T).
\eeq
\end{lemma}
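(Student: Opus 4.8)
The plan is to compute $L_A[u]$ directly on the Fourier side. Since $A$ does not depend on $x$, the operator $L_A$ is a Fourier multiplier: writing $u(y) = \int_{\R^n} \F[u](\xi) e^{2\pi i \la \xi, y\ra}\, d\xi$ and plugging into $L_A[u](x) = \int_{\R^n} \frac{u(x+z) - u(x)}{\la A z, z\ra^{(n+1+s)/2}}\, dz$, we may interchange the order of integration (justified for Schwartz $u$, then extended by density) to get
\[
L_A[u](x) = \int_{\R^n} \F[u](\xi) e^{2\pi i \la \xi, x\ra} \left( \int_{\R^n} \frac{e^{2\pi i \la \xi, z\ra} - 1}{\la A z, z\ra^{(n+1+s)/2}}\, dz \right) d\xi,
\]
so the multiplier is $m(\xi) := \int_{\R^n} \frac{e^{2\pi i \la \xi, z\ra} - 1}{\la A z, z\ra^{(n+1+s)/2}}\, dz$. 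This identifies \eqref{eq:formula1} once we show $m(\xi) = a(\xi,t)|\xi|^{1+s}$ with $a$ as claimed. First I would check $m$ is real-valued: replacing $z \mapsto -z$ shows the imaginary part integrates to zero, so $m(\xi) = -\int_{\R^n} \frac{1 - \cos(2\pi \la \xi, z\ra)}{\la A z, z\ra^{(n+1+s)/2}}\, dz$ (up to sign conventions, which I would fix to match the stated formula). Next, to extract the homogeneity and the spherical integral, pass to polar coordinates $z = r\omega$ with $r > 0$, $\omega \in \Si^n$ (here $\Si^n \subset \R^{n+1}$ should read $\Si^{n-1} \subset \R^n$ — I would reconcile the indexing with the paper's convention), using $\la Az,z\ra^{(n+1+s)/2} = r^{n+1+s}\|\omega\|_{A}^{n+1+s}$ and $dz = r^{n-1}\,dr\,d\omega$:
\[
m(\xi) = \int_{\Si^{n-1}} \frac{1}{\|\omega\|_A^{n+1+s}} \left( \int_0^\infty \frac{1 - \cos(2\pi r \la \xi, \omega\ra)}{r^{2+s}}\, dr \right) d\omega.
\]
The inner integral, by the substitution $r \mapsto r/|\la\xi,\omega\ra|$, equals $|\la \xi,\omega\ra|^{1+s} \int_0^\infty \frac{1-\cos 2\pi r}{r^{2+s}}\, dr = c_s |\la\xi,\omega\ra|^{1+s}$; the constant $c_s$ is finite because the integrand is $O(r^{-s})$ near $0$ (as $s<1$) and $O(r^{-2-s})$ at infinity. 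Factoring $|\la\xi,\omega\ra|^{1+s} = |\xi|^{1+s}|\la \xi/|\xi|, \omega\ra|^{1+s}$ gives exactly $m(\xi) = a(\xi,t)|\xi|^{1+s}$ with the stated $a$, which is manifestly $0$-homogeneous in $\xi$.

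For the bounds \eqref{eq:a0}: the two-sided bound on $|a|$ follows from ellipticity, since $\lambda|\omega|^2 \le \|\omega\|_A^2 \le \Lambda|\omega|^2$ forces $\Lambda^{-(n+1+s)/2} \le \|\omega\|_A^{-(n+1+s)} \le \lambda^{-(n+1+s)/2}$ on $\Si^{n-1}$, and $\int_{\Si^{n-1}} |\la\nu,\omega\ra|^{1+s}\,d\omega$ is a fixed positive constant independent of the unit vector $\nu$; hence $c/C \le a(\xi,t) \le C$ with $C = C(n,s,\lambda,\Lambda)$. For the derivative bounds, since $a(\cdot,t)$ is $0$-homogeneous and smooth away from the origin — smoothness coming from differentiating under the integral sign, the integrand $|\la\xi/|\xi|,\omega\ra|^{1+s}$ being $C^1$ in $\xi\neq 0$ and, for the spherical average, higher derivatives being integrable in $\omega$ since the singular set $\{\la\xi,\omega\ra=0\}$ has measure zero on $\Si^{n-1}$ with an integrable singularity of order $|{\cdot}|^{1+s-|\beta|}$ up to the orders needed — Euler's relation for homogeneous functions gives $\pa^\beta a$ is $(-|\beta|)$-homogeneous, so $|\pa^\beta a(\xi,t)| = |\xi|^{-|\beta|}|\pa^\beta a(\xi/|\xi|,t)| \le C_{|\beta|}|\xi|^{-|\beta|}$ by compactness of $\Si^{n-1}$.

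The main obstacle is making the differentiation-under-the-integral argument for the higher-order bounds on $\pa^\beta a$ fully rigorous: the function $\omega \mapsto |\la\nu,\omega\ra|^{1+s}$ is only $C^{1,s}$ in $\nu$, so one cannot naively differentiate arbitrarily many times pointwise in $\omega$. I would handle this by instead differentiating the spherical integral $\nu \mapsto \int_{\Si^{n-1}} |\la\nu,\omega\ra|^{1+s}\|\omega\|_A^{-n-1-s}\,d\omega$ as a whole: after a rotation sending $\nu$ to $e_1$ and splitting off the variable $\omega_1 = \la\nu,\omega\ra$, the integral becomes smooth in $\nu$ because the $|\omega_1|^{1+s}$ singularity is integrated against the smooth density on the sphere, and each derivative in $\nu$ either lands on the smooth density (harmless) or produces at worst a factor $|\omega_1|^{s-k}$ which remains integrable over $\Si^{n-1}$ as long as $s > 0$ for $k=1$ — and for the statement of the lemma we in fact only need finitely many derivatives with constants $C_{|\beta|}$ allowed to blow up, so it suffices to note $a(\cdot,t)$ is $C^\infty$ on $\R^n\setminus\{0\}$, which this localized computation yields. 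All constants are uniform in $t\in(0,T)$ because they depend on $A(t)$ only through $\lambda,\Lambda$.
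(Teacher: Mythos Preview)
Your proposal is correct and follows essentially the same route as the paper, which computes $\F[L_A u]$ directly, passes to polar coordinates, and then simply asserts that the bounds \eqref{eq:a0} are ``immediate'' from the formula and the $0$-homogeneity of $a$; you are in fact more careful than the paper on every step. The one place where your argument is incomplete is the higher-order smoothness of $a$ (your differentiation-under-the-integral sketch only handles $k=1$), but this has a clean fix you overlooked: substituting $y=A(t)^{-1/2}z$ in the integral defining the multiplier gives $m(\xi)=c_{n,s}\,(\det A(t))^{-1/2}\,\langle A(t)^{-1}\xi,\xi\rangle^{(1+s)/2}$, hence $a(\xi,t)=c_{n,s}\,(\det A(t))^{-1/2}\,\big(\langle A(t)^{-1}\xi,\xi\rangle/|\xi|^2\big)^{(1+s)/2}$, which is manifestly $C^\infty$ on $\R^n\setminus\{0\}$ with all constants controlled by $\lambda,\Lambda$.
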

\begin{proof}
Since $A(t)$ is constant with respect to the space  and we are applying Fourier transform with respect to space, we drop the time dependence.  
Setting $u_y(x):= u(y+x)$, we have $\F[u_y](\xi)= e^{2 \pi i\langle \xi, y\rangle} \F[u] (\xi)$. 
By linearity of the Fourier transform, we get
\[\begin{split}
\F[ L_{A} u](\xi)&=
\F\left[    \int_{\R^n} \frac{u(y+x)- u(x)}{\langle A\,y,y\rangle^\frac{n+1+s}{2}}\,dy       \right](\xi) =  \int_{\R^n} \frac{\F[u_y](\xi)- \F[u](\xi)}{\langle A\,y,y\rangle^\frac{n+1+s}{2}}\,dy  \\
\\&=- \F[u](\xi)   \int_{\R^n} \frac{1- \cos(2 \pi \langle \xi,y\rangle)}{\langle A\,y,y\rangle^\frac{n+1+s}{2}}\,dy 
\\
&=-|\xi|^{1+s}\F[u](\xi) \int_{\Si^{n-1}}\int_0^\infty \frac{1- \cos(2 \pi r\langle \frac{\xi}{|\xi|},\omega\rangle )}{ r^{2+s}\langle A\omega,\omega\rangle^\frac{n+1+s}{2}}\,dr\, d\H^{n-1}_\omega.
\end{split} 
\]
From this formula and the fact that $a(\xi)$ is $0$-homogeneous it is immediate to obtain  the bounds \eqref{eq:a0}
\end{proof}

In order to prove Theorem \ref{teo:parabolic}, we first consider the case of coefficients which are constant in space. 
To that aim we recall that we may characterize the H\"older continuity   by its Fourier transform using the Hardy-Littlewood decomposition. Let  
$\eta \in C_0^\infty(\R^n)$be  such that $0 \leq \eta \leq 1$, $\text{supp} \, \eta \subset  \bar{B}_2$,  and $\eta(\xi) = 1$  for $\xi \in B_1$. Define then $\delta(\xi) = \eta(\xi) - \eta(2\xi)$. Then the functions $\delta(2^{-j}\xi )$ form a partition of unity, i.e.
\[
1 = \sum_{j= - \infty}^\infty \delta(2^{-j} \xi), \qquad \text{for }\,   \xi \neq 0.
\]
Next we define  $\Psi : \R^n \to \R$ via its Fourier transform
\[
\F[\Psi](\xi) = \delta(\xi) = \eta(\xi) - \eta(2\xi).
\]
We write $\Psi_t(x) = t^{-n}\Psi(x/t)$. Then we have by scaling  $\F[\Psi_{2^{-j}}](\xi) = \delta(2^{-j}\xi)$. Finally we define the operator $\Delta_j$ by convolution
\[
\Delta_j(f) := f * \Psi_{2^{-j}}. 
\]
Note that since $\F [u * v] = \F [u] \cdot \F [v]$ we may write 
\[
\Delta_j(f)   = \mathcal{F}^{-1}[ \F [f] \cdot \delta(2^{-j}\xi)].
\]
We recall that when $\gamma >0$ is not an integer it holds  (see e.g. \cite{Tri})
\beq \label{eq:fourierholder}
\frac{1}{C} \|f\|_{C^\gamma(\R^n)} \leq \sup_{j \geq 1} \, 2^{j\gamma} \|\Delta_j(f)\|_{L^\infty} \leq C \|f\|_{C^\gamma(\R^n)}. 
\eeq
We assume that $A(t)$ is an elliptic and symmetric  matrix  field which is continuous w.r.t to time. 


\begin{theorem} \label{thm:timedependent}
Let $u\in C^{1+s+\alpha}(\R^n)$ be a solution of 
\beq
\label {eq:timedependent}
\begin{cases}
\pa_t u =  L_{A(t)} [u]+ f(x,t),
\\
u(x,0)=0
\end {cases}
\eeq 
where the matrix field is symmetric, elliptic, continuous w.r.t. time and constant in space. For  $\alpha< \min\{ s, 1-s\}$ it holds  
\[
\sup_{t<T} \|u(\cdot, t)\|_{C^{1+s+\alpha}} \leq C_T \sup_{t<T} \| f(\cdot, t)\|_{C^{\alpha}},
\]
where the constant $C$ depends only on $\alpha, s,n,T$ and on the ellipticity constants of $A(\cdot)$.
\end{theorem}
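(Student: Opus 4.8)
The plan is to prove Theorem~\ref{thm:timedependent} by freezing the frequency via a Littlewood--Paley (Hardy--Littlewood) decomposition and using the explicit multiplier representation of Lemma~\ref{lem:constant}. Since $A(t)$ is constant in space, $L_{A(t)}$ is a Fourier multiplier with symbol $-a(\xi,t)|\xi|^{1+s}$, where $a$ is the $0$-homogeneous function of Lemma~\ref{lem:constant} and satisfies the symbol bounds \eqref{eq:a0}; in particular $L_{A(t)}$ commutes with every operator $\Delta_j$. Thus, setting $u_j:=\Delta_j u$ and $f_j:=\Delta_j f$, each $u_j$ solves $\pa_t u_j = L_{A(t)}[u_j] + f_j$ with $u_j(\cdot,0)=0$, and Duhamel's formula in the Fourier variable gives
\[
\widehat{u_j}(\xi,t)=\int_0^t e^{-\int_\tau^t a(\xi,\sigma)|\xi|^{1+s}\,d\sigma}\,\widehat{f_j}(\xi,\tau)\,d\tau .
\]
The first real step is then to introduce the two-parameter propagator $P_{t,\tau}$ defined by $\widehat{P_{t,\tau}g}(\xi)=e^{-\int_\tau^t a(\xi,\sigma)|\xi|^{1+s}\,d\sigma}\,\widehat g(\xi)$, so that $u_j(\cdot,t)=\int_0^t P_{t,\tau}f_j(\cdot,\tau)\,d\tau$, and to prove the frequency-localized decay estimate: there are $C\ge 1$ and $c>0$, depending only on $n,s,\alpha$ and the ellipticity constants, such that for every $j\ge 1$ and every $g$ with $\widehat g$ supported in $\{2^{j-1}\le|\xi|\le 2^{j+1}\}$,
\[
\|P_{t,\tau}g\|_{L^\infty(\R^n)}\le C\,e^{-c\,2^{j(1+s)}(t-\tau)}\,\|g\|_{L^\infty(\R^n)} \qquad \text{for } 0\le\tau\le t<T .
\]

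To obtain this estimate I would argue by dyadic rescaling. Fix a fattened bump $\tilde\delta\in C_0^\infty(\R^n)$ with $\tilde\delta\equiv 1$ on $\operatorname{supp}\delta$ and with $\operatorname{supp}\tilde\delta\subset\{c_1\le|\zeta|\le c_2\}$ a fixed annulus; then $P_{t,\tau}g=K^{(j)}_{t,\tau}*g$ with $K^{(j)}_{t,\tau}=\F^{-1}\big[e^{-\int_\tau^t a(\xi,\sigma)|\xi|^{1+s}\,d\sigma}\,\tilde\delta(2^{-j}\xi)\big]$, and by Young's inequality it suffices to bound $\|K^{(j)}_{t,\tau}\|_{L^1}$. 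Substituting $\xi=2^j\zeta$ and using that $a$ is $0$-homogeneous, $\|K^{(j)}_{t,\tau}\|_{L^1}=\|\F^{-1}[m_j]\|_{L^1}$ with $m_j(\zeta)=\exp\!\big(-2^{j(1+s)}|\zeta|^{1+s}\int_\tau^t a(\zeta,\sigma)\,d\sigma\big)\,\tilde\delta(\zeta)$, supported in the \emph{fixed} annulus $\{c_1\le|\zeta|\le c_2\}$. On that annulus the bounds \eqref{eq:a0} give $c(t-\tau)\le|\zeta|^{1+s}\int_\tau^t a(\zeta,\sigma)\,d\sigma\le C(t-\tau)$ together with uniform-in-$\sigma$ bounds on the $\zeta$-derivatives of the exponent; combining these with the Fa\`{a} di Bruno formula and the elementary inequality $r^ke^{-cr}\le C_ke^{-cr/2}$ applied with $r=2^{j(1+s)}(t-\tau)$ yields $\|\pa_\zeta^\beta m_j\|_{L^\infty}\le C_\beta\,e^{-c\,2^{j(1+s)}(t-\tau)}$ for $|\beta|\le n+1$. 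Since $m_j$ is smooth and supported in a set of bounded size, integration by parts gives $|\F^{-1}[m_j](x)|\le C_n(1+|x|)^{-(n+1)}\max_{|\beta|\le n+1}\|\pa_\zeta^\beta m_j\|_{L^\infty}$, and integrating in $x$ produces the claimed bound $\|K^{(j)}_{t,\tau}\|_{L^1}\le C\,e^{-c\,2^{j(1+s)}(t-\tau)}$.

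Granting the propagator estimate, the conclusion is routine. For $j\ge 1$ the Hardy--Littlewood characterization \eqref{eq:fourierholder} gives $\|f_j(\cdot,\tau)\|_{L^\infty}\le C\,2^{-j\alpha}\|f(\cdot,\tau)\|_{C^\alpha}$, hence
\[
\|\Delta_j u(\cdot,t)\|_{L^\infty}\le\int_0^t\|P_{t,\tau}f_j(\cdot,\tau)\|_{L^\infty}\,d\tau\le C\Big(\sup_{\tau<T}\|f(\cdot,\tau)\|_{C^\alpha}\Big)2^{-j\alpha}\int_0^\infty e^{-c\,2^{j(1+s)}r}\,dr\le C\,2^{-j(1+s+\alpha)}\sup_{\tau<T}\|f(\cdot,\tau)\|_{C^\alpha},
\]
so that $\sup_{j\ge 1}2^{j(1+s+\alpha)}\|\Delta_j u(\cdot,t)\|_{L^\infty}\le C\sup_{\tau<T}\|f(\cdot,\tau)\|_{C^\alpha}$. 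Since $1+s+\alpha$ is not an integer, \eqref{eq:fourierholder} then yields $\sup_{t<T}\|u(\cdot,t)\|_{C^{1+s+\alpha}(\R^n)}\le C_T\sup_{t<T}\|f(\cdot,t)\|_{C^\alpha(\R^n)}$; if one wishes to control the very low frequencies separately, the $L^\infty$ bound $\sup_{t<T}\|u(\cdot,t)\|_{C^0}\le C(1+T)\sup_{t<T}\|f(\cdot,t)\|_{C^0}$ obtained by the maximum-principle argument used for \eqref{eq:schauder-1} does the job.

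The step I expect to be the main obstacle is the frequency-localized propagator bound. It forces one to use both the $0$-homogeneity and the full family of derivative estimates \eqref{eq:a0} for $a(\xi,t)$ in order to run the Mikhlin/Bernstein-type argument after the dyadic rescaling, and one must check that the time-dependence of $A$ is harmless: only the time-integrated exponent $|\xi|^{1+s}\int_\tau^t a(\xi,\sigma)\,d\sigma$ enters, and on each dyadic annulus it retains, uniformly in $\tau$ and $t$, the lower bound $\sim 2^{j(1+s)}(t-\tau)$ and the smoothness needed to extract the exponential decay. The remaining ingredients --- the commutation of $L_{A(t)}$ with $\Delta_j$, the Duhamel representation, the geometric summation over $j$, and, if needed, the maximum principle --- are standard.
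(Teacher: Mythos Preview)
Your proposal is correct and follows essentially the same route as the paper: Fourier transform plus Duhamel, Littlewood--Paley localization, and the key $L^1$-bound on the frequency-localized propagator obtained from the derivative estimates \eqref{eq:a0} for the $0$-homogeneous symbol $a(\xi,t)$. The only cosmetic difference is that you rescale $\xi=2^j\zeta$ to a fixed annulus and invoke integration by parts (a Mikhlin/Bernstein argument) to get $\|K^{(j)}_{t,\tau}\|_{L^1}\le Ce^{-c\,2^{j(1+s)}(t-\tau)}$, whereas the paper works directly at scale $2^j$ and uses a weighted Cauchy--Schwarz/Plancherel argument to reach the equivalent bound $\int_0^t\|\F^{-1}(e^{-g(\tau,\xi)|\xi|^{1+s}}\delta(2^{-j}\xi))\|_{L^1}\,d\tau\le C\,2^{-j(1+s)}$; these are interchangeable implementations of the same idea.
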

\begin{proof}
Applying Fourier transform to  the equation \eqref{eq:parabolic} and using Lemma \ref{lem:constant} we obtain 
\[
\pa_t \F [u](\xi,t) = -a(\xi,t)|\xi|^{1+s} \F [u](\xi,t) + \F [f](\xi,t).
\]
Multiplying the above  by $e^{  |\xi|^{1+s}   \int_{0}^{t}a(\xi,\tau)\, d\tau}$ we
 have 
\[
\pa_t \big(\F [u](\xi,t) e^{ g(\xi,t)|\xi|^{1+s} } \big) =  \F [f](\xi,t)e^{|\xi|^{1+s}  g(\xi,t)}
\]
where we have set $g(\xi,t)=\int_{0}^t a(\xi,\tau)\, d\tau$.
Integrate over  $(0,t)$, recall that $u(x,0) = 0$, and get
\[
\F [u](\xi,t) e^{g(t,\xi)|\xi|^{1+s}}  = \int_0^t  \F [f](\xi,\tau)e^{g(\tau,\xi) |\xi|^{1+s}}\, d \tau.
\]
Thus we may write the solution  as
\[
\F [u](\xi,t) = \int_0^t  \F [f](\xi,\tau)e^{-(g(\xi,t)-g(\xi,\tau)) |\xi|^{1+s}}\, d \tau . 
\]

We need to estimate $\|\Delta_j(u(\cdot,t))\|_{L^\infty}  $. Since $\delta $ is a cutoff function it is enough to estimate 
\[
\|\mathcal{F}^{-1}\big(\F [u](\xi,t)\cdot \delta^2(2^{-j}\xi) \big)\|_{L^\infty} \leq  \int_0^t \|\mathcal{F}^{-1}\big( e^{-(g(\xi,t)-g(\xi,\tau)) |\xi|^{1+s}}  \F [f](\xi,\tau) \delta^2(2^{-j}\xi) \big)\|_{L^\infty}  \, d \tau.
\]
 Since the product becomes a convolution we have
\[
\begin{split}
\|\mathcal{F}^{-1}&\big( e^{-(g(\xi,t)-g(\xi,\tau)) |\xi|^{1+s}} \F [f](\xi,\tau) \delta^2(2^{-j}\xi) \big)\|_{L^\infty}\\
&= \|\mathcal{F}^{-1}\big( e^{-(g(\xi,t)-g(\xi,\tau)) |\xi|^{1+s}}\delta(2^{-j}\xi) \big) \, * \, \mathcal{F}^{-1} \big( \F [f](\xi,\tau) \delta(2^{-j}\xi) \big)\|_{L^\infty}\\
&\leq \|\mathcal{F}^{-1}\big( e^{-(g(\xi,t)-g(\xi,\tau)) |\xi|^{1+s}} \delta(2^{-j}\xi) \big)\|_{L^1}\, \|\mathcal{F}^{-1}\big(   \F [f](\xi,\tau) \delta(2^{-j}\xi) \big)\|_{L^\infty}.
\end{split}
\]
Since $f(\cdot,t) \in C^\alpha(\R^n)$  for every $\tau <t$ we have by \eqref{eq:fourierholder} 
\[
\|\mathcal{F}^{-1}\big(   \F [f](\xi,\tau) \delta(2^{-j}\xi) \big)\|_{L^\infty} \leq C 2^{-j\alpha}\sup_{\tau<t} \|f(\cdot,t)\|_{C^\alpha(\R^n)} .
\]
We need therefore to show
\beq 
\label{eq:claim}
 \int_{0}^t\|\mathcal{F}^{-1}\big( e^{-(g(\xi,t)-g(\xi,\tau)) |\xi|^{1+s}} \delta(2^{-j}\xi) \big)\|_{L^1} \, d\tau \leq C  2^{-j(1+s)} .
\eeq
Obviously it is enough to prove \eqref{eq:claim} for $j\geq 2$. Recalling the definition of $g(\xi,t)$ and  using \eqref{eq:a0} we have for $\tau <t$
\[
g(\xi,t)-g(\xi,\tau) \geq c_0(t-\tau) \quad \text{and} \quad |\partial^\beta  (g(\xi,t)-g(\xi,\tau))| \leq C_{|\beta|}\frac{t-\tau}{|\xi|^{|\beta|}}.
\]
Using these one may  prove that,  if $\gamma=(\gamma_1,\dots ,\gamma_n)$ is a multi index of length $n$, then  we have
\[
\begin{split}
\big|\pa_{\xi}^\gamma e^{-(g(\xi,t)-g(\xi,\tau)) |\xi|^{1+s}} \big| &\leq C \sum_{k\leq n} (1+ (t- \tau)|\xi|^{1+s})^{k}|\xi|^{-n} e^{-(g(\xi,t)-g(\xi,\tau)) |\xi|^{1+s}}\\
&\leq C (1+ (t- \tau)|\xi|^{1+s})^{n}|\xi|^{-n}  e^{-c_0(t-\tau) |\xi|^{1+s}},
\end{split}
\]
where the constant $C$ depends on the index $n$. Therefore we have by the above and by $\delta(2^{-j}\xi) = 0$ for $\xi \notin B_{2^{j+1}}\setminus B_{2^{j-1}}$ that
\[
|\pa_\xi^\gamma (e^{-(g(\xi,t)-g(\xi,\tau)) |\xi|^{1+s}} \delta(2^{-j}\xi  ))| \leq C (1+(t- \tau) |\xi|^{1+s})^{n}|\xi|^{-n}  e^{-c_0(t-\tau) |\xi|^{1+s}} \tilde \delta(2^{-j}\xi  ),
\]
where $ \tilde \delta$ is a smooth function such that $\tilde \delta(\xi  ) = 0$ for  $\xi \notin B_{2^{j+1}}\setminus B_{2^{j-1}}$.  Using Cauchy-Schwarz  inequality, Plancherel's theorem  and recalling that $\delta(2^{-j}\xi) = 0$ for $\xi \notin B_{2^{j+1}}\setminus B_{2^{j-1}}$ have that
\begin{align*}
     \|\mathcal{F}^{-1}&\big(e^{-(g(\xi,t)-g(\xi,\tau)) |\xi|^{1+s}} \delta(2^{-j}\xi) \big)\|_{L^1}\\
&\leq \left( \int_{\R^n} \frac{1}{(1+|2^jx|^n)^2}dx \right)^{\tfrac12} \left( \int_{\R^n} (1+|2^j x|^n)^2|\mathcal{F}^{-1}\big( e^{-(g(\xi,t)-g(\xi,\tau)) |\xi|^{1+s}} \delta(2^{-j}\xi) \big)|^2 \, d x \right)^{\tfrac12}\\
    &\leq C2^{-\frac{jn}{2}} \left( \sum_{|\gamma|\leq n} \int_{\R^n}\big|2^{jn}\pa_\xi^\gamma(e^{-(g(\xi,t)-g(\xi,\tau)) |\xi|^{1+s}} \delta(2^{-j}\xi )) \big|^2 d\xi \right)^{\tfrac12}\\
& \leq C2^{-\frac{jn}{2}} \left( \int_{B_{2^{j+1}}\setminus B_{2^{j-1}}} |2^{jn} (1+ (t- \tau)|\xi|^{1+s})^{n}|\xi|^{-n} e^{-c_0(t-\tau)  |\xi|^{1+s}}|^2 dx\right)^{\tfrac12}
     \\
    & \leq C (1+(t-\tau) 2^{(j+1)(1+s)})^n e^{-c_0(t-\tau)2^{(j-1)(1+s)}},
\end{align*}
where in the last inequality we used that $\xi \in B_{2^{j+1}}\setminus B_{2^{j-1}}$ and the assumption $j \geq 2$.
Finally we have
\begin{align*}
    \int_0^t \|\mathcal{F}^{-1}\big( e^{-(g(\xi,t)-g(\xi,\tau)) |\xi|^{1+s}}  &\delta(2^{-j}\xi) \big)\|_{L^1}  \, d \tau \leq C \int_0^t (1+  (t-\tau) 2^{(j+1)(1+s)})^n e^{-c_0 (t-\tau) 2^{(j-1)(1+s)}}d\tau
    \\ &\leq C 2^{-(j-1)(1+s)} \int_0^\infty (1+ 2^{2(1+s)}\mu)^n e^{-c_0 \mu} \, d \mu \leq C 2^{-j(1+s)},
\end{align*}
and  we obtain \eqref{eq:claim}.
\end{proof}

We are now ready prove  Theorem \ref{teo:parabolic}. The proof follows from small perturbation argument, where we localize the equation and freeze the coefficients. The argument is similar to the proof
of Theorem \ref{thm2:schauder}, with the difference that the Euclidean space is not compact. We may overcome this by using  Remark \ref{rem:lem2.5-eucl} instead of Lemma  \ref{lem:old.4.3}.  
\begin{proof}[\textbf{Proof of Theorem \ref{teo:parabolic}}]

We begin by noticing that applying the maximum principle, as in the proof of Theorem \ref{thm2:schauder}, we have 
\beq \label{eq:schauder-app}
\sup_{t <T}\|u(\cdot, t)\|_{C^0(\R^n)} \leq C(1+T) \sup_{t <T}\|f (\cdot, t)\|_{C^0(\R^n)}.
\eeq

For the H\"older continuity we  localize the equation. First, fix  $\delta  \in (0,1)$ and choose  $t_0 \in (0,T)$  and  $x_0 \in \R^n$ such that 
\beq \label{eq:choice-x-0}
\sup_{t <T} \sup_{\substack{y\neq x \in \R^n \\|y-x|<\delta/2}}\frac{|\nabla u(y,t) -\nabla u(x,t)|}{|y-x|^{s+\alpha}} \leq 2 \sup_{y \in  B_{\delta/2}(x_0)} \frac{|\nabla u(y,t_0) -\nabla u(x_0,t_0)|}{|y-x_0|^{s+\alpha}}
\eeq
  Let $\eta :\R \to [0,1]$ be a smooth cutoff function such that $\eta(r) =1$ for $|r|\leq \frac12$, $\eta(r) =0 $ for $|r|\geq 1$, denote
\[
\eta_{x_0}(x) = \eta \left(\frac{|x-x_0|}{\delta} \right)
\]
 and let $v= u(x,t)\eta_{x_0}(x)$.  Notice that 
\[
 L_A[v] =\eta_{x_0}  L_A[u]  +\int_{\R^n} u(y,t) \frac{\eta_{x_0}(y)-\eta_{x_0}(x)}{\|y-x\|_{A(x,t)}^{n+1+s}}  \, dy.
\] 
If $u$ is a solution of the equation $\pa_t u= L_A[u] + f(x,t)$, then $v= u\eta_{x_0}$ solves 
\[
\pa_t v= L_{A_{x_0}}[v]  + \tilde f(x,t),
\]
where $A_{x_0} = A(x_0, t)$ and 
\[\begin{split}
\tilde f= \eta_{x_0} f  + \overbrace{(L_{A}-L_{A_{x_0}} )[v]}^{= I_1}-\int_{\R^n} u(y,t) \frac{\eta_{x_0}(y)-\eta_{x_0}(x)}{\|y-x\|_{A(x,t)}^{n+1+s}}  \, dy
= \eta_{x_0} f+I_1- I_2.
\end{split}
\]
 Theorem \ref{thm:timedependent} implies 
\[
\sup_{(0,T)}\|v(\cdot, t)\|_{C^{1+s+\alpha}(\R^n)} \leq C \sup_{(0,T) }\|\tilde f(\cdot, t)\|_{C^{\alpha}(\R^n)}.
\]
We claim that it holds 
\beq \label{eq:claim1}
\sup_{(0,T)}( \| I_1\|_{C^{\alpha}(\R^n)} + \| I_2\|_{C^{\alpha}(\R^n)} ) \leq  C\delta^{\alpha} \sup_{(0,T)}\| v(\cdot, t)\|_{C^{1+s+\alpha}(\R^n)} +  C_\delta\sup_{(0,T)} \|u(\cdot, t)\|_{C^{1+s+\alpha/2}(\R^n)}.
\eeq

Since in the argument to prove \eqref{eq:claim1} the time does not play any role, we  drop it in order to simplify the notation. For $\mu \in [0,1]$ we   denote $A_\mu(x) = (1-\mu)A(x_0) + \mu A(x)$ and write $I_1$ as
\[
\begin{split}
&(L_{A} - L_{A_{x_0}})[v](x) =\int_{\R^n} (v(y)-v(x)) \int_0^1 \frac{d}{d\mu}\left( \frac{1}{\la A_\mu(x) (y-x),(y-x)\ra} \right)^{\frac{n+1+s}{2}} \, d \mu \, dy\\
&=-\tfrac{n+ 1+s}{2}\int_{\R^n}  \int_0^1 \frac{(v(y)-v(x))}{\la A_\mu(x) (y-x),(y-x)\ra^{\frac{n+1+s}{2}}}\left( \frac{\la (A(x) -A(x_0))(y-x),(y-x)\ra}{\la A_\mu(x) (y-x),(y-x)\ra} \right) \, d \mu \, dy\\
&= -\tfrac{n+1+s}{2} \int_{\R^n}  \int_0^1 \frac{(v(y)-v(x) - \eta(|y-x|) \la \nabla v(x), y-x \ra)}{\la A_\mu(x) (y-x),(y-x)\ra^{\frac{n+1+s}{2}}}\left( \frac{\la (A(x) -A(x_0))(y-x),(y-x)\ra}{\la A_\mu(x) (y-x),(y-x)\ra} \right) \, d \mu \, dy,
\end{split}
\]
where the last equality follows from symmetry. Let us define $F: \R^n \times \R^n \to \R$ as
\[
F(y,x) = (v(y)-v(x)- \eta(|y-x|) \la \nabla v(x), y-x \ra) \left( \frac{\la (A(x) -A(x_0))(y-x),(y-x)\ra}{\la A_\mu(x) (y-x),(y-x)\ra} \right),
\]
for $y \neq x$ and $F(x,x) = 0$. Then one may check that $F$ satisfies the conditions in Remark \ref{rem:lem2.5-eucl} with a constant 
\[
\kappa_0 = C\delta^\alpha \|v\|_{C^{1+s+\alpha}} + C_\delta  \|v\|_{C^{1+s+\alpha/2}}.
\]
We leave the details for the reader as it follows from \eqref{eq:g_h-3} and using an argument similar to the one in the proof of Lemma \ref{lem:bound-G-error} . We may thus use Remark \ref{rem:lem2.5-eucl} and obtain
\[
\begin{split}
\| I_1(\cdot, t)\|_{C^{\alpha}(\R^n)} &\leq  C \delta^\alpha \|v(\cdot, t)\|_{C^{1+s+\alpha}(\R^n)} + C_\delta  \|v(\cdot, t)\|_{C^{1+s+\alpha/2}(\R^n)}\\
&\leq   C \delta^\alpha \|v(\cdot, t)\|_{C^{1+s+\alpha}(\R^n)} + C_\delta  \|u(\cdot, t)\|_{C^{1+s+\alpha/2}(\R^n)}.
\end{split}
\]
The argument to bound  the term $I_2$ is similar. We write it as
\[
\begin{split}
I_2 =&\int_{\R^n} u(y) \frac{ (\eta_{x_0}(y) -\eta_{x_0}(x))}{\|y-x\|_{A(x)}^{n+1+s}}\, dy =	\int_{\R^n}\frac{ ( u(y)- u(x)) (\eta_{x_0}(y) -\eta_{x_0}(x))}{\|y-x\|_{A(x)}^{n+1+s}}\, dy \\
&+ u(x) \int_{\R^n}\frac{\eta_{x_0}(y) -\eta_{x_0}(x)- \eta(|y-x|) \la \nabla \eta_{x_0}(x), y-x \ra}{\|y-x\|_{A(x)}^{n+1+s}}\, dy.
\end{split}
\]
We apply Remark \ref{rem:lem2.5-eucl}, by  first choosing 
\[
F_1(y,x) = (u(y)-u(x)) (\eta_{x_0}(y) -\eta_{x_0}(x))
\] 
and then 
\[
F_2(y,x) = \eta_{x_0}(y) -\eta_{x_0}(x)- \eta(|y-x|) \la \nabla \eta_{x_0}(x), y-x \ra
\] 
  to infer
\[
\| I_2(\cdot, t)\|_{C^{\alpha}(\R^n)} \leq  C_\delta  \|u(\cdot, t)\|_{C^{1}(\R^n)}.
\]
Hence we have \eqref{eq:claim1}.

Let us finally show how the claim follows from \eqref{eq:claim1}. First, by choosing $\delta$ small we have 
\[
\sup_{t<T}\|v(\cdot, t)\|_{C^{1+s+\alpha}(\R^n)} \leq C_\delta	 \sup_{t<T}(\|u(\cdot, t)\|_{C^{1+ s+\alpha/2}(\R^n)} + \|f(\cdot, t)\|_{C^\alpha(\R^n)}).
\]
By using $\|u(\cdot, t)\|_{C^{1+s+\alpha}(B_{\delta/2}(x_0))} \leq \|v(\cdot, t)\|_{C^{1+s+\alpha}(\R^n)} $ and by the choice of $x_0$ and $t_0$ in \eqref{eq:choice-x-0} we deduce
\[
\sup_{t<T}\|u(\cdot, t)\|_{C^{1+s+\alpha}(\R^n)} \leq C_\delta \sup_{t<T}(\|u(\cdot, t)\|_{C^{1+ s+\alpha/2}(\R^n)} + \|f(\cdot, t)\|_{C^\alpha(\R^n)}).
\]
By interpolation \eqref{def:interpolation-eucl} we have
\[
\begin{split}
\|u(\cdot, t)\|_{C^{1+ s+\alpha/2}(\R^n)} &\leq C\|u(\cdot, t)\|_{C^{1+ s+\alpha}(\R^n)}^\theta\|u(\cdot, t)\|_{C^{0}(\R^n)}^{1-\theta}\\
&\leq \eps \|u(\cdot, t)\|_{C^{1+ s+\alpha}(\R^n)} + C_\eps\|u(\cdot, t)\|_{C^{0}(\R^n)}.
\end{split}
\]
By choosing $\eps$ small we then have 
\[
\sup_{t<T}\|u(\cdot, t)\|_{C^{1+s+\alpha}(\R^n)} \leq C \sup_{t<T}( \|f(\cdot, t)\|_{C^\alpha(\R^n)}+ \|u(\cdot, t)\|_{C^0(\R^n)}).
\]
The claim then follows from  \eqref{eq:schauder-app}.
\end{proof}

\vspace{4pt}
\noindent

\section*{Acknowledgments}
\noindent
The first author was supported by the Academy of Finland grant 347550.
The second author is a member of GNAMPA.
\vspace{4pt}


\begin{thebibliography}{50} 



\bibitem{BFV} 
\textsc{B. Barrios, A. Figalli \& E. Valdinoci}, 
\emph{Bootstrap regularity for integro-differential operators and its application to nonlocal minimal surfaces.} 
Ann. Sc. Norm. Super. Pisa Cl. Sci. (5) \textbf{13} (2014), 609--639. 


\bibitem{CFSW} 
\textsc{X. Cabr\'e, M.M. Fall, J. Sol\'a-Morales \& T. Weth,} 
\emph{Curves and surfaces with constant nonlocal mean curvature: meeting Alexandrov and Delaunay.} J. Reine Angew. Math. \textbf{745} (2018), 253--280. 



\bibitem{CRS} 
\textsc{L. Caffarelli, J.-M. Roquejoffre \& O. Savin}, 
\emph{Nonlocal minimal surfaces.} Comm. Pure Appl. Math. \textbf{63} (2010), 1111--1144. 


\bibitem{CDNV} 
\textsc{A. Cesaroni, S. Dipierro, M. Novaga \& E. Valdinoci}, 
\emph{Fattening and nonfattening phenomena for planar nonlocal curvature flows}. Math. Ann. \textbf{375} (2019),  687--736.


\bibitem{CN0}
\textsc{ A. Cesaroni \& M. Novaga},
  \emph{Fractional mean curvature flow of Lipschitz graphs.}
  Manuscripta Mathematica \textbf{170} 
  (2021),
 427--451.

\bibitem{CN}
\textsc{ A. Cesaroni \& M. Novaga},
\emph{Stability of the ball under volume preserving fractional mean curvature flow}, Advances in Calculus of Variations, (2022)



\bibitem{CMP} 
\textsc{A. Chambolle, M. Morini \& M. Ponsiglione}, 
\emph{Nonlocal curvature flows.} Arch. Ration. Mech. Anal. \textbf{218} (2015), 1263--1329. 


\bibitem{CNR}
\textsc{A. Chambolle, M. Novaga, \& B. Ruffini},
\emph{Some results on anisotropic fractional mean curvature flows.} Interfaces Free Bound. \textbf{19}(2017), 393–415.

\bibitem{CSV1} 
\textsc{E. Cinti, C. Sinestrari \& E. Valdinoci,} 
\emph{Neckpinch singularities in fractional mean curvature flows}. 
Proc. Amer. Math. Soc. \textbf{146} (2018), 2637--2646. 

\bibitem{CSV2} 
\textsc{E. Cinti, C. Sinestrari \& E. Valdinoci,} 
\emph{Convex sets evolving by volume preserving fractional mean curvature flows.} Analysis and PDE, \textbf{13} (2020), 2149-2171.



\bibitem{CFMN} 
\textsc{G. Ciraolo, A. Figalli, F. Maggi \& M. Novaga}, 
\emph{Rigidity and sharp stability estimates for hypersurfaces with constant and almost-constant nonlocal mean curvature.} J. Reine Angew. Math. \textbf{741} (2018), 275--294. 

\bibitem{DPW} 
\textsc{J. D\'avila, M. del Pino \& J.  Wei}, 
\emph{ Nonlocal s-minimal surfaces and Lawson cones}.  J. Differential Geom. \textbf{109} (2018),  111--175. 

%



\bibitem{EH}
\textsc{ K. Ecker \& G. Huisken}, 
\emph{Interior estimates for hypersurfaces moving by mean curvature.} Invent. Math. \textbf{105} (1991),  547--569.
	
\bibitem{FR}
\textsc{X. Fernández-Real, \& X. Ros-Oton} \emph{ Regularity Theory for Elliptic PDE.} Zurich Lectures in Advanced Mathematics. EMS books, (2022).


\bibitem{FFMMM} 
\textsc{A. Figalli, N. Fusco, F. Maggi, V. Millot \& M. Morini}, 
\emph{Isoperimetry and stability properties of balls with respect to nonlocal energies.} 
Comm. Math. Phys. \textbf{336} (2015), 441--507. 

\bibitem{FMM} 
\textsc{N. Fusco, V. Millot \& M. Morini}
\emph{A quantitative isoperimetric inequality for fractional perimeters}, J. Funct. Anal.,
\textbf{261} (2011), 697--715



\bibitem{Hui2} 
\textsc{G. Huisken}, 
\emph{The volume preserving mean curvature flow}. J. Reine Angew. Math. \textbf{382} (1987), 35--48.
%


\bibitem{imb} 
\textsc{C. Imbert}, 
\emph{Level set approach for fractional mean curvature flows.} Interfaces Free Bound. \textbf{11} (2009), 153--176. 


\bibitem{JL} 
\textsc{V. Julin \& D.A. La Manna}, 
\emph{Short time existence of the classical solution to the fractional mean curvature flow.} Ann. Inst. H. Poincar\'e Anal. Non Lin\'eaire \textbf{37} (2020),  983--1016. 

\bibitem{JL2}
\textsc{V. Julin \& D. A. La Manna},
\emph{A priori estimates for the motion of charged liquid drop:  A dynamic approach via free boundary Euler equations.} J. Math. Fluid Mech. \textbf{26} (2024), Paper No. 48.
%
\bibitem{MantegazzaBook} 
\textsc{C. Mantegazza}, 
\emph{Lecture notes on Mean Curvature Flow.} Progress in Mathematics {\bf 290}. Birkh\"auser/ Springer, Basel 2011. 
 


\bibitem{MW} 
\textsc{B-V. Matioc \& C. Walker}
\emph{The non-local mean curvature flow of periodic graphs.}  Preprint 2022. 

\bibitem{MP} 
\textsc{R. Mikulevicius \& H. Pragarauskas}, 
\emph{On H\"older solutions of the integro-differential Zakai equation}. Stoch. Process. Appl. \textbf{119} (2009), 3319--3355.




\bibitem{SaeV}
\textsc{M. S\'aez \& E.  Valdinoci}, 
\emph{On the evolution by fractional mean curvature}. Comm. Anal. Geom. \textbf{27} (2019).


\bibitem{SV}
\textsc{O. Savin \& E.  Valdinoci}, 
\emph{Regularity of nonlocal minimal cones in dimension 2}. Calc. Var. Partial Differential Equations \textbf{48} (2013),  33--39.

\bibitem{Ser}
\textsc{J. Serra}, 
\emph{Regularity for fully nonlinear nonlocal parabolic equations with rough kernels.} Calc. Var.
Partial Diﬀerential Equations \textbf{54} (2015),  615--629.

\bibitem{Tri} 
\textsc{H. Triebel}, 
\emph{Interpolation theory, Function spaces, Differential operators.} North-Holland Publishing Company, 1978




\end{thebibliography}
\end{document}